\documentclass{article}
\usepackage{graphicx} 
\usepackage[english]{babel}
\usepackage[letterpaper,top=2cm,bottom=2cm,left=3cm,right=3cm,marginparwidth=1.75cm]{geometry}
\usepackage{caption}
\usepackage{amsmath} 
\usepackage{amsthm} 
\usepackage{mathtools} 
\usepackage[thinc]{esdiff} 
\usepackage{amssymb} 
\usepackage{physics} 
\usepackage{mathrsfs} 
\usepackage{subcaption}
\usepackage{wrapfig}
\mathtoolsset{showonlyrefs} 
\usepackage[section]{placeins} 
\usepackage[numbers, sort&compress]{natbib} 
\usepackage{enumitem}
\usepackage{hyperref}

\usepackage{tikz}
\usetikzlibrary{arrows.meta,positioning,backgrounds,calc,fit,shapes.misc,shapes.geometric}

\definecolor{Xcol}{HTML}{693CAA}      
\definecolor{Ycol}{HTML}{0F7FDB }      
\definecolor{robust}{HTML}{693CAA}    
\definecolor{distrib}{HTML}{0F7FDB}   
\definecolor{struct}{HTML}{DB6B0F}    
\definecolor{panelfill}{HTML}{F6F6F8}
\definecolor{centfill}{HTML}{FCF3E9} 

\definecolor{jointcol}{RGB}{120,120,130}
\definecolor{condYX}{HTML}{00C3EA}    
\definecolor{condXY}{HTML}{D752DA}    

\newcommand{\R}{\mathbb{R}} 
\newcommand{\C}{\mathcal{C}} 
\newcommand{\Okc}{\Omega_{K, c, \varepsilon}}

\newcommand{\Ocd}{\Omega_{c, \varepsilon, \delta}}
\newcommand{\D}{\Delta_{G,\ell, c}}
\newcommand{\phifix}{\phi_{\lambda, G}}

\newtheorem{theorem}{Theorem}[section]
\newtheorem{corollary}{Corollary}[theorem]
\newtheorem{lemma}[theorem]{Lemma}

\newtheorem{proposition}{Proposition}[section]
\theoremstyle{definition}
\newtheorem{definition}{Definition}[section]
\newtheoremstyle{remarksty}
  {}{}{\normalfont}{}{\bfseries}{.}{ }{}
\theoremstyle{remarksty}
\newtheorem{remark}{Remark}[section]
\newtheorem{example}{Example}[section]
\newenvironment{remark*}
  {\pushQED{\hfill$\lozenge$}\begin{remark}}
  {\popQED\end{remark}}
\newenvironment{example*}
  {\pushQED{\hfill$\lozenge$}\begin{example}}
  {\popQED\end{example}}
\newcommand{\email}[1]{\texttt{#1}}

\title{A Distributionally Robust Framework for Learned Reconstructions in Inverse Problems}
\author{Floor van Maarschalkerwaart\thanks{University of Twente, Enschede, the Netherlands 
(\email{\{f.vanmaarschalkerwaart, c.brune, m.c.carioni\}@utwente.nl}.}
\and Subhadip Mukherjee\thanks{IIT Kharagpur, Kharagpur, India 
  (\email{smukherjee@ece.iitkgp.ac.in}).} 
\and Christoph Brune\footnotemark[1] 
\and Marcello Carioni\footnotemark[1]}
\date{\today}

\begin{document}

\maketitle
\begin{abstract}
Learned reconstruction operators for inverse problems are typically trained under a fixed noise model, and generalize poorly when the distribution during testing differs from the one assumed during training. Distributionally robust optimization (DRO) addresses this by optimizing against the worst-case distribution within a prescribed ambiguity set, but standard Wasserstein DRO perturbs the full joint distribution uniformly, which can be overly conservative and ignores the physics of the measurement process. We develop a structured DRO framework in which the ambiguity set is restricted to structured perturbations aligned with the data-acquisition process. This allows us to learn data-driven reconstruction operators that remain robust to distributional shifts. By constraining perturbations to subsets such as $P(Y|X)$, our framework models uncertainty in the forward operator and noise model more faithfully, accommodating any noise model expressible as a stochastic forward operator. We establish strong duality for this general formulation and derive explicit finite-dimensional dual representations for perturbations in the joint, marginal, and conditional distributions. A central result is an explicit worst-case risk bound that induces Tikhonov regularization on the Lipschitz constant of the reconstruction operator, and is less conservative relative to standard DRO for well-posed problems. Numerical experiments on deblurring and sinogram-to-CT reconstruction demonstrate improved robustness, stability, and interpretability over standard DRO and MSE baselines. In the linear setting, the learned operator becomes effectively low-rank, truncating at the intrinsic dimension of the data and recovering a data-driven analogue of truncated-SVD regularization.
\paragraph{Keywords.} inverse problems, distributionally robust optimization, optimal transport, duality, data-driven methods
\end{abstract}

\section{Introduction}
Inverse problems aim to recover an unknown signal from indirect, noisy measurements. They are central to imaging, from deblurring to computed tomography, and are typically ill-posed. Learned reconstructions have become a popular tool, but they are usually trained under a fixed assumption on the noise model. More broadly, in many estimation and reconstruction problems, the data-generating distribution is often only partially known, and solutions optimized for an assumed model may be unstable when the true distribution differs. Distributionally robust optimization (DRO) addresses this by seeking solutions that perform well against all probability measures in a prescribed ambiguity set around the empirical distribution. Given an estimated in-sample distribution $\mu^*$, one learns a decision rule or reconstruction operator $G\in\mathcal{G}$ by solving the min–max problem:
\begin{align}\label{eq:DRO}
    \min_{G \in \mathcal{G}} \max_{\mu \in B_{D,\varepsilon} (\mu^*)}
    \int_S \ell(s,G)\,\mathrm{d}\mu(s),
\end{align}
where $\ell: S \times \mathcal{G} \to [0, \infty)$ is a measurable loss function and $B_{D,\varepsilon}(\mu^*)$ is the $\varepsilon$-ball of perturbations around $\mu^* \in P(S)$ with respect to a discrepancy $D: P(S) \times P(S) \to \R$. By optimizing against the worst case within the ambiguity set rather than the average (the observed data distribution), DRO provides robustness against noise and model misspecification and offers generalization to out-of-distribution data. 

A common choice for the ambiguity set is a Wasserstein ball. The Wasserstein distance $W: P(S)\times P(S)\to[0,\infty)$ \cite{villani_optimal_2009}, defined through optimal transport, captures discrepancies in both the support and mass of probability measures and carries an intuitive geometric interpretation as the minimal transport cost between two distributions. Unlike $\phi$-divergences such as the KL divergence, it is well-defined for any pair of measures and does not require overlapping support, making it suitable when data lie on low-dimensional manifolds. Moreover, it captures the geometry and relative positions of the supports of the distributions, which is not the case for other measures such as integral probability metrics (IPMs). Formally, given a cost $c: S \times S \to [0, \infty]$ the Wasserstein distance is defined as
\begin{align}\label{eq:w-dist}
    W(\mu_1, \mu_2) := \inf_{\pi \in \Pi(\mu_1, \mu_2)}
    \int_{S \times S} c(s, r)\,\mathrm{d}\pi(s,r),
\end{align}
where $\Pi(\mu_1, \mu_2)$ denotes the set of transport plans with marginals $\mu_1$ and $\mu_2$. The entropically regularized version known as the Sinkhorn distance \cite{peyre_computational_2019, cuturi_sinkhorn_2013} enables efficient computation and is used in our numerical experiments. 

We adopt a Bayesian interpretation of the data-measurement process: given an input $x \in X$, the observation $y$ is a $Y$-valued random variable distributed according to a conditional law $\mu_{Y|X=x}: X \to P(Y)$ whose mean is given by a forward operator $H: X \to Y$,
\begin{align}\label{eq:forward-operator}
    y \sim \mu_{Y|X=x}, \qquad \mathbb{E}[\,y | x\,] = Hx.
\end{align}
The conditional law $\mu_{Y|X=x}$ encodes the noise model: the additive formulation $y = Hx + \eta$ with zero-mean noise $\eta$ corresponds to $\mu_{Y|X=x} = \mathrm{Law}(Hx + \eta)$, while Poisson and other noise models arise from other choices of $\mu_{Y|X=x}$ (see Example~\ref{ex:noise}). Together with a prior $\mu_X \in P(X)$, the conditional law induces a joint distribution $\mu \in P(X\times Y)$.

\begin{figure}[tp]
  \centering
  \begin{tikzpicture}[font=\small, >=Stealth]
    \def\W{9} \def\H{6}
    \begin{scope}
      \clip (0,0) rectangle (\W,\H);
      \fill[jointcol!18, rotate around={26.57:(4.5,3)}] (4.5,3) ellipse [x radius=3.0, y radius=0.80];
      \draw[jointcol!55, rotate around={26.57:(4.5,3)}] (4.5,3) ellipse [x radius=3.0, y radius=0.80];
      \draw[jointcol!70, rotate around={26.57:(4.5,3)}] (4.5,3) ellipse [x radius=1.9, y radius=0.50];
    \end{scope}
    \draw[black, thick] (1.82,1.66) -- (7.18,4.34);
    \node[font=\footnotesize, rotate=26.57, anchor=south] at (2.55,2.07) {$y=Hx$};
    \draw[thick] (0,0) rectangle (\W,\H);
    \draw[gray!45, thin] (0.7,-0.30) -- (8.3,-0.30);
    \draw[gray!45, thin] (-0.30,0.4) -- (-0.30,5.6);
    \draw[Xcol, thick] plot[domain=0.7:8.3, samples=90, smooth, variable=\x]
          (\x, {-0.30 - 0.85*exp(-(\x-4.5)^2/2.0)});
    \node[Xcol] at (8.35,-1.0) {$\mu_X$};
    \draw[Ycol, thick] plot[domain=0.4:5.6, samples=80, smooth, variable=\y]
          ({-0.30 - 0.85*exp(-(\y-3)^2/1.3)}, \y);
    \node[Ycol] at (-1.0,5.55) {$\mu_Y$};
    \def\xz{3.0}
    \draw[condYX!55, dashed] (\xz,0) -- (\xz,\H);
    \draw[condYX, thick] plot[domain=0.8:3.7, samples=60, smooth, variable=\y]
          ({\xz + 0.55*exp(-(\y-2.25)^2/0.42)}, \y);
    \draw[<->, condYX, line width=1.4pt] (\xz,1.45) -- (\xz,3.05);
    \node[condYX, anchor=north, align=center, font=\footnotesize] at (\xz,0.85)
          {perturb $\mu_{Y| X}$ ($\mu_X$ fixed)};
    \def\yz{3.7}
    \draw[condXY!45, dashed] (0,\yz) -- (\W,\yz);
    \draw[condXY, thick] plot[domain=4.4:7.4, samples=60, smooth, variable=\x]
          (\x, {\yz + 0.45*exp(-(\x-5.9)^2/0.42)});
    \draw[<->, condXY, line width=1.1pt] (5.0,\yz) -- (6.8,\yz);
    \node[condXY, anchor=south, align=center, font=\footnotesize] at (5.9,\yz+0.45)
          {perturb $\mu_{X|Y}$ ($\mu_Y$ fixed)};
    \node[anchor=north, font=\footnotesize] at (4.5,-1.55) {$X$ \;(input / parameter)};
    \node[anchor=south, rotate=90, font=\footnotesize] at (-1.55,3) {$Y$ \;(measurement)};
  \end{tikzpicture}
  \caption{Structured perturbations in the product space $X\times Y$. Fixing the $X$-marginal confines the $Y| X$ adversary to vertical transport; fixing the $Y$-marginal gives horizontal $X| Y$ transport. In contrast, standard DRO perturbs the full joint distribution (gray oval).}
  \label{fig:product-space}
\end{figure}

The task of recovering $x$ from noisy observations is often ill-posed: the operator $H$ may be ill-conditioned or non-injective, and small data perturbations can produce large reconstruction errors. Standard approaches address this through variational regularization or Bayesian formulations. The former minimizes $L(x,y) + R(x)$ where $L: X\times Y\to\R$ is a data fidelity term and $R: X\to[0,\infty]$ encodes prior knowledge, with Tikhonov regularization $R(x)=\|Tx\|_2^2$ as a canonical example. Bayesian formulation targets the posterior of $x$ given $y$ \cite{dashti_bayesian_2017}. While classical regularization mitigates noise amplification, it does not distinguish between different sources of uncertainty. Likewise, standard Wasserstein DRO treats all perturbations uniformly by perturbing the full joint distribution $\mu \in P(X\times Y)$, which may be unnecessarily conservative and may not reflect the physics of the measurement process. From a learning perspective, the goal is not merely to reconstruct a single signal $x$, but to learn a reconstruction operator $G:Y\to X$ that generalizes across measurements and remains stable under uncertainty in the data-generating process. 

We build \emph{Structured DRO}, a novel distributionally robust optimization framework that allows for structured perturbations within a given subset $K \subseteq P(X\times Y)$, recovering perturbations of the marginal, conditional, and joint distributions as special cases. This is precisely where DRO and inverse problems meet, c.f. Figure~\ref{fig:overview}. The data-generating law factorizes as $\mu = \mu_X \otimes \mu_{Y|X=x}$ into a prior over inputs and a conditional measurement likelihood, representing physically distinct sources of uncertainty: the input distribution on one hand, and the forward operator together with measurement noise on the other. Choosing $K$ in accordance with this factorization lets the adversary perturb each component separately rather than treating the joint as a whole. In particular, fixing $\mu_X$ and perturbing only $\mu_{Y|X=x}$ isolates uncertainty in the measurement model $P(Y| X)$, naturally aligning the framework with the physics of the inverse problem, as illustrated in Figure~\ref{fig:product-space}. The central object is a learned reconstruction operator $G \in \mathcal{G}$ (linear or a neural network) trained to minimize worst-case reconstruction risk over all admissible perturbations. Our earlier work \cite{van_maarschalkerwaart_perturbation-aware_2025} is a special case, corresponding to a $K$ that fixes $\mu_X$ and restricts $\mu_{Y|X=x}$ to Gaussian models with mean $Hx$ and unknown variance. 

\begin{figure}[tp]
  \centering
  \resizebox{0.98\textwidth}{!}{%
    \begin{tikzpicture}[
        font=\small,
        >=Stealth,
        title/.style={font=\bfseries\normalsize, align=center},
        obj/.style={align=center},
        key/.style={align=center, inner sep=1pt},
        chip/.style={rounded corners=2pt, inner sep=2.5pt, font=\footnotesize, align=center},
      ]
      \begin{scope}[on background layer]
        \fill[panelfill, rounded corners=6pt] (0,1.15) rectangle (5.0,8.9);
        \fill[centfill, draw=struct, very thick, rounded corners=6pt] (5.45,1.15) rectangle (10.55,9.15);
        \fill[panelfill, rounded corners=6pt] (11.0,1.15) rectangle (16.0,8.9);
      \end{scope}
      \node[title] at (2.5,8.45) {Distributionally Robust\\Optimization};
      \node[obj] at (2.5,7.35) {$\displaystyle \inf_{\textcolor{robust}{G}}\;\sup_{\mu\in B_{\varepsilon}(\mu^*)}\int \ell(s,\textcolor{robust}{G})\,\mathrm{d}\mu$};
      \coordinate (bc) at (2.5,5.25);
      \draw[distrib!75, thick] (bc) circle (1.2);
      \fill[black] (bc) circle (1.6pt);
      \node[below=1pt, font=\footnotesize] at (bc) {$\mu^*$};
      \draw[->, robust, thick] (bc) -- ($(bc)+(0.93,0.76)$);
      \draw[distrib!75, dashed] (bc) -- ($(bc)+(-1.2,0)$) node[midway,below,font=\scriptsize]{$\varepsilon$};
      \node[font=\footnotesize] at (2.5,3.8) {$B_{\varepsilon}(\mu^*)\subseteq P(S)$};
      \node[align=center, font=\footnotesize, text=distrib] at (2.5,3.15)
         {the data law $\mu\in P(X\times Y)$\\ is itself uncertain};
      \node[chip, text=robust] at (2.5,2.3) {worst-case risk\\ induces regularization};
      \node[key, font=\footnotesize] at (2.5,1.75) {strong duality};
      \node[title] at (13.5,8.45) {Inverse Problems};
      \node[obj] at (13.5,7.35) {$\displaystyle \min_{x}\; L(x,y)+\lambda\, R(x)$};
      \node[draw=black!50, very thick, ellipse, minimum width=2.0cm, minimum height=1.5cm] (Xsp) at (12.4,5.5) {};
      \node[draw=black!50, very thick, ellipse, minimum width=2.0cm, minimum height=1.5cm] (Ysp) at (14.6,4.1) {};
      \node[black!50] at (12.9,5.8) {$X$};
      \node[black!50] at (14.6,3.6) {$Y$};
      \fill[black!50] (12.0,5.65) circle (1.2pt) node[above left=-2pt,font=\scriptsize,text=black!50]{$x$};
      \fill[black!50] (15.0,4.25) circle (1.2pt) node[above right=-2pt,font=\scriptsize,text=black!50]{$y$};
      \draw[->, thick] (12.0,5.65) to[bend left=18] node[above,font=\scriptsize]{$H$} (15.0,4.25);
      \draw[->, robust, thick] (15.0,4.05) to[bend left=18] (12.0,5.45);
      \node[font=\scriptsize, text=robust] at (12.9,4.4) {$H^{\dagger}$};
      \node[chip, text=robust]  at (13.5,2.0) {regularization for ill-posedness};
      \node[chip, text=distrib] at (13.5,2.55) {Bayesian / distributional view};
      \node[title, text=struct] at (8,8.7) {Structured DRO};
      \node[font=\footnotesize\itshape, text=struct] at (8,8.18) {(this work)};
      \coordinate (Bc) at (7.5,6.3);
      \coordinate (Kc) at (8.7,6.3);
      \begin{scope}
        \clip (Bc) circle (1.02);
        \fill[struct!18] (Kc) circle (1.02);
      \end{scope}
      \draw[distrib, thick] (Bc) circle (1.02);
      \draw[struct,  thick] (Kc) circle (1.02);
      \fill[black] (Bc) circle (1.3pt);
      \node[font=\scriptsize, below=-1pt] at (Bc) {$\mu^*$};
      \node[distrib, font=\footnotesize, anchor=south] at (6.95,7.3) {$B_\varepsilon(\mu^*)$};
      \node[struct,  font=\footnotesize, anchor=south] at (9.05,7.3) {$K\subseteq P(X\times Y)$};
      \node[align=center, font=\footnotesize, text=struct] at (8.1,4.75)
         {admissible set\\[-2pt] $B_\varepsilon(\mu^*)\cap K$};
      \draw[struct!75, ->] (8.1,5.28) -- (8.1,5.78);
      \node[align=center, font=\footnotesize] at (8,3.55)
        {factorize the data law\\[2pt]
         ${\color{distrib}\mu}=\mu_X\otimes{\color{struct}\mu_{Y| X}}$};
      \node[align=center, font=\footnotesize] at (8,2.8)
        {learn \color{struct}$G\approx H^\dagger$};
      \node[align=center, font=\footnotesize] at (8,2.15)
        {\color{robust} adaptive regularization\\ \color{robust}for inverse problems};
      \draw[->, line width=1.4pt, gray!70] (5.0,4.6) -- (5.45,4.6);
      \draw[->, line width=1.4pt, gray!70] (11.0,4.6) -- (10.55,4.6);
      \begin{scope}[on background layer]
        \fill[white, rounded corners=4pt, draw=gray!40] (0,0.05) rectangle (16,0.95);
      \end{scope}
      \draw[robust, line width=2.6pt]  (0.3,0.5) -- (0.85,0.5);
      \node[anchor=west, font=\footnotesize] at (0.9,0.5) {robustness $\leftrightarrow$ regularization};
      \draw[distrib, line width=2.6pt] (5.7,0.5) -- (6.25,0.5);
      \node[anchor=west, font=\footnotesize] at (6.3,0.5) {shared object: $\mu\in P(X\times Y)$};
      \draw[struct, line width=2.6pt]  (11.3,0.5) -- (11.85,0.5);
      \node[anchor=west, font=\footnotesize] at (11.9,0.5) {$K$ bridges approaches};
    \end{tikzpicture}%
  }
  \caption{Schematic positioning \emph{Structured DRO} (center) between distributionally robust optimization (left) and inverse problems (right). DRO optimizes against the worst case over an ambiguity ball $B_\varepsilon(\mu^*)$, inducing regularization made explicit through strong duality. Inverse problems stabilize a reconstruction $H^\dagger$ of an ill-posed forward operator $H$ by adding a regularizer $R$ to the data-fidelity term. Structured DRO bridges the two: the data law factorizes as $\mu=\mu_X\otimes\mu_{Y|X}$, and restricting the worst-case distribution to a structured set $K\subseteq P(X\times Y)$ yields the admissible set $B_\varepsilon(\mu^*)\cap K$, over which a reconstruction operator $G\approx H^\dagger$ is learned with regularization adapted to the inverse problem. The legend (bottom) illustrates the three threads connecting the viewpoints.} 
  \label{fig:overview}
\end{figure}

We establish strong duality for this general formulation on general parameter and data spaces, extending results of Blanchet and Murthy \cite{blanchet_quantifying_2019} and Blanchet et al. \cite{blanchet_robust_2019} to structured perturbation sets, and for each perturbation space considered (joint, marginal, and conditional), we derive finite‑dimensional dual representations that make the resulting DRO problems interpretable and tractable and offer a clear link to regularization. For each perturbation space, we further obtain explicit upper bounds on the worst‑case risk. For perturbations in $P(Y|X)$, the bound features a Tikhonov-type regularization term penalizing the Lipschitz constant of the reconstruction operator, manifesting concretely in the linear setting as truncated-SVD regularization, with the learned inverse truncating at the intrinsic dimension of the data. Restricting perturbations to $P(Y|X)$ is less conservative than standard Wasserstein DRO, with the gap widening as the problem becomes more well-posed. Finally, we use the conditional Sinkhorn distance as a tractable approximation of the structured framework and follow \cite{wang_sinkhorn_2026} to apply our framework to inverse problems, including linear differentiation, deblurring, and sinogram‑to‑CT reconstruction, as well as to a robust forward simulator, demonstrating that the framework extends beyond the inverse setting. Across all experiments, structured DRO achieves improved robustness, stability, and interpretability, and can outperform baselines across a range of practically relevant scenarios.

\paragraph{Main contributions} The main contributions of this paper are the following.
\begin{enumerate}
    \item We propose a novel DRO framework for learning robust reconstruction operators under structured uncertainty in inverse problems and prove strong duality for general structured perturbation sets, extending the results of Blanchet and Murthy \cite{blanchet_quantifying_2019} and Blanchet et al. \cite{blanchet_robust_2019} beyond linear regression to general parameter and data spaces and constrained ambiguity sets.
    \item We derive finite-dimensional dual formulations and worst-case risk bounds for perturbations in the joint, marginal, and conditional distributions. For perturbations in $P(Y|X)$, the bound induces Tikhonov-type regularization on the Lipschitz constant of the reconstruction operator. In the linear setting, this recovers truncated-SVD regularization at the intrinsic dimension of the data.
    \item We apply the framework to specific inverse problems and demonstrate improved robustness, stability, and interpretability over baselines on differentiation, deblurring, a robust forward simulator, and sinogram-to-CT reconstruction.
\end{enumerate}

\paragraph{Organization of the paper}
Section 2 introduces the framework and its application to inverse problems. Sections 3 and 4 contain the theoretical contributions: strong duality and explicit dual formulations for structured perturbation sets, followed by explicit worst-case risk bounds and their connections to known duality results and classical regularization. Section 5 is a theoretical validation study comparing different DRO formulations for fixed, hand-crafted inverses across inverse problems of increasing ill-posedness, illustrating the bounds of Section 4. Section 6 forms the main empirical part, solving the DRO problem for learned reconstruction maps via a tractable entropic approximation of the conditional Wasserstein distance and applying the framework to inverse problems, including image reconstruction tasks.

\subsection{Related Work}
The literature on DRO has expanded in recent years \cite{rahimian_distributionally_2019}. Much of this work focuses on optimal transport distances \cite{blanchet_quantifying_2019, mohajerin_esfahani_data-driven_2018, gao_distributionally_2023}, though different metrics remain relevant \cite{zhu_kernel_2021, hu_kullback-leibler_2012, blanchet_unifying_2025, wang_sinkhorn_2026}. Bayesian formulations of DRO have also been investigated \cite{zhang_distributionally_2025, nguyen_bridging_2023}, and DRO has been proven useful for data-driven approaches \cite{kuhn_wasserstein_2019}. In particular, Wasserstein DRO has been shown to induce implicit regularization, connecting worst-case risk minimization to regularized estimators \cite{shafieezadeh-abadeh_regularization_2019, blanchet_robust_2019, blanchet_distributionally_2025, li_tikhonov_2022}. Entropic regularization further enables efficient computation of Wasserstein distances \cite{cuturi_sinkhorn_2013, peyre_computational_2019}. 

While classical DRO perturbs the full joint distribution, recent work has explored structured ambiguity sets that exploit problem-specific geometry \cite{liu_need_2023} or conditional structures \cite{chenreddy_data-driven_2022}. These approaches can reduce conservatism by distinguishing between different sources of uncertainty. However, existing methods rarely account for the conditional structure inherent in inverse problems, where the forward operator and measurement noise define distinct components of uncertainty. The difference between our approach and that of \cite{chenreddy_data-driven_2022} lies in what is being perturbed: there, robustness is taken over an uncertainty set of realizations of the perturbation vector, learned from data and conditioned on observed covariates, so the formulation is ``deep data-driven'' robust rather than \emph{distributionally} robust. We instead perturb the data-generating measure itself and impose conditional structure by fixing the $X$-marginal while allowing perturbations only in the conditional law $P(Y|X)$.

In the context of inverse problems, regularization and Bayesian formulations remain the standard for stabilizing ill-posed reconstructions \cite{benning_modern_2018}, while data-driven methods increasingly integrate machine learning \cite{arridge_solving_2019, mukherjee_end--end_2021, hauptmann_convergent_2025, ongie_deep_2020}. These range from purely learned reconstruction maps \cite{adler_learned_2018, jin_deep_2017, zhu_image_2018} to learned regularizers \cite{li_nett_2020, kobler_total_2020-1, habring_neural-network-based_2024, goujon_neural-network-based_2023} and plug-and-play schemes that substitute a denoiser for the proximal step \cite{sun_online_2019, pesquet_learning_2021}, with Bayesian and generative formulations used to quantify reconstruction uncertainty \cite{barbano_quantifying_2021, oliviero-durmus_generative_2025}. Comparative and convergence-oriented studies of learned regularizers \cite{hertrich_learning_2025, mukherjee_learned_2023} and a survey of diffusion-based approaches \cite{daras_survey_2024} give broader overviews.

Optimal transport-based methods are gaining popularity in this setting \cite{carioni_unsupervised_2023}. The Wasserstein distance itself finds applications in various aspects of inverse problems, serving as a regularization term \cite{bredies_generalized_2023-1}, appearing in a projection-based method \cite{heaton_wasserstein-based_2022}, or as a loss function \cite{adler_learning_2017}, and it underlies adversarial regularizers \cite{lunz_adversarial_2019} as well as OT-based generative models \cite{arjovsky_wasserstein_2017, genevay_learning_2018, patrini_sinkhorn_2020}. More recently, flow matching has been used to build solvers for inverse problems \cite{pourya_flower_2026, martin_PnP-flow_2025}.

Our structured DRO framework explicitly models uncertainty in the different components of the measurement process, extending standard DRO to general data spaces and incorporating the possible ill-posedness and non-linearity of an inverse problem, while exploiting their properties through structured perturbation sets and providing dual formulations yielding interpretable regularization.
\section{Structured Distributionally Robust Optimization}
This section introduces our framework for structured distributionally robust optimization. We begin by specifying the admissible perturbations through an $S$-parametrized family of sets $K_s \subset P(S)$. The set $K_s$ contains all perturbations of the empirical distribution $\mu^* \in P(S)$ that can be chosen by the adversary in \eqref{eq:DRO}. Note that since $K_s$ depends explicitly on $s$, the adversary can choose perturbations depending on the input $s$. In the rest of the paper, a $*$ denotes that we assume the measure is fixed. 

More concretely, we characterize admissible perturbations of $\mu^*$ obtained through the action of $\mu^*$-measurable kernels $\pi_{S|S} : S \rightarrow P(S)$ such that $\pi_{S|S=s} \in K_s$ for $\mu^*$-a.e. $s \in S$ as 
\begin{align*}
    \pi[\mu^*](A) := \int \pi_{S|S=s}(A)\, d\mu^*(s)  \quad A\subset S \ \ \text{Borel set}.
\end{align*}
With a slight abuse of notation, we will denote such measurable kernels by $\pi_{S|S} : S \rightarrow K_s$; however, we warn the reader that they should be intended to map to $K_s$ for $\mu^*$-a.e. $s \in S$.
Note that $\pi[\mu^*] \in P(S)$ is simply the second marginal of the joint distribution $\mu^* \otimes \pi_{S|S=s}$. 

We now consider family of admissible kernels $\pi_{S|S}$ whose action lies within an $\varepsilon$-ball around $\mu^*$, that is the set 
\begin{align*}
    \Okc &:= \Big\{\pi_{S|S}: S \rightarrow K_s,\ \int_S \int_S c(s,r)\,d\pi_{S|S=s}(r)\,d\mu^*(s) \leq \varepsilon\Big\}.
\end{align*}
Note that if $K_s = P(S)$, then $\Okc$ is simply the $\varepsilon$-Wasserstein ball around $\mu^*$. However, in the case of more restrictive choices on $K_s$, this is no longer the case.
Our focus is on distributionally robust optimization problems defined by such type of perturbations. Given a parameter space $\mathcal{G}$ and a measurable loss function $\ell: S \times \mathcal{G} \to [0, \infty)$, the structured Wasserstein-DRO (W-DRO) problem is formulated as follows.

\begin{definition}\label{def:padro}
    The structured W-DRO problem is 
    \begin{align*}
        \inf_{G \in \mathcal{G}} \sup_{\pi_{S|S} \in \Okc} \int_S\int_S \ell(r,G)\,d\pi[\mu^*](r) 
      \end{align*}
      or equivalently
\begin{align}\label{eq:d-padro}
        \inf_{G \in \mathcal{G}} \sup_{\pi_{S|S} \in \Okc} \int_S\int_S \ell(r,G)\,d\pi_{S|S=s}(r)\,d\mu^*(s).
    \end{align}
\end{definition}

\subsection{Structured DRO Applied to Inverse Problems}\label{sec:str}
In the context of inverse problems, where the sample space is $S = X\times Y$ (we will often write $s = (x,y)$ to indicate an element of $S$), the choice of admissible perturbations $K_s \subset P(X \times Y)$ should reflect the type of robustness we aim to enforce in the solution. This makes the choice of $K_s$ inherently problem-dependent. We adopt a Bayesian interpretation of the inverse problem, where the data acquisition process induces a joint distribution $\mu \in P(X \times Y)$ over the parameter and measurement spaces. From this perspective, the goal is to identify worst-case posterior distributions with respect to different classes of perturbations $K_s$. 

Throughout, $\mu$ denotes a probability measure on $X\times Y$ while $\mu_X$, $\mu_Y$ are its marginals and conditional law $\mu_{Y|X}: X \to P(Y)$, while $\pi$ is reserved for couplings and perturbation kernels on the sample space $S\times S$ that transport such measures. 

We distinguish four main types of perturbations:
\begin{enumerate}
    \item \textbf{$X$-perturbations}: These perturb the distribution of the inputs. For example, via transformations, shifts, or frequency modifications. Here, 
    \begin{align*}
        K_{(x,y)} \subset \{\mu_X \otimes \mu^*_{Y|X} : \mu_X \in P(X)\}
    \end{align*}
    where $\mu^*_{Y|X}$ is an estimated posterior and can be chosen, for example, as $\mu^*_{Y|X=u}= \delta_{Hu}$. Note that in this case $K_{(x,y)}$ is independent of the input $(x,y)$.
    \item \textbf{$Y$-perturbations}: These model changes in the distribution of the measurements, effectively perturbing the observed data. In this case, 
    \begin{align*}
        K_{(x,y)} \subset \{ \mu_Y \otimes \mu^*_{X|Y}  : \mu_Y \in P(Y)\}
    \end{align*}
    where $\mu^*_{X|Y}$ is a given estimated likelihood such as for example $\mu^*_{X|Y=v} = \delta_{H^\dagger v}$ where $H^\dagger$ denotes some pseudo-inverse of $H$. Note that also in this case $K_{(x,y)}$ is independent on the input $(x,y)$.
    \item \textbf{$Y|X$-perturbations}: These describe perturbations in the conditional distribution $Y|X$, which are particularly relevant for inverse problems as it mimics the data acquisition process. It can include different types of noise present in the measurements as well as other types of model imperfections, such as an imperfect forward model. A key example is additive noise in measurements as in \eqref{eq:forward-operator}. 
    The admissible set is defined as all joint distributions whose marginal over $X$ matches a given reference. 
    In particular,
    \begin{align*}
        K_{(x,y)} \subset \{ \delta_x \otimes \mu_{Y|X} :  \mu_{Y|X} \in P(Y)\}.
    \end{align*}
    Note that if equality holds above, then the set of admissible perturbations reduces to the set of $\mu \in P(S)$ with marginal $\mu_X^*$. Moreover, in this case, the set $K_{(x,y)}$ depends on $x$. It fixes $x$ by allowing only perturbations in the conditional law $\mu_{Y|X=x}$.
    \item \textbf{$X|Y$-perturbations}: Similar to the previous class, but instead focused on the inverse conditional process $X|Y$. The admissible set here consists of
    \begin{align*}
        K_{(x,y)} \subset \{ \delta_y \otimes \mu_{X|Y} : \mu_{X|Y}\in P(X) \}.
    \end{align*}
\end{enumerate}

In this work, we focus on \textbf{$\mathbf{Y|X}$-perturbations}, due to their particular relevance in modeling measurement noise. As an example, consider an inverse problem with a general noisy forward operator $y=H^\delta x$ where $H^\delta: X \to Y$ incorporates the uncertainty of the measurements. Different noise models correspond to different choices of $H^\delta$, and in the structured DRO framework, that leads to a corresponding set of joint distributions
\begin{align*}
    K_s = \left\{ \delta_x \otimes \mu_{Y|X}^{H^\delta} : H^\delta \in \mathcal{H} \right\},
\end{align*}
where $\mathcal{H}$ denotes the class of admissible stochastic forward operators and $\mu_{Y|X}^{H^\delta} \in P(Y)$ is the conditional distribution of $v$ given $u$, defined by $\mu_{Y|X=x}^{H^\delta} = \text{Law}(H^\delta x)$ with $\text{Law}(Z)$ denoting the probability distribution of a random variable $Z$. Note that, for any $\phi \in C(S)$ and $\pi_{S|S} \in K_s$ it holds that
\begin{align*}
    \pi[\mu^*](\varphi) &  = \int \phi(u,v)\ d\mu_{Y|X=u}^{H^\delta}(v)\ d\delta_x(u)\ d\mu^*(x,y) 
    = \int \phi(x,v)\ d\mu_{Y|X=x}^{H^\delta}(v)\ d\mu_X^*(x)
\end{align*}
where $\mu_X^*$ is the first marginal of $\mu^*$, implying thus that $\pi[\mu^*] = \mu_X^*\otimes \mu_{Y|X}^{H^\delta}$. 
Therefore admissible perturbations are the ones that keep $\mu_X^*$ fixed and perturb each measurement $y \in Y$ given $x\in X$ as $\mu_{Y|X=x}^{H^\delta}$. As a consequence, as we will discuss in Section \ref{sec:reg-y|x}, the W-DRO problem \eqref{eq:d-padro} searches for ambiguities in a conditional Wasserstein ball \cite{chemseddine_conditional_2025}.

The set $K_s$ accommodates any noise model expressible via a stochastic forward operator $H^\delta$, allowing robust reconstructions under a wide range of uncertainties.
\begin{example}\label{ex:noise}
Noise models in inverse problems can be formulated in the structured DRO framework as follows.
\begin{enumerate}[label=\textnormal{(\alph*)}, itemsep=0pt, topsep=2pt]
    \item \textbf{Additive Gaussian noise.}\label{ex:noise:gaussian}
    The forward operator is $H^\delta x = Hx + \delta$ with $\delta \sim \mathcal{N}(0,\Sigma)$ and $\Sigma \succ 0$, so the conditional distribution is $\mu_{Y|X=x}^{H^\delta}(y) = \mathrm{Law}(Hx+\delta)(y) = \mathcal{N}(Hx,\Sigma)(y)$. We recover this model with $K_s = \{ \delta_x \otimes \mathcal{N}(Hx, \Sigma) : \Sigma \preceq M_1 \}$ for $0 \prec M_1$.

    \item \textbf{Poisson noise.}\label{ex:noise:poisson}
    Poisson noise models photon-counting errors, with applications in low-dose CT and other photon-limited imaging. The noisy operator is $H^\delta x \sim \tfrac{1}{I_0}\mathrm{Pois}(I_0 Hx)$ for $Hx \geq 0$, where $I_0 > 0$ is the photon count. The conditional distribution is then $\mu_{Y|X=x}^{H^\delta} = \tfrac{1}{I_0}\mathrm{Pois}(I_0 Hx)$, and $K_s = \{ \delta_x \otimes \tfrac{1}{I_0}\mathrm{Pois}(I_0 Hx) : I_0 \leq M_2 \}$ with $M_2>0$.
\end{enumerate}
One can already note that the sets $K_s$ in the noise models defined before are neither convex nor weak* closed. Even if not relevant for the primal problem, this will be relevant for the next section, where strong duality is established under such assumptions. To resolve such an issue, it is enough to consider the weak*-closed convex envelope of $K_s$.
\end{example}
\section{Duality Theory for Structured DRO}\label{sec:duality}
The inner worst-case term in a DRO problem maximizes the expected loss over the entire ambiguity set, an infinite-dimensional family of probability measures, and cannot be computed directly. Under suitable assumptions, strong duality rewrites this supremum over measures as a finite-dimensional minimization over a dual variable, reducing the min-max problem to a minimization. The resulting dual representation also reveals the structure of the worst-case risk and makes the link between robustness and regularization explicit, which we develop in the following sections, and is essential for solving \eqref{eq:d-padro}. 
Defining
\begin{align*}
    I(\pi_{S|S}, G) &:= \int_S\int_S \ell(r,G)\,d\pi_{S|S=s}(r)\,d\mu^*(s),
\end{align*}
the \emph{primal problem} \eqref{eq:d-padro} can be thus rewritten as 
\begin{align*}
I_{\Okc}(G) := \sup_{\pi_{S|S} \in \Okc} I(\pi_{S|S}, G).
\end{align*}
For the dual formulation, define
\begin{align*}
    \D := \Big\{ (\lambda,h) : \lambda \in \R_+,\ & h\in C(S\times S),\ \int_S\int_S h(s,r)\,d\pi_{S|S=s}(r)\,d\mu^*(s) \\
    &\geq \int_S\int_S \big(\ell(r, G) - \lambda c(s, r)\big)\,d\pi_{S|S=s}(r)\,d\mu^*(s) \quad \forall\, \pi_{S|S} : S \rightarrow K_s \Big\}.
\end{align*}
We first prove the following weak duality result.


\begin{proposition}[Weak duality]\label{prop:weakduality}
    Assume $S$ is a compact, Polish space, $c: S \times S \to \R_+$ is non-negative, continuous, and such that $c(s,r) = 0$ if and only if $s=r$, and $\ell(\cdot,G)$ is continuous. Then
    \begin{align*}
        I_{\Okc} (G)\leq \inf_{(\lambda, h) \in \D} \lambda \varepsilon + \sup_{\pi_{S|S} : S \rightarrow K_s} \int_S\int_S h(s,r) d\pi_{S|S=s}(r)d\mu^*(s).
    \end{align*}
\end{proposition}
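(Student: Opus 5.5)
The argument is a Lagrangian bound: fix an admissible kernel and an arbitrary dual pair, and show the primal value at that kernel is dominated by the dual objective at that pair. Taking the supremum over kernels in $\Okc$ and the infimum over $\D$ then gives the claim. If $\D$ is empty the right-hand side is $+\infty$ and there is nothing to prove, so fix $G$, a kernel $\pi_{S|S} \in \Okc$ and a pair $(\lambda,h)\in\D$.

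\textbf{Step 1 (adding the constraint term).} Write $C(\pi_{S|S}) := \int_S\int_S c(s,r)\,d\pi_{S|S=s}(r)\,d\mu^*(s)$. Membership in $\Okc$ means $C(\pi_{S|S})\le\varepsilon$, and $\lambda\ge 0$, so
\begin{align*}
I(\pi_{S|S},G) \le I(\pi_{S|S},G) + \lambda\big(\varepsilon - C(\pi_{S|S})\big) = \lambda\varepsilon + \int_S\int_S \big(\ell(r,G)-\lambda c(s,r)\big)\,d\pi_{S|S=s}(r)\,d\mu^*(s).
\end{align*}

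\textbf{Step 2 (using the dual constraint).} Every kernel in $\Okc$ maps into $K_s$ for $\mu^*$-a.e.\ $s$, so it is one of the kernels quantified over in the definition of $\D$. Hence the last integral is bounded by $\int\int h(s,r)\,d\pi_{S|S=s}(r)\,d\mu^*(s)$. That in turn is at most the supremum of the same expression over all kernels $\pi_{S|S}: S\to K_s$, since this supremum ranges over a larger set than $\Okc$.

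\textbf{Step 3 (well-posedness).} This is the only genuinely delicate point, and I expect it to be the main obstacle only in the sense of bookkeeping. The splitting in Step 1 must not produce $\infty-\infty$. Because $S$ is compact and $c$, $\ell(\cdot,G)$ and $h$ are continuous, all three are bounded on $S$ and $S\times S$ respectively. They are also jointly measurable, so each integral against the probability measure $\mu^*\otimes\pi_{S|S}$ is finite and linearity of the integral applies. Measurability of $s\mapsto\int h(s,r)\,d\pi_{S|S=s}(r)$ follows from the kernel being measurable and $h$ being bounded and continuous, via the standard monotone class argument.

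The hypotheses that $S$ is Polish and that $c(s,r)=0$ iff $s=r$ are not needed for weak duality. They presumably serve later for strong duality, for instance to guarantee that the identity kernel lies in $\Okc$.

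Combining Steps 1 and 2 gives $I(\pi_{S|S},G)\le \lambda\varepsilon+\sup_{\pi'}\int\int h\,d\pi'\,d\mu^*$. Taking the supremum over $\pi_{S|S}\in\Okc$ on the left and then the infimum over $(\lambda,h)\in\D$ on the right yields the proposition.
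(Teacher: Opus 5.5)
Your proof is correct and follows the paper's argument. For fixed $\pi_{S|S}\in\Okc$ and $(\lambda,h)\in\D$ you add the nonnegative slack $\lambda\bigl(\varepsilon-\int_S\int_S c\,d\pi_{S|S=s}\,d\mu^*\bigr)$, invoke the defining inequality of $\D$, and enlarge the supremum from $\Okc$ to all kernels into $K_s$ before taking the supremum and infimum; your added bookkeeping on boundedness and measurability, ruling out $\infty-\infty$, is a harmless refinement that the paper leaves implicit.
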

\begin{proof}
    For any $\pi_{S|S} \in \Okc$ and $(\lambda, h) \in \D$ we have
    \begin{align*}
        \lambda \varepsilon + \int_S\int_S h(s,r) & d\pi_{S|S=s}(r)d\mu^*(s) \geq \lambda \varepsilon + \int_S\int_S \ell(r, G) - \lambda c(s,r) d\pi_{S|S=s}(r)d\mu^*(s) \\
        &= \int_S\int_S \ell(r, G)d\pi_{S|S=s}(r)d\mu^*(s) + \lambda \left(\varepsilon - \int_S\int_S c(s,r)d\pi_{S|S=s}(r)d\mu^*(s) \right) \\
        &\geq \int_S\int_S\ell(r, G) d\pi_{S|S=s}(r)d\mu^*(s) = I(\pi_{S|S}, G).
    \end{align*}
    Taking first the supremum with respect to $\pi_{S|S} \in \Okc$ and then the infimum over $(\lambda, h) \in \D$, we conclude that
    \begin{align*}
        I_{\Okc} (G) & = \sup_{\pi_{S|S} \in \Okc} I(\pi_{S|S}, G) \leq 
        \inf_{(\lambda, h) \in \D}  \lambda \varepsilon + \sup_{\pi_{S|S}\in \Okc} \int_S\int_S h(s,r) d\pi_{S|S=s}(r)d\mu^*(s) \\
        & \leq \inf_{(\lambda, h) \in \D}  \lambda \varepsilon + \sup_{\pi_{S|S} : S\rightarrow K_s} \int_S\int_S h(s,r) d\pi_{S|S=s}(r)d\mu^*(s)
    \end{align*}
    which is the claimed bound.
\end{proof}

We are now ready to state the strong duality result. For the proof, we refer the reader to Appendix \ref{supp:general-duality-proof}.

\begin{theorem}[Strong duality]\label{thm:w-padro-strong-duality}
    Assume $S$ is a compact, Polish space, $c: S \times S \to \R_+$ is non-negative, continuous, and such that $c(s,r) = 0$ if and only if $s=r$, and $\ell(\cdot,G)$ is continuous. Assume additionally that $K_s$ is non-empty, weakly* closed and convex for $\mu^*-a.e.\ s \in S$. Then if $\Omega_{K,c,\varepsilon}$ is also non-empty it holds that 
    \begin{align*}
            I_{\Okc} (G) = \inf_{(\lambda, h) \in \D} \lambda \varepsilon + \sup_{\pi_{S|S} : S \to K_s} \int_{S \times S} h(s,r) d\pi_{S|S=s}(r)d\mu^*(s).
        \end{align*}
\end{theorem}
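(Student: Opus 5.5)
The inequality ``$\leq$'' is already Proposition~\ref{prop:weakduality}, so only ``$\geq$'' remains. I would obtain it by a Lagrangian/minimax argument on a suitable compact convex set of measures. The idea is to trade the kernels $\pi_{S|S}$ for joint measures. Let
\begin{align*}
\mathcal{M} := \{\gamma = \mu^*\otimes \pi_{S|S} \in P(S\times S) : \pi_{S|S=s}\in K_s \ \mu^*\text{-a.e.}\},
\end{align*}
i.e.\ couplings with first marginal $\mu^*$ whose disintegration lies in $K_s$. Convexity of each $K_s$ makes $\mathcal{M}$ convex, since disintegrations of convex combinations are convex combinations a.e.

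\textbf{Compactness of $\mathcal{M}$.} I would show that $\mathcal{M}$ is weak* closed in $P(S\times S)$, which is compact because $S$ is compact. This is the main obstacle: weak* limits of $\mu^*\otimes\pi^n$ need not have disintegrations that converge pointwise. My first attempt would express membership through test functions, using a Hahn--Banach description of the closed convex set $K_s$:
\begin{align*}
\nu\in K_s \iff \int f\,d\nu \le \sigma_{K_s}(f) \quad \text{for all } f \text{ in a countable dense subset of } C(S),
\end{align*}
where $\sigma_{K_s}$ is the support function. One then integrates these inequalities against nonnegative $g\in C(S)$ in the $s$ variable:
\begin{align*}
\int g(s)f(r)\,d\gamma \le \int g\,\sigma_{K_\cdot}(f)\,d\mu^*.
\end{align*}
These constraints pass to weak* limits if $s\mapsto\sigma_{K_s}(f)$ is upper semicontinuous, or at least measurable, with Lusin's theorem applied. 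I expect that some measurable-selection or semicontinuity property of $s\mapsto K_s$ is implicitly needed here. Failing that, I would avoid compactness in the measure variable altogether and apply the minimax theorem with compactness in $\lambda$ instead.

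\textbf{Minimax step.} On $\mathcal{M}$ I would write
\begin{align*}
I_{\Okc}(G) = \sup_{\gamma\in\mathcal{M}} \inf_{\lambda\ge 0} \int (\ell(r,G) - \lambda c(s,r))\,d\gamma + \lambda\varepsilon.
\end{align*}
The integrand is continuous and bounded on the compact space $S\times S$, so the Lagrangian is linear and weak* continuous in $\gamma$ and affine in $\lambda$. Sion's minimax theorem, using compactness of $\mathcal{M}$, then lets me swap to
\begin{align*}
\inf_{\lambda\ge0}\Big[\lambda\varepsilon + \sup_{\gamma\in\mathcal{M}} \int (\ell - \lambda c)\,d\gamma\Big].
\end{align*}
Nonemptiness of $\Okc$ ensures the primal is not $-\infty$, so the infimum is over a meaningful range.

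\textbf{Identification with the dual.} For each $\lambda$, choose $h_\lambda(s,r) := \ell(r,G) - \lambda c(s,r)$. This function lies in $C(S\times S)$ by the continuity assumptions, and $(\lambda,h_\lambda)\in\D$ holds trivially with equality in the defining constraint. Hence
\begin{align*}
\inf_{(\lambda,h)\in\D}\lambda\varepsilon + \sup_{\pi}\int h\,d\pi\,d\mu^* \le \inf_{\lambda}\Big[\lambda\varepsilon + \sup_{\pi}\int h_\lambda\,d\pi\,d\mu^*\Big] = I_{\Okc}(G).
\end{align*}
Combined with weak duality, this gives equality. The positivity of $c$ off the diagonal is not needed in this argument beyond guaranteeing that the identity kernel $\delta_s$ has zero cost. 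Thus all the difficulty is concentrated in the closedness of $\mathcal{M}$ that justifies the minimax swap.
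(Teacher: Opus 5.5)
Your argument is correct, but it reaches the reverse inequality by a different route than the paper.

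The paper follows Blanchet--Murthy and uses Fenchel duality:
\begin{itemize}
\item it builds the convex functional $\Theta$ on $C(S\times S)$ and the set $B$, and exhibits an interior point of $B$ (the constant $\sup_r \ell(r,G)+1$);
\item it computes $\Theta^*$, using Hahn--Banach separation from the closed convex set $\Omega_K$ to show that $\Theta^*$ is the indicator of the feasible plans;
\item it applies Luenberger's Fenchel duality theorem to get $\inf_{g\in B}\Theta(g)=I_{\Okc}(G)$, which dominates the dual value.
\end{itemize}

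You instead note that $h_\lambda=\ell-\lambda c$ gives $(\lambda,h_\lambda)\in\D$. So the dual value is at most the scalar Lagrangian dual $\inf_{\lambda\ge 0}\lambda\varepsilon+\sup_{\gamma\in\mathcal M}\int(\ell-\lambda c)\,d\gamma$, and you close the gap with Sion's theorem. Compactness is only needed on the $\gamma$ side, so no Slater point or a priori bound on $\lambda$ is required. Your fallback (compactness in $\lambda$, which would need something like $\delta_s\in K_s$, not among the hypotheses) is therefore unnecessary.

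Both proofs rest on the same key fact, the weak* closedness of $\mathcal M$ (Lemma~\ref{lem:close}), which you sketch exactly as the paper proves it. Your concern there is larger than needed. Every element of $\mathcal M$ has first marginal $\mu^*$, so in $\int g(s)f(r)\,d\gamma_n\le\int g(s)\sigma_{K_s}(f)\,d\mu^*(s)$ only the left side moves with $n$. Mere measurability of $s\mapsto\sigma_{K_s}(f)$ therefore suffices, with no semicontinuity or Lusin argument; the paper also assumes this measurability tacitly.

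What each route buys:
\begin{itemize}
\item Yours is shorter and avoids the conjugate computation. It also makes explicit that the freedom in $h$ plays no role in the general theorem; it only matters in the later specializations where $h$ is taken to depend on $s$ alone.
\item The paper's Fenchel template is the one the authors invoke for the non-compact, semicontinuous extension in their remark. There compactness of $\mathcal M$ is no longer available, so your Sion argument would not carry over directly.
\end{itemize}
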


\begin{remark*}\label{rem:strong-duality}
    We already mentioned here that the strong duality result is true also for non-compact $S$, lower semicontinuous cost and upper semicontinuous losses $\ell(\cdot, G)$, by following the same proof strategy of \cite{blanchet_quantifying_2019}. However, in order to keep the paper concise, we prefer to cover in detail only the case of compact $S$ and continuous $c$ and $\ell$. 
\end{remark*}

\subsection{Dual Problems for Inverse Problems with Targeted Perturbations}
In this section, we specialize the results obtained in the previous section to perturbations that are relevant for inverse problems reconstructions. In particular, we derive explicit formulas in case the perturbations are in $X\times Y$, $Y|X$, and $X$. We start with perturbations in the joint space; the proof is straightforward and can be found in Appendix \ref{supp:p(s)-duality-proof}. 

\begin{theorem}[Strong duality for perturbations in $P(S)$]\label{th:k=p(s)}
    Let $K_s=P(S)$ for every $s$, and suppose all assumptions in Theorem~\ref{thm:w-padro-strong-duality} hold. Then
    \begin{align*}
        I_{\Okc} (G) &= \inf_{\lambda \geq 0} \lambda \varepsilon +  \int_{S} \phifix(s) d\mu^*(s), \\
        \text{where} \quad \phifix (x,y) &:= \sup_{(u,v) \in S} \ell(u,v,G) - \lambda c((x,y),(u,v)).
    \end{align*}
\end{theorem}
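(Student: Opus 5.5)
The plan is to start from Theorem~\ref{thm:w-padro-strong-duality}, whose hypotheses hold since $K_s=P(S)$ is non-empty, convex and weakly* closed, and $\Okc$ contains the identity kernel $\pi_{S|S=s}=\delta_s$ because $c(s,s)=0$. The dual is then
\[
I_{\Okc}(G)=\inf_{(\lambda,h)\in\D}\ \lambda\varepsilon+\sup_{\pi_{S|S}:S\to P(S)}\int_S\int_S h(s,r)\,d\pi_{S|S=s}(r)\,d\mu^*(s).
\]
It remains to reduce the pair $(\lambda,h)$ to the scalar $\lambda$ by showing that, for fixed $\lambda$, the optimal choice of $h$ gives exactly $\int_S\phifix\,d\mu^*$.

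For the upper bound ($\le$), I fix $\lambda\ge 0$ and want to use $h(s,r):=\phifix(s)$, which does not depend on $r$. Since $S$ is compact and $\ell(\cdot,G)$, $c$ are continuous, the supremum defining $\phifix$ is attained and finite. Moreover, $\phifix$ is continuous as a supremum of a jointly continuous function over a compact set; for instance it is uniformly continuous in $s$ because $c$ is uniformly continuous on $S\times S$. Hence $h\in C(S\times S)$. The inequality $\phifix(s)\ge \ell(r,G)-\lambda c(s,r)$ holds pointwise, so integrating against any kernel shows $(\lambda,h)\in\D$. Because $h$ is independent of $r$, the inner supremum over kernels equals $\int_S\phifix\,d\mu^*$, which gives $I_{\Okc}(G)\le \inf_{\lambda}\lambda\varepsilon+\int\phifix\,d\mu^*$.

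For the lower bound ($\ge$), I take any $(\lambda,h)\in\D$ and show that $\sup_\pi\int\!\int h\,d\pi\,d\mu^*\ge\int\phifix\,d\mu^*$. The plan is to pick a measurable selection $r^*(s)\in\arg\max_r\{\ell(r,G)-\lambda c(s,r)\}$, which exists by a measurable maximum theorem (Kuratowski--Ryll-Nardzewski) since the objective is continuous on the compact Polish space $S$. Set $\pi_{S|S=s}=\delta_{r^*(s)}$. The constraint defining $\D$ then gives
\[
\int_S\int_S h\,d\pi_{S|S=s}\,d\mu^*\ \ge\ \int_S\big(\ell(r^*(s),G)-\lambda c(s,r^*(s))\big)\,d\mu^*(s)=\int_S\phifix\,d\mu^*.
\]
Taking the infimum over $\D$ then matches the upper bound and yields the claimed formula.

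The only delicate point is the measurable selection, together with the continuity of $\phifix$ needed to make $h$ admissible. If the selection theorem is awkward to apply, a fallback is to use $\eta$-optimal selections built from a finite $\eta$-net of $S$, which are piecewise constant and hence measurable, and then let $\eta\to0$.
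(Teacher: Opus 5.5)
Your proof is correct and follows the paper's argument. The upper bound uses the $r$-independent potential $h(s,r)=\phifix(s)$, and the lower bound tests the constraint defining $\D$ against Dirac kernels $\delta_{\varphi(s)}$; where you take a measurable argmax selection, the paper instead cites Rockafellar's interchange theorem to write $\sup_\varphi\int=\int\sup$, and your explicit check that $\phifix$ is continuous (so that $h\in C(S\times S)$) fills in a detail the paper leaves implicit.
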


When $K_s=P(S)$, the structured DRO problem reduces to the standard DRO problem, and the dual representation above recovers the standard DRO strong duality reformulation established by Blanchet and Murthy~\cite[Theorem~1b]{blanchet_quantifying_2019}. The statement is a direct specialization of Theorem~\ref{thm:w-padro-strong-duality} to $K_s=P(S)$: since $P(S)$ contains all Dirac measures on $S$, the supremum over kernels $\pi_{S|S=s}:S\to K_s$ in the general dual collapses to the pointwise supremum over $S$ appearing in $\phifix$, and the argument is straightforward. In the inverse problems setting $S:=X\times Y$, this corresponds to perturbations in the joint probability space of $X$ and $Y$.


We now consider perturbations only on the conditional distribution $P(Y|X)$, and we derive a finite-dimensional dual in this case. We refer to this model as $P(Y|X)$-DRO.

\begin{theorem}[Strong duality for perturbations in $P(Y|X)$]\label{th:k=Y|X}
Let $K_{(x,y)}=\{\mu=\delta_x \otimes \mu_{Y|X=x},\ \mu_{Y|X} \in P(Y|X) \}$ where $P(Y|X)$ denotes the set of Markov kernels $X \to P(Y)$, and suppose all assumptions in Theorem~\ref{thm:w-padro-strong-duality} hold. Then
    \begin{align*}
        I_{\Okc} (G) &= \inf_{\lambda \geq 0} \lambda \varepsilon + \int_{S} \phifix(x,y) d\mu^*(x,y), \\
        \text{where} \quad \phifix (x,y) &:=\sup_{v \in Y}\ell (x, v, G) - \lambda c((x,y),(x,v)).
    \end{align*}
\end{theorem}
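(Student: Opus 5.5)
We specialize Theorem~\ref{thm:w-padro-strong-duality}, mirroring the argument for Theorem~\ref{th:k=p(s)}. First, $K_{(x,y)}=\{\delta_x\otimes\nu:\nu\in P(Y)\}$ is non-empty and convex. It is also weakly* closed: if $\delta_x\otimes\nu_n\rightharpoonup\mu$, then the first marginal of $\mu$ is $\delta_x$, so $\mu=\delta_x\otimes\nu$ with $\nu$ the $Y$-marginal. The ambiguity set $\Okc$ is non-empty, since the kernel $\pi_{S|S=s}=\delta_s$ has zero cost because $c(s,s)=0$. Hence strong duality holds:
\begin{align*}
I_{\Okc}(G)=\inf_{(\lambda,h)\in\D}\lambda\varepsilon+\sup_{\pi_{S|S}:S\to K_s}\int\!\!\int h(s,r)\,d\pi_{S|S=s}(r)\,d\mu^*(s).
\end{align*}
It remains to show that this right-hand side equals $\inf_{\lambda\ge0}\lambda\varepsilon+\int\phifix\,d\mu^*$.

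\emph{Lower bound.} Fix $(\lambda,h)\in\D$ and $\eta>0$. Compactness of $Y$ and continuity of $\ell$ and $c$ give a measurable selection $v^\eta(x,y)$ with
\begin{align*}
\ell(x,v^\eta,G)-\lambda c((x,y),(x,v^\eta))\ge\phifix(x,y)-\eta.
\end{align*}
In fact the supremum is attained, and a measurable argmax exists by the measurable maximum theorem. The kernel $\pi_{S|S=(x,y)}=\delta_{(x,v^\eta(x,y))}$ maps into $K_{(x,y)}$. Plugging it into the constraint defining $\D$ gives
\begin{align*}
\sup_{\pi}\int\!\!\int h\,d\pi\,d\mu^*\ \ge\ \int h\bigl(s,(x,v^\eta)\bigr)\,d\mu^*\ \ge\ \int\phifix\,d\mu^*-\eta .
\end{align*}
Letting $\eta\to0$ and taking the infimum shows that the dual value is at least $\inf_\lambda\lambda\varepsilon+\int\phifix\,d\mu^*$.

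\emph{Upper bound.} For fixed $\lambda$, choose $h(s,r)=\ell(r,G)-\lambda c(s,r)$, which is continuous. Then $(\lambda,h)\in\D$ trivially. For any kernel with $\pi_{S|S=(x,y)}=\delta_x\otimes\nu$,
\begin{align*}
\int h\,d\pi_{S|S=(x,y)}=\int_Y\bigl[\ell(x,v,G)-\lambda c((x,y),(x,v))\bigr]\,d\nu(v)\le\phifix(x,y).
\end{align*}
Integrating against $\mu^*$ and taking the supremum over kernels gives at most $\int\phifix\,d\mu^*$. The infimum over $\D$ is therefore at most $\inf_\lambda\lambda\varepsilon+\int\phifix\,d\mu^*$.

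\emph{Main obstacle.} The delicate point is measurability in the lower bound. We need the selected kernel $s\mapsto\delta_{(x,v^\eta(s))}$ to be a $\mu^*$-measurable Markov kernel, and $\phifix$ itself to be measurable. On compact $S$ with continuous data, $\phifix$ is continuous, since a supremum of a jointly continuous function over a compact set is continuous. A Borel argmax selection then follows from the Kuratowski--Ryll-Nardzewski theorem, which settles both requirements.
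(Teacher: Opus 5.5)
Your proposal is correct and follows essentially the paper's argument: the lower bound (plugging Dirac kernels $\delta_x\otimes\delta_{v(x,y)}$ into the constraint defining $\D$ and then using a measurable selection) is exactly what the paper does. The only difference is the upper bound. The paper uses the potential $h(s,r)=\phifix(s)$, while you use $h=\ell-\lambda c$. Your choice makes membership in $\D$ and continuity of $h$ immediate, whereas the paper implicitly needs $\phifix\in C(S)$ to have $h\in C(S\times S)$; your maximum-theorem remark supplies exactly that fact.
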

\begin{proof}
We first prove the inequality $\leq$. Writing $s=(x,y)$ and $r=(u,v)$, note that 
 \begin{align*}
        \left\{(\lambda, h): \lambda \in \R_+,\ h(x,y,u,v) = \phifix(x,y) \right\} \subseteq \D.
    \end{align*}
Therefore, it holds that 
\begin{align*}
    I_{\Okc} (G) & = \inf_{(\lambda, h) \in \D} \lambda \varepsilon + \sup_{\pi_{S|S} : S \rightarrow K_s} \int_{S \times S} h(r,s) d\pi_{S|S=s} (r) d\mu^*(s)\\
    & \leq \inf_{\lambda \geq 0} \lambda\varepsilon + \sup_{\pi_{S|S} : S \rightarrow K_s} \int_S \phifix(x,y) d\mu^*(x,y) \\
    & = \inf_{\lambda \geq 0} \lambda\varepsilon+ \int_S \phifix(x,y) d\mu^*(x,y)
\end{align*}
as we wanted to prove. Now, we prove the opposite inequality.
    Note that, for every $(\lambda, h) \in \D$ we have
    \begin{align*}
        \int h(s,r) \ d\pi_{S|S=s} (r) d\mu^*(s) \geq \int \ell(r,G) - \lambda c(s,r) \ d\pi_{S|S=s} (r) d\mu^*(s),\quad \forall \pi_{S|S}: S \rightarrow K_s.
    \end{align*}
     where in this case $K_{(x,y)}=\{\mu=\delta_x \otimes \mu_{Y|X=x},\ \mu_{Y|X} \in P(Y|X) \}$.
     Using this, we obtain that 
    \begin{align*}
        \inf_{(\lambda, h) \in \D} \lambda \varepsilon &+ \sup_{\pi_{S|S} : S \rightarrow K_s} \int_S \int_X \int_Y h(x,y,u,v)\ d\pi_{S|S=s} (r) d\mu^*(s) \\
        &\geq \inf_{(\lambda, h) \in \D} \lambda \varepsilon  + \sup_{\pi_{S|S} : S \rightarrow K_s}\int_S \int_X  \int_Y \ell(r,g) - \lambda c(s,r) d\pi_{S|S=s} (r) d\mu^*(s).
    \end{align*}
    Now, let us note that
    \begin{align*}
        \{(x,y) \mapsto \delta_x \otimes \delta_{\varphi(x,y,u)} : \varphi : S \times X \rightarrow Y \text{ measurable}\} \subset \{\pi_{S|S} : S \rightarrow K_s\}.
    \end{align*}
    Therefore, continuing the previous estimate, we get that 
     \begin{align*}
        \inf_{(\lambda, h) \in \D} \lambda \varepsilon &+ \sup_{\varphi:S \times X \rightarrow Y} \int_S \int_X \int_Y h(x,y,u,v)\ d\pi_{S|S=s} (r) d \mu^*(s) \\
        & \geq \inf_{(\lambda, h) \in \D} \lambda \varepsilon  + \sup_{\varphi :S \times X \rightarrow Y}\int_S  \ell(x,\varphi(s,x),G) - \lambda c(s,x,\varphi(s,x)) d \mu^*(x,y)\\
        & = \inf_{(\lambda, h) \in \D} \lambda \varepsilon  + \int_S   \sup_{\varphi :S\times X  \rightarrow Y} [\ell(x,\varphi(s,x),G) - \lambda c(s,x,\varphi(s,x))] d\mu^*(x,y)\\
        & = \inf_{(\lambda, h) \in \D} \lambda \varepsilon  + \int_S   \sup_{v \in Y} [\ell(x,v,G) - \lambda c(x,y,x,v)] d\mu^*(x,y)
        \end{align*}
    as we wanted to prove. Note that the first equality above is a consequence of the measurable selection principle (see, for example, \cite[Theorem 3A]{rockafellar_integral_1976}), since the loss is continuous.
\end{proof}

\begin{theorem}[Strong duality for perturbations in $P(X)$]\label{th:k=x}
Let $K_{(x,y)} = \{\mu_X \otimes \mu^*_{Y|X = x} : \mu_X \in P(X)\}$ and suppose all assumptions in Theorem~\ref{thm:w-padro-strong-duality} hold. Then

    \begin{align*}
        I_{\Okc} (G) &= \inf_{\lambda \geq 0} \lambda \varepsilon +  \int_{S} \phifix(x,y) d\mu^*(x,y), \\
        \text{where} \quad \phifix (x,y)&:= \sup_{u \in X} \left(\int_Y \ell (u, v, G) - \lambda c((x,y),(u,v))\ d\mu_{Y|X=u}^*(v) \right).
    \end{align*}
\end{theorem}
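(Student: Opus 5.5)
I will mirror the proof of Theorem~\ref{th:k=Y|X}: start from the general strong duality of Theorem~\ref{thm:w-padro-strong-duality} and prove the two inequalities separately. Throughout, write $s=(x,y)$ and $r=(u,v)$. I define the one-point objective
\begin{align*}
\psi_\lambda(s,u) := \int_Y \big(\ell(u,v,G)-\lambda c((x,y),(u,v))\big)\,d\mu^*_{Y|X=u}(v),
\end{align*}
so that $\phifix(s)=\sup_{u\in X}\psi_\lambda(s,u)$. Every admissible kernel has the form $\pi_{S|S=s}=\mu_X^s\otimes\mu^*_{Y|X}$ for some $s\mapsto\mu_X^s\in P(X)$. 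By Fubini, $\int h\,d\pi_{S|S=s}=\int_X\int_Y h(s,u,v)\,d\mu^*_{Y|X=u}(v)\,d\mu_X^s(u)$.

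For the upper bound ($\leq$), take $h(s,r):=\phifix(s)$, which is constant in $r$. For any admissible kernel,
\begin{align*}
\int\!\!\int (\ell-\lambda c)\,d\pi_{S|S=s}\,d\mu^*(s)=\int\!\!\int_X \psi_\lambda(s,u)\,d\mu^s_X(u)\,d\mu^*(s)\le \int \phifix\,d\mu^*,
\end{align*}
so $(\lambda,h)\in\D$. The sup over kernels of $\int h\,d\pi\,d\mu^*$ is then just $\int\phifix\,d\mu^*$, because $\pi_{S|S=s}$ is a probability measure. Plugging this family into Theorem~\ref{thm:w-padro-strong-duality} gives the upper bound. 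There is one technical point: $h$ must lie in $C(S\times S)$, so I need $\phifix$ continuous. Since $S$ is compact and $\ell$, $c$ are continuous, $\psi_\lambda$ is continuous in $s$ uniformly in $u$. Hence $\phifix$ is continuous in $s$ as a sup of an equicontinuous family, which is bounded on compact $S$. This argument does not need continuity in $u$, and so avoids any assumption on $u\mapsto\mu^*_{Y|X=u}$.

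For the lower bound ($\geq$), fix $(\lambda,h)\in\D$. The defining inequality of $\D$ bounds $\sup_{\pi}\int h\,d\pi\,d\mu^*$ from below by $\sup_{\pi}\int\!\int(\ell-\lambda c)\,d\pi\,d\mu^*$. I then restrict the sup to kernels with Dirac $X$-marginals, $\mu_X^s=\delta_{\varphi(s)}$ for measurable $\varphi:S\to X$. These belong to $K_s$ since $\delta_{\varphi(s)}\otimes\mu^*_{Y|X}=\delta_{\varphi(s)}\otimes\mu^*_{Y|X=\varphi(s)}$. This gives the lower bound $\sup_\varphi\int\psi_\lambda(s,\varphi(s))\,d\mu^*(s)$. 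Exchanging sup and integral yields $\int\phifix\,d\mu^*$, and taking the infimum over $(\lambda,h)$ completes the proof.

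The main obstacle is this exchange of sup and integral. Unlike Theorem~\ref{th:k=Y|X}, the integrand $\psi_\lambda$ involves the kernel $\mu^*_{Y|X=u}$, so it need not be continuous in $u$. I would first assume, or note it is implicit, that $u\mapsto\mu^*_{Y|X=u}$ is a weakly continuous (or at least measurable) kernel. Then $\psi_\lambda$ is jointly measurable, or even Carathéodory, and the measurable selection theorem \cite[Theorem 3A]{rockafellar_integral_1976} applies. If continuity fails, I would fall back on $\varepsilon$-optimal measurable selections, using a countable dense subset of the compact Polish space $X$. Pick $\varphi(s)$ as the first dense point attaining within $\eta$ of the sup, which is measurable, and let $\eta\to 0$. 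This step needs lower semicontinuity of $\psi_\lambda$ in $u$ for the dense sup to equal the full sup; failing that, I would use the universally measurable selection theorem.
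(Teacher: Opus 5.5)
Your proposal is correct and follows the paper's proof essentially step for step. For the upper bound you use the potential $h(s,r)=\phi_{\lambda,G}(s)$, constant in $r$; for the lower bound you restrict to kernels $\delta_{\varphi(s)}\otimes\mu^*_{Y|X=\varphi(s)}$ and then interchange supremum and integral by measurable selection, as the paper does via \cite[Theorem 3A]{rockafellar_integral_1976}. Your extra checks fill in details the paper leaves implicit: that $\phi_{\lambda,G}$ is continuous by uniform continuity of $c$ in $s$ (so $h\in C(S\times S)$), and that the selection step needs regularity of $u\mapsto\mu^*_{Y|X=u}$, such as lower semicontinuity of $\psi_\lambda$ in $u$ or joint measurability.
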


The proof is given in Appendix \ref{supp:p(x)-duality-proof}. The argument mirrors that of Theorem~\ref{th:k=Y|X}, the one difference being that the dependence of $\mu_{Y|X=u}^*$ on $u\in X$ prevents the supremum from being taken inside the integral.

One might hope to obtain explicit expressions for the dual potentials $h$ for any arbitrary admissible set $K_s$. However, the proofs in Theorems~\ref{th:k=p(s)}, \ref{th:k=Y|X}, and \ref{th:k=x}, rely on the assumption that $K$ is convex and contains the relevant Dirac measures, either on $X, Y$, or on $X\times Y$. Since the convex envelope of the set of Dirac measures $\{\delta_x:x \in X\}$ is already the entire probability space $P(X)$, any $K_s$ satisfying these requirements must be large enough to include $P(X)$ (or $P(Y)$ or $P(X\times Y)$, depending on the perturbation set). 

\section{Explicit Dual Representations and Connections to Regularization}\label{sec:explicit-forms}
The dual formulations in the previous section are not yet explicit enough to be interpretable or to reveal connections to classical regularization. We now derive explicit upper bounds on the worst-case risk for each perturbation space, assuming a quadratic loss and cost with a Lipschitz inverse operator. Throughout this section let $S:=X\times Y$ with $X=\R^n$ and $Y=\R^m$, let $\mathcal{G} = \text{Lip}(Y;X)$ with Lipschitz constant $L_G$, and define the loss $\ell(x,y,G):=\|G(y)-x\|^2_2$ and joint transportation cost $c_{X\times Y}((x,y),(u,v)):= \|(u,v)-(x,y)\|^2_2$ (see Remark~\ref{rem:strong-duality} for a note on the compactness of $S$). The bounds connect to Tikhonov-type regularization and, for structured perturbation sets, reveal how the perturbation class shapes the data-fidelity and regularization terms. 

\subsection{Perturbations in $P(X\times Y)$}\label{sec:joint-perturb}
The result below provides an explicit upper bound for the worst-case risk under Wasserstein perturbations when all joint distributions are admissible (i.e., $K_s=P(S)$). We will refer to this model as joint DRO, but note that the dual in Theorem \ref{th:k=p(s)} recovered the standard DRO formulation.

\begin{theorem}[Explicit form for perturbations in $P(S)$]\label{th:explicit-p(s)}
In addition to the assumptions in Theorem~\ref{thm:w-padro-strong-duality}, take $K_s=P(S)$ for every $s \in S$. Then
    \begin{align*}
        \inf_{G \in \mathcal{G}} I_{\Okc} (G) \leq \inf_{G \in \mathcal{G}} \left(\sqrt{\int_S \|G(y)-x\|^2_2\ d\mu^*(x,y)} + \sqrt{\varepsilon(1+L_G^2)} \right)^2.
    \end{align*}
\end{theorem}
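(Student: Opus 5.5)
Fix $G\in\mathcal{G}$; since the claimed bound is an inequality between infima over $G$, it suffices to bound $I_{\Okc}(G)$ for each fixed $G$. By Theorem~\ref{th:k=p(s)},
\begin{align*}
I_{\Okc}(G)=\inf_{\lambda\ge 0}\ \lambda\varepsilon+\int_S \phifix(x,y)\,d\mu^*(x,y),
\end{align*}
so the plan is to bound $\phifix$ pointwise by an explicit function of the residual $a(x,y):=\|G(y)-x\|_2$, then integrate and optimize over $\lambda$.

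For the pointwise bound, write $(u,v)=(x+\xi,y+\eta)$. The Lipschitz property of $G$ and the triangle inequality give
\begin{align*}
\|G(v)-u\|_2\le a+L_G\|\eta\|+\|\xi\|\le a+\sqrt{1+L_G^2}\,\|(\xi,\eta)\|,
\end{align*}
where the last step is Cauchy--Schwarz. Set $t=\|(\xi,\eta)\|$ and $\kappa=1+L_G^2$. Then
\begin{align*}
\phifix(x,y)\le\sup_{t\ge 0}\big(a+\sqrt{\kappa}\,t\big)^2-\lambda t^2 .
\end{align*}
For $\lambda>\kappa$ this one-dimensional concave maximization is elementary: the maximizer is $t^\ast=a\sqrt{\kappa}/(\lambda-\kappa)$, and the value is $a^2\lambda/(\lambda-\kappa)$. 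For $\lambda\le\kappa$ the bound is $+\infty$, so only $\lambda>\kappa$ needs to be considered.

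Integrating against $\mu^*$ and writing $R:=\int_S \|G(y)-x\|_2^2\,d\mu^*$, we get
\begin{align*}
I_{\Okc}(G)\le\inf_{\lambda>\kappa}\ \lambda\varepsilon+\frac{\lambda R}{\lambda-\kappa}.
\end{align*}
Substitute $\lambda=\kappa+\tau$ with $\tau>0$, so the objective becomes $\kappa\varepsilon+R+\tau\varepsilon+\kappa R/\tau$. Minimizing over $\tau$ gives $\tau=\sqrt{\kappa R/\varepsilon}$ and the value
\begin{align*}
\kappa\varepsilon+R+2\sqrt{\kappa\varepsilon R}=\big(\sqrt R+\sqrt{\varepsilon\kappa}\big)^2 .
\end{align*}
This is exactly the claimed expression. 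The degenerate case $R=0$ is handled by letting $\tau\to 0$. Taking the infimum over $G$ finishes the argument.

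The main obstacle is justifying the use of Theorem~\ref{th:k=p(s)} in this setting, because $S=\R^n\times\R^m$ is not compact. I would either invoke Remark~\ref{rem:strong-duality}, or observe that only weak duality is needed for an upper bound. Under the inequality in Proposition~\ref{prop:weakduality}, the choice $h(s,r)=\phifix(s)$ belongs to $\D$ whenever it is finite, and the same computation then goes through. The pointwise sup estimate itself is routine.
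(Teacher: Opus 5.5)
Your proposal is correct and follows the paper's proof essentially step by step. Both use the dual from Theorem~\ref{th:k=p(s)}, the triangle/Lipschitz/Cauchy--Schwarz estimate with constant $\sqrt{1+L_G^2}$, the one-dimensional maximization giving $\tfrac{\lambda}{\lambda-(1+L_G^2)}\|G(y)-x\|_2^2$, and the same optimization over $\lambda$.

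Your observation that only the weak-duality direction is needed is a worthwhile addition: integrating $\ell(r,G)\le \phi_{\lambda,G}(s)+\lambda c(s,r)$ gives the upper bound directly. This removes the reliance on strong duality, which the paper invokes for the non-compact $S=\R^n\times\R^m$ only via a remark.
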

\begin{proof}
First, apply Theorems~\ref{thm:w-padro-strong-duality} and \ref{th:k=p(s)} to obtain:
\begin{align}
    I_{\Okc} (G)&= \inf_{\lambda \geq 0} \lambda \varepsilon + \int_S \phifix (x,y)\ d\mu^*(x, y) \label{eq:dual-p(s)}, \\
    \text{where} \quad \phifix (x,y) &= \sup_{(u,v) \in S} \Big( \|G(v)-u\|^2 - \lambda(\|(u-x,v-y)\|^2)\Big).
\end{align}
Write $r:=G(y)-x$ and $(\Delta_u, \Delta_v)= (u-x,v-y)$. By the triangle inequality, Lipschitz condition, and Cauchy-Schwarz,
\begin{align*}
    \|G(v)-u\|_2 &= \|r+(G(v)-G(y))-\Delta_u\|_2  \\
    &\leq \|r\|_2 + L_G\|\Delta_v\|_2 + \|\Delta_u\|_2 \leq \|r\|_2 + \sqrt{1+L_G^2} \cdot \|(\Delta_u,\Delta_v)\|_2.
\end{align*}
Writing $t := \|(\Delta_u,\Delta_v)\|_2 \geq 0$ and $L := 1 + L_G^2$, the supremum is bounded by $\sup_{t \geq 0}((\|r\|_2 + \sqrt{L}\,t)^2 - \lambda t^2)$, which is finite for $\lambda > L$ and gives
\begin{align}\label{eq:phi-p(s)-bound}
    \phifix(x,y) \leq \|r\|_2^2 + \frac{L}{\lambda - L}\|r\|_2^2 = \frac{\lambda}{\lambda - L}\|r\|_2^2.
\end{align}
Substituting \eqref{eq:phi-p(s)-bound} into \eqref{eq:dual-p(s)} with $R := \int_S \|G(y)-x\|_2^2\, d\mu^*$ gives $I_{\Okc}(G) \leq \inf_{\lambda > L} \lambda\varepsilon + \frac{\lambda}{\lambda-L}R$. The first-order condition yields the minimizer $\lambda^* = L + \sqrt{LR/\varepsilon}$, and substituting it back gives $I_{\Okc}(G) \leq \left(\sqrt{R} + \sqrt{\varepsilon(1+L_G^2)}\right)^2$. Taking the infimum over $G$ on both sides yields the result.
\end{proof}

\subsection{Perturbations in $P(Y|X)$}\label{sec:reg-y|x}
We now find an explicit form for perturbations that affect only the conditional distribution of the measurement variable $Y$ given the input $X$.
   
\begin{theorem}[Explicit form for perturbations in $P(Y|X)$]\label{th:explicit-p(Y|X)}
Under the assumptions of Theorem \ref{th:k=Y|X}, 
\begin{align*}
    \inf_{G \in \mathcal{G}} &I_{\Okc}(G) \leq \inf_{G \in \mathcal{G}} \left(\sqrt{\int_S \|G(y)-x\|^2_2\ \,d\mu_{}^*(x,y)} 
    + L_G\sqrt{\varepsilon} \right)^2.
\end{align*}
\end{theorem}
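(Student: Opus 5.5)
The plan is to mirror the proof of Theorem~\ref{th:explicit-p(s)}, now starting from the dual representation of Theorem~\ref{th:k=Y|X}. Since the adversary may only move the measurement coordinate, the bound should lose the factor $1+L_G^2$ and keep only $L_G^2$. Concretely, Theorems~\ref{thm:w-padro-strong-duality} and \ref{th:k=Y|X} give
\begin{align*}
    I_{\Okc}(G) = \inf_{\lambda \geq 0} \lambda\varepsilon + \int_S \phifix(x,y)\,d\mu^*(x,y), \qquad \phifix(x,y) = \sup_{v \in Y}\Big(\|G(v)-x\|_2^2 - \lambda\|v-y\|_2^2\Big).
\end{align*}
Here I use that the joint cost satisfies $c((x,y),(x,v)) = \|v-y\|_2^2$, because the $X$-component is not transported.

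Next, fix $(x,y)$ and set $r := G(y)-x$ and $\Delta_v := v-y$. The triangle inequality and the Lipschitz property of $G$ give
\begin{align*}
    \|G(v)-x\|_2 \leq \|r\|_2 + \|G(v)-G(y)\|_2 \leq \|r\|_2 + L_G\|\Delta_v\|_2 .
\end{align*}
With $t := \|\Delta_v\|_2$ this yields $\phifix(x,y) \leq \sup_{t\geq 0}\big((\|r\|_2 + L_G t)^2 - \lambda t^2\big)$. For $\lambda > L_G^2$ this one-variable concave quadratic is maximized at $t = L_G\|r\|_2/(\lambda - L_G^2)$, which gives
\begin{align*}
    \phifix(x,y) \leq \frac{\lambda}{\lambda - L_G^2}\,\|r\|_2^2 .
\end{align*}
This is exactly \eqref{eq:phi-p(s)-bound} with $L$ replaced by $L_G^2$.

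Integrating against $\mu^*$ and writing $R := \int_S\|G(y)-x\|_2^2\,d\mu^*$, I restrict the infimum to $\lambda > L_G^2$, which is allowed since it can only increase an infimum. This gives $I_{\Okc}(G) \leq \inf_{\lambda > L_G^2} \lambda\varepsilon + \frac{\lambda}{\lambda - L_G^2} R$. The first-order condition gives $\lambda^* = L_G^2 + L_G\sqrt{R/\varepsilon}$. Substituting, and writing $\lambda = L_G^2 + s$ so that the objective becomes $L_G^2\varepsilon + s\varepsilon + R + L_G^2R/s$, the minimum equals $(\sqrt{R} + L_G\sqrt{\varepsilon})^2$. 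Taking the infimum over $G \in \mathcal{G}$ on both sides concludes the argument.

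The main obstacle is the set of degenerate cases in the optimization over $\lambda$.
\begin{enumerate}
    \item If $L_G = 0$, then $G$ is constant, so $\phifix(x,y) = \|r\|_2^2$ exactly for every $\lambda \geq 0$. Letting $\lambda \to 0$ gives $I_{\Okc}(G) \leq R$, which matches the claimed bound.
    \item If $R = 0$, the minimizer degenerates to $\lambda^* = L_G^2$, on the boundary of the admissible range. Here I take $\lambda \downarrow L_G^2$: the $R$ term vanishes, and the bound tends to $L_G^2\varepsilon$, which matches.
\end{enumerate}
A second point needing care is the use of the dual formula on the non-compact space $\R^n \times \R^m$. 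I will handle this as in Theorem~\ref{th:explicit-p(s)}, by appealing to Remark~\ref{rem:strong-duality}; alternatively, one only needs weak duality restricted to kernels of this type, which holds directly.
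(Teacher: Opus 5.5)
Your proposal is correct and follows essentially the same route as the paper: you use the dual formula from Theorem~\ref{th:k=Y|X}, a pointwise Lipschitz bound giving $\phifix(x,y)\le \frac{\lambda}{\lambda-L_G^2}\|G(y)-x\|_2^2$ for $\lambda>L_G^2$, and then optimize over $\lambda$ to obtain $(\sqrt{R}+L_G\sqrt{\varepsilon})^2$. The differences are minor and in your favour: you bound $\|G(v)-x\|_2$ by the triangle inequality (as the paper does for Theorem~\ref{th:explicit-p(s)}) rather than expanding the square and aligning $\delta_\Delta$ with $r$, you treat the degenerate cases $L_G=0$ and $R=0$ explicitly, and you note that weak duality suffices for an upper bound, which avoids the compactness issue on $\R^n\times\R^m$ that the paper leaves implicit.
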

\begin{proof}
For our choices of loss and cost, the dual representation of $I_{\Okc}(G)$ is
\begin{align}
        I_{\Okc} (G) &= \inf_{\lambda \geq 0} \lambda \varepsilon + \int_{S} \phifix(x,y) d\mu^*(x,y), \label{eq:I-Y|X} \\
        \text{where} \quad \phifix(x,y) &= \sup_{v \in Y} (\|G(v)-x\|^2 -\lambda\|y-v\|^2)
    \end{align}
by Theorems~\ref{thm:w-padro-strong-duality} and \ref{th:k=Y|X}. Introduce the variable change $\Delta := v - y$ and $r := G(y) - x$. Let $\delta_\Delta := G(v) - G(y)$, so that $\|\delta_\Delta\| \leq L_G\|\Delta\|$ by the Lipschitz condition. Then
\begin{align*}
    \|G(v) - x\|^2\  
    &= \|r + \delta_\Delta\|^2  
    =  \|r\|^2\ 
    + 2 r^T \delta_\Delta 
    + \|\delta_\Delta\|^2.
\end{align*}
Since $\|\delta_\Delta\|^2 \leq L_G^2\|\Delta\|^2$, we bound the supremum over $v$ by 
optimizing over $\delta_\Delta \in \R^n$ and $\Delta \in Y$ subject to $\|\delta_\Delta\| \leq L_G\|\Delta\|$, which gives
\begin{align*}
    \sup_{\Delta \in Y,\, \|\delta_\Delta\| \leq L_G\|\Delta\|}\left\{ \|r\|^2  + 2r^T\!\delta_\Delta  + \|\delta_\Delta\|^2 - \lambda\|\Delta\|^2 \right\}.
\end{align*}
Substituting $\delta_\Delta = L_G \Delta \cdot \hat{n}$ for the optimal unit direction $\hat{n}=r/\|r\|$ and maximizing the resulting concave quadratic in $\|\Delta\|$, the supremum is finite for $\lambda > L_G^2$ and yields
\begin{align}\label{eq:phi-Y|X-bound}
    \phifix(x,y) \leq \|r\|^2\  + \frac{L_G^2}{\lambda - L_G^2} \left\| r\right\|^2.
\end{align}
Substituting \eqref{eq:phi-Y|X-bound} into \eqref{eq:I-Y|X} and writing $R := \int_S \|G(y)-x\|^2\, d\mu^*$ gives
\begin{align}\label{eq:bound-y|x-lambda}
    I_{\Okc}(G) \leq \inf_{\lambda > L_G^2}\ \lambda\varepsilon + R + \frac{L_G^2}{\lambda - L_G^2} R.
\end{align}
The first-order condition $\varepsilon = L_G^2 R/(\lambda - L_G^2)^2$ gives the minimizer $\lambda^* = L_G^2 + L_G\sqrt{R}/\sqrt{\varepsilon}$, and substituting it into \eqref{eq:bound-y|x-lambda} yields $I_{\Okc}(G) \leq (\sqrt{R} + L_G\sqrt{\varepsilon})^2$. Taking the infimum over $G$ on both sides yields the stated result.
\end{proof}

In the bound of Theorem~\ref{th:explicit-p(Y|X)}, we recognize a data-fidelity term together with a Tikhonov-type penalty: it penalizes reconstructions with a large Lipschitz constant $L_G$, and scales with the Wasserstein radius $\varepsilon$. Thus, it suppresses the directions along which the inverse most amplifies noise. For a linear inverse $G$, this would correspond to penalizing its largest singular value.

Comparing with the joint bound of Theorem~\ref{th:explicit-p(s)}, restricting perturbations to $P(Y|X)$ replaces the factor $\sqrt{1+L_G^2}$ by $L_G$. This removes the unit contribution that, in the joint case, corresponds to the adversary's freedom to transport mass in the $X$-direction. Fixing the $X$-marginal suppresses this component of the penalty. When $L_G\gg 1$ and the bounds use the same cost and loss functions, the two bounds will converge. Otherwise, the $P(Y|X)$ bound will lie below the joint one. 

The same form arises in \cite{blanchet_robust_2019}, who enforce the conditional structure in a different manner. Instead of restricting the admissible set, they keep $K_s=P(S)$ and instead modify the transportation cost to
\begin{align}\label{eq:cond-cost}
    c_{Y|X}((x,y), (u,v)) = \begin{cases}
        \|y - v\|^2_2, &\text{ if } x=u,  \\
        \infty, &\text{ else.}
    \end{cases}
\end{align}
which assigns infinite cost to transport that perturbs the input, thus allowing perturbations only in the measurement space. The two formulations impose the same constraint in different ways. Our method imposes it at the level of the ambiguity set, removing the inadmissible measures from $K_s$, while \cite{blanchet_robust_2019} imposes it at the level of the cost, keeping these measures in $K_s$ but making them infeasible. Repeating the proof of Theorem~\ref{th:explicit-p(s)} with the cost~\eqref{eq:cond-cost} in place of $c_{X\times Y}$ yields the same bound as in Theorem~\ref{th:explicit-p(Y|X)}. Thus, the two formulations recover the same explicit upper bound on the worst-case risk. Reducing $X$ and $Y$ to $\R^n$ and $\R$ and assuming $G$ to be linear, turns the inequalities in the proof into equalities and recovers the exact estimator of \cite{blanchet_robust_2019}, an $L_2$-regularized form of the mean-squared-error estimator. Replacing the squared norm by an $L_p$-cost generalizes this to $L_q$ regularization with $q\in(1,\infty]$ and $1/p+1/q=1$. In this sense ,our structured framework contains the standard Wasserstein DRO formulation as the special case $K_s=P(S)$ with a conditional norm-based cost.

This observation is not surprising. For $P(Y|X)$ perturbations, it is easy to check that our ambiguity set simply constrains perturbations in the $\varepsilon$-ball of the conditional Wasserstein distance between $\mu_X^* \otimes \mu^*_{Y|X}$ and $\mu_X^* \otimes \mu_{Y|X}$ \cite{chemseddine_conditional_2025}. On the same line, the corollary below shows that the cost~\eqref{eq:cond-cost} induces a conditional Wasserstein distance, which clarifies in what sense it enforces the conditional constraint.

\begin{corollary}\label{cor:w-cond}
Let the cost function $c_{Y|X}$ be defined as above and $X \subset \R^n$ and $Y \subset \R^m$ to be compact sets. Then the corresponding Wasserstein distance on $X \times Y$ coincides with the \emph{conditional Wasserstein distance}
\begin{align*}
    W_{\rm cond}(\mu^*, \mu) := \int_X W(\mu_{Y|X=x}^*, \mu_{Y|X=x}) d\mu_X^*(x)
\end{align*}
with cost $c_Y(y,v):=\|y-v\|^2_2$.  
More precisely, defining  $W_{c_{Y|X}}(\mu^*, \mu)$ as the  Wasserstein distance with cost $c_{Y|X}$ we have:
\begin{enumerate}[label=\roman*)]
    \item $W_{c_{Y|X}}(\mu^*, \mu)< \infty$ if and only if $\mu_X^* = \mu_X$;
    \item in this case, $W_{c_{Y|X}}(\mu^*, \mu) = W_{\rm cond}(\mu^*, \mu)$.
\end{enumerate}
\end{corollary}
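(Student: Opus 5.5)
The plan is to work directly with couplings. For part i), take any $\pi \in \Pi(\mu^*,\mu)$ with $\int c_{Y|X}\,d\pi<\infty$. Then $\pi$ is concentrated on the closed set $D:=\{((x,y),(u,v)) : x=u\}$, since $c_{Y|X}=\infty$ off $D$. Pushing $\pi$ forward under the two $X$-projections therefore gives the same measure. These pushforwards are $\mu_X^*$ and $\mu_X$, so $\mu_X^*=\mu_X$. Conversely, if $\mu_X^*=\mu_X$, I will build the coupling
\begin{align*}
\pi := \int_X \delta_{x}\otimes\delta_x \otimes (\mu^*_{Y|X=x}\otimes \mu_{Y|X=x})\, d\mu_X^*(x),
\end{align*}
suitably ordered as a measure on $(X\times Y)^2$. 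It has marginals $\mu^*$ and $\mu$ and lives on $D$. Compactness of $Y$ makes $\|y-v\|^2$ bounded, so its cost is finite. This proves i).

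For ii), assume $\mu_X^*=\mu_X=:\nu$. I will prove the two inequalities separately.

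\emph{Step $\geq$.} Take any finite-cost $\pi$. It lives on $D$, so it can be viewed as a measure $\tilde\pi$ on $X\times Y\times Y$ whose $X$-marginal is $\nu$. Disintegrating with respect to $x$ gives $\tilde\pi=\nu\otimes\tilde\pi_x$. For $\nu$-a.e.\ $x$, the kernel $\tilde\pi_x\in P(Y\times Y)$ has marginals $\mu^*_{Y|X=x}$ and $\mu_{Y|X=x}$; this follows from uniqueness of disintegration applied to the $(x,y)$ and $(x,v)$ marginals. Hence
\begin{align*}
\int c_{Y|X}\,d\pi=\int_X\int\|y-v\|^2 d\tilde\pi_x\, d\nu(x)\geq \int_X W(\mu^*_{Y|X=x},\mu_{Y|X=x})\,d\nu(x).
\end{align*}
Taking the infimum over $\pi$ gives $W_{c_{Y|X}}\geq W_{\rm cond}$.

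\emph{Step $\leq$.} For each $x$, choose an optimal plan $\gamma_x\in\Pi(\mu^*_{Y|X=x},\mu_{Y|X=x})$ for the cost $c_Y$; one exists because $Y$ is compact and $c_Y$ is continuous. Then glue: set $\pi:=\int\delta_x\otimes\delta_x\otimes\gamma_x\,d\nu(x)$, ordered as before. This is an admissible coupling, and its cost equals $W_{\rm cond}$.

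The main obstacle is that $x\mapsto\gamma_x$ must be measurable for $\pi$ to be well defined. I would get this from a measurable selection theorem, namely \cite[Theorem 3A]{rockafellar_integral_1976} as used in Theorem~\ref{th:k=Y|X}, or equivalently from Villani's measurable selection of optimal plans \cite{villani_optimal_2009}. Two facts feed into it. First, $x\mapsto(\mu^*_{Y|X=x},\mu_{Y|X=x})$ is measurable into $P(Y)^2$. Second, the set-valued map sending a pair of marginals to its optimal plans has closed graph and compact values in $P(Y\times Y)$.

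If the selection argument turns out to be delicate, I will fall back on $\eta$-optimal plans. I would take a countable dense family of plans, choose measurably an index that is $\eta$-optimal, and let $\eta\to0$. As a further alternative, the Kantorovich dual with measurable dependence on $x$ would give the $\leq$ inequality as well.

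Both inequalities hold for every finite-cost situation, so combining them gives ii).
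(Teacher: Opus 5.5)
Your proposal is correct and is essentially the paper's argument: finite-cost plans are concentrated on $\{x=u\}$, which gives i), and ii) follows from gluing $x$-wise optimal plans for one inequality and from disintegrating a finite-cost plan in $x$ for the other. Your version is somewhat more careful than the paper's in three ways: you avoid assuming that an optimal plan for $W_{c_{Y|X}}$ exists, you give an explicit finite-cost coupling for the converse in i), and you address the measurability of $x\mapsto\gamma_x$, which the paper passes over (Villani's measurable selection of optimal plans settles it, whereas your countable-dense-family fallback would not, since such a family need not contain couplings with the prescribed $x$-dependent marginals).
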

The proof can be found in Appendix \ref{sec:app-cond-wass}.

By part~(i), a finite transport cost forces $\mu_X=\mu_X^*$, so the infinite-cost in \cite{blanchet_robust_2019} fixes the $X$-marginal in the same way that we do directly through $K_s$. By part~(ii), the resulting distance is the conditional Wasserstein distance $W_\text{cond}$, which compares the conditionals $\mu_{Y|X}$ for each $x$ separately and averages over the fixed input distribution. The two formulations thus fix the same marginal and give the same explicit upper bound, but use different distances on the admissible conditional distributions. We remind the reader that for any $\mu^*,\mu\in P(X\times Y)$ with a common $X$-marginal it holds that $W(\mu_{X\times Y}^* ,\mu_{X\times Y}) \leq W_\text{cond} (\mu_{X\times Y}^* ,\mu_{X\times Y})$, where the inequality can be strict \cite{chemseddine_conditional_2025}. 

\subsection{Perturbations in $P(X)$ and $P(X|Y)$}
We consider perturbations in $P(X)$ by considering $K_{(x,y)}=\{\mu =\mu_X\otimes \mu_{Y|X}^*,\ \mu_X \in P(X)\}$. By Theorems~\ref{thm:w-padro-strong-duality} and~\ref{th:k=x}, the dual potential takes the form $\phifix (x,y) = \sup_{u \in X} \int_Y \ell(u,v,G) - \lambda c((x,y),(u,v))\ d\mu^*_{Y|X=u}(v)$. Unlike the $P(Y|X)$ setting, both the loss and the measure $\mu_{Y|X}^*$ depend on $u$, so the supremum does not reduce to a concave quadratic in a single variable. Consequently, no tractable closed form for $I_{\Okc}$ appears obtainable without additional assumptions on the family $\{\mu_{Y|X=u}^*\}_{u \in X}$.

Since no closed form is available for $P(X)$-perturbations, one could instead consider perturbations in $P(X|Y)$ by considering the ambiguity set $K_{(x,y)}=\{\mu =\delta_y \otimes \mu_{X|Y=y},\ \mu_{X|Y} \in P(X|Y)\}$. By Theorems~\ref{thm:w-padro-strong-duality} and~\ref{th:k=Y|X}, the dual potential is $\phifix(x,y)=\sup_{u \in X}\{\|G(y)-u\|^2-\lambda\|x-u\|^2\}$. In contrast to the $P(X)$ case, fixing the $Y$-marginal keeps $y$ and $G(y)$ fixed inside the supremum, leaving no integral over a conditional, so it reduces to a concave quadratic in $u$ for $\lambda>1$. Since the perturbed variable is now directly present in the loss rather than through $G$, the argument of Theorem~\ref{th:explicit-p(Y|X)} carries over with $L_G$ replaced by $1$, yielding
\begin{align*}
   \inf_{G \in \mathcal{G}} I_{\Okc}(G) \leq \inf_{G \in \mathcal{G}}
   \left( \sqrt{\int_S \|G(y)-x\|^2\, d\mu^*}
   + \sqrt{\varepsilon} \right)^2.
\end{align*}
The bound mirrors Theorem~\ref{th:explicit-p(Y|X)} but the regularization term carries no factor of $L_G$ yielding a weaker, structurally uniform penalty.

\section{Comparison of DRO Methods}\label{sec:comparison}
We validate the findings of Section~\ref{sec:reg-y|x} on two inverse problems of increasing ill-posedness. For each we fix an approximate inverse $G$ and study how the joint DRO bound (Theorem~\ref{th:explicit-p(s)}) and the $P(Y|X)$ bound (Theorem~\ref{th:explicit-p(Y|X)}) behave as the Lipschitz constant $L_G$ grows:
\begin{align*}
    \text{joint:}\quad \Big(F + \sqrt{\varepsilon(1+L_G^2)}\Big)^2, \qquad
    P(Y|X):\quad \Big(F + L_G\sqrt{\varepsilon}\Big)^2,
\end{align*}
where $F = \sqrt{\int\|G(y)-x\|^2\,d\mu^*}$ is the shared fidelity term, so that the two bounds differ only in the regularization.

\subsection{Problem Setup}
In both experiments we draw $N_x = 500$ samples for $x$ and, for each $x$, $N_u = 150$ samples of $u$, all drawn from a dataset of $5000$ points generated by $\mu_X^* = \mathcal{N}(\bar x, \sigma_X^2 I)$ with $\sigma_X = 0.1$ and $\bar x(t) = \sin(3\pi t)$ on the grid of each problem. Measurements are noiseless, $y = H(x)$ with $H : \mathbb{R}^{100} \to \mathbb{R}^{100}$, so that $F$ reflects only the misfit between $G$ and the true inverse, isolating the effect of $L_G$. We set $\varepsilon = 1.0$ and compare the bounds through $\text{gap}(\%) = 100 \cdot (\sqrt{\text{joint}}-\sqrt{P(Y|X)})/\sqrt{\text{joint}} = 100 \cdot \sqrt{\varepsilon}\left(\sqrt{1+L_G^2} - L_G\right)/(F + \sqrt{\varepsilon(1+L_G^2)}),$ which we study not by varying $L_G$ directly but through a parameter controlling each problem's ill-posedness.


The \emph{diagonal operator} $(Hx)_k = \sigma_k x_k$, $\sigma_k = (1+k)^{-p}$, $k = 0, \ldots, 99$, has polynomially decaying singular values and exact inverse $G(y) = y/\sigma_k$. The reconstruction is then perfect, so $F = 0$, and $L_G = \max_k \sigma_k^{-1} = 100^p$. We sweep $p \in [0.01, 1]$. The \emph{backward heat equation} on $t \in [-1,1]$ at diffusion time $T_\mathrm{diff} = 0.05$ has a forward operator
\begin{align*}
    (Hx)(t_k) = (x * \mathcal{K})(t_k)\,\Delta t, \quad \mathcal{K}(t) = \frac{1}{2\sqrt{\pi T_\mathrm{diff}}} \exp\left(-\frac{t^2}{4T_\mathrm{diff}}\right), 
\end{align*}
and the signal is sampled on a uniform grid of $100$ points. The approximate inverse is the Tikhonov-regularized Fourier deconvolution $G(y) = \mathcal{F}^{-1}\big(\hat{\mathcal{K}}\,\hat{y}/(\lvert\hat{\mathcal{K}}\rvert^2 + \alpha)\big)$, $\alpha = 5\times 10^{-4}$, as approximate inverse. Here the regularization makes $G$ inexact, so $F > 0$ even without noise, and $L_G = \tfrac{1}{2\sqrt{\alpha}}$. We sweep $\alpha \in [10^{-5}, 10^{-1}]$.

\subsection{Results}
\begin{figure}[tp]
    \centering
    \includegraphics[width=0.8\linewidth]{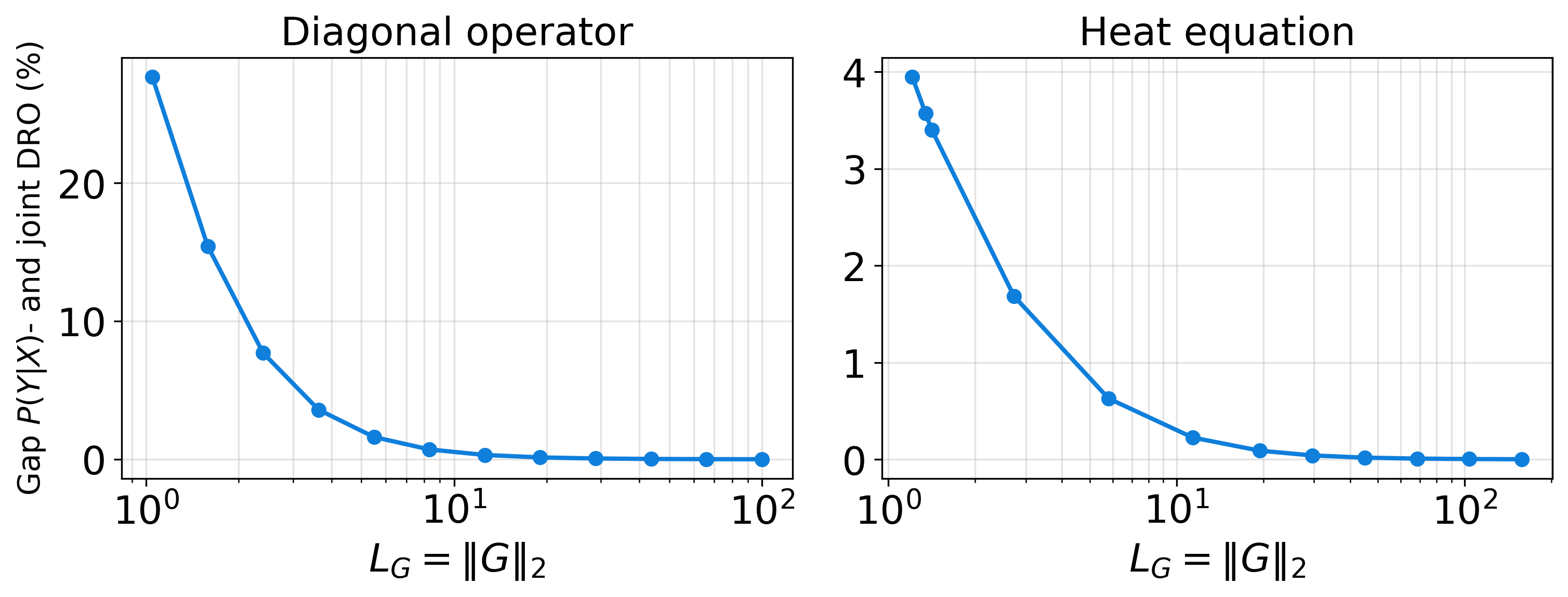}
    \caption{Percentage gap between the joint and $P(Y|X)$-DRO bounds as the Lipschitz constant $L_G$ of the fixed inverse grows, for the diagonal operator (left) and the backward heat equation (right). The gap is largest at small $L_G$ and closes monotonically, since the $P(Y|X)$ bound replaces $\sqrt{1+L_G^2}$ by $L_G$. The two problems start at nearly the same $L_G$ but different gaps ($27.6\%$ vs.\ $3.9\%$) because the heat-equation inverse has nonzero fidelity $F$, which enters only the denominator.}
    \label{fig:gap}
\end{figure}

Figure~\ref{fig:gap} shows the gap for both problems. It is largest at small $L_G$ and closes monotonically as $L_G$ grows, since the $P(Y|X)$ bound replaces $\sqrt{1+L_G^2}$ by $L_G$. The diagonal operator starts at $27.6\%$ and the heat equation at $3.9\%$, even though both begin at nearly the same $L_G$ ($1.05$ and $1.21$). The difference comes from the fidelity $F$, which is only present in the denominator: the exact diagonal inverse has $F = 0$ and attains the maximal gap at that $L_G$, whereas the heat-equation inverse has a large $F$ that dilutes the same absolute regularization difference. The extra conservatism of joint DRO is thus due entirely to the $+1$ in $\sqrt{1+L_G^2}$, significant only when $L_G$ is small. Once $L_G$ is large, both terms are dominated by $L_G\sqrt{\varepsilon}$ and the bounds nearly coincide. Since $L_G\sqrt{\varepsilon} \le \sqrt{\varepsilon(1+L_G^2)}$ for every $L_G$, $P(Y|X)$-DRO is never larger than joint DRO and strictly smaller at finite $L_G$, most noticeably for well-posed problems. For severely ill-posed problems or large reconstruction error, it adds little.
\section{Applications to Inverse Problems}
In this section, we show examples of implementations of the framework across inverse problems of increasing ill-posedness, as well as a robust approximation/simulator of an operator to demonstrate the flexibility of the framework beyond inverse problems. All examples consider perturbations in $P(Y|X)$, except the robust simulator, which perturbs $P(X|Y)$. In all experiments, out-of-distribution performance is evaluated by testing on distributional shifts unseen during training. An additional example validating robustness in $P(X\times Y)$ is provided in Appendix \ref{supp:additional-examples}. The code will be available upon publication.

\subsection{Computational Framework} In this subsection, we describe the framework used to implement structured DRO for inverse problems.

\subsubsection{Model}\label{sec:s-duality}
To address computational challenges and improve optimization properties, we consider the entropy-regularized Wasserstein distance $W^\delta$ \cite{peyre_computational_2019, cuturi_sinkhorn_2013} (also known as the Sinkhorn distance) defined as 
\begin{align*}
    W^\delta (\mu_1, \mu_2) := \inf_{\pi \in \Pi(\mu_1, \mu_2)} \int_{S \times S} c(s,r) + \delta \log \left( \frac{d\pi}{d\eta} \right) d\pi(s,r) 
\end{align*}
where $\delta > 0$ is a regularization parameter, $\mu_1,\mu_2 \in P(S)$ and $\eta \in P(S \times S)$ a chosen reference measure such that $\eta = \eta_1\otimes \eta_2$ and $\mu_1 \ll \eta_1,\ \mu_2\ll \eta_2$.  

We approximate the structured $P(Y|X)$-DRO objective by the conditional Sinkhorn DRO of \cite{wang_sinkhorn_2026}. Concretely, we use the Sinkhorn distance with cost $c=c_{Y|X}$ defined in \eqref{eq:cond-cost}. As outlined in Section \ref{sec:reg-y|x}, this is equivalent to a Sinkhorn regularization of the $P(Y|X)$-DRO.

We choose the reference measure $\eta(s,r)=\mu^*(s)\otimes\eta_{S|S=s}(r)$ so that the primal problem becomes
\begin{align*}
    \Ocd &:= \Big\{\pi_{S|S=s} \in P(S) \text{ for } \mu^*\text{-a.e } s\in S:\\
    &\qquad\int_S \int_S c_{Y|X}(s,r) + \delta \log\left(\frac{d\pi_{S|S=s}}{d\eta_{S|S=s}}\right) d\pi_{S|S=s}(r) d\mu^*(s) \leq \varepsilon\Big\}\text{ and }  \\
    I(\pi_{S|S}, G)&:=\int_S\int_S\ell(r,G)\ d\pi_{S|S=s}(r)d\mu^*(s), \quad I= \min_{G \in \mathcal{G}} \min_{\pi_{S|S} \in \Ocd} I(\pi_{S|S},G).
\end{align*}
The loss is given by the mean squared error between the target and the predicted reconstruction, i.e. $\ell(x,y;G)=\|x-G(y)\|^2_2$.  

We further specify the reference measure as $\eta_{S|S=(x,y)}(u,v)=\mu_X^*(u)\otimes\eta_{Y|X=x}(v)$ with $\eta_{Y|X=x}$ a Lebesgue measure. By \cite{wang_sinkhorn_2026}, this reduces to the dual formulation 
\begin{align}\label{eq:sinkhorn-dual}
    I & = \inf_{G\in \mathcal{G}} \inf_{\lambda \geq 0} \lambda \hat \varepsilon \nonumber \\
    & \qquad +  \lambda\delta\int_{S} \log \left[ \int_X\int_Y \exp\left(\frac{\|u-G(v)\|^2_2}{\lambda \delta}\right) d\mathcal{N}(Hx, \delta^2I)(v)\ d\mu_X^*(u)\right] d\mu^*(x,y)
\end{align}
with $\hat \varepsilon = \varepsilon + \delta \int\log\int\exp(-c(s,r)/\delta)\ d\eta_{S|S=s} (r) d\mu^*(s)$ an adjusted Wasserstein radius.

\subsubsection{Algorithm}
We follow \cite{wang_sinkhorn_2026} to compute the Sinkhorn DRO objective, with the optimal $\lambda$ determined via bisection search. At each candidate $\lambda$, the corresponding optimal $G$ is obtained using a Biased Stochastic Mirror Descent (BSMD) algorithm with gradient updates approximated by a stochastic gradient (SG) estimator. The bisection interval is updated by comparing objective values at the midpoint and boundaries, and terminates once the interval width falls below a pre-specified tolerance.

\subsubsection{Experimental Setup}
Throughout, training data consists of noise-free paired samples $(x_i, y_i)\in X\times Y$ generated via a known forward operator $H$ as $y_i = Hx_i$. The models are trained using Adam and an initial learning rate $\gamma$. During bisection optimization, the learning rate remains constant. The final model is trained using a scheduler with linear warmup from $0.1\gamma$ to $\gamma$ over the first few epochs, followed by cosine annealing to $10^{-6}$. 

We compare against two baselines. For experiments with linear parameterizations, the baseline is a Tikhonov-regularized pseudo-inverse $T_\alpha = (H^TH + \alpha L^TL)^{-1}H^T$, where $L$ is the first-difference matrix and $\alpha > 0$ is selected by minimizing the root mean square error (RMSE) on a validation set. For nonlinear parameterizations, the baseline uses the same architecture as our method (multilayer perceptron (MLP) or UNet, depending on the experiment) but is trained with an MSE loss on noisy paired data $y_i = Hx_i + \text{noise}$, with noise distributions varying per problem.

Performance is evaluated on a test set with increasing noise levels using the \textbf{RMSE}. We further quantify model \textbf{stability} by measuring sensitivity of the output to small input perturbations: for a given measurement $y$, stability is calculated as $\frac{1}{D}\sum_{i=1}^D \|f(y + \delta y_i) - f(y)\|_2 /\|\delta y_i\|_2,\ \delta y_i \sim \mathcal{N}(0, \delta^2 I)$, where $f$ is the reconstruction model, $D$ is the number of perturbation directions, and $\delta$ controls the perturbation magnitude. A lower value indicates a more stable model, i.e., one whose output changes little under small input perturbations.

To assess interpretability, we use \textbf{saliency maps} \cite{simonyan_deep_2013}, which measure the local sensitivity of a model's output to its input. For a trained model $G$ and a data pair $(x, y)$, the saliency map is $S(y) = |\nabla_y\, \mathrm{Loss}(G(y), x)|$, where each entry $S_i(y)$ quantifies the effect of a small change in $y_i$ on the prediction.  

All experiments are evaluated over $20$ independent trials in which a random subset of test samples is drawn, and the RMSE and stability metrics are computed. We report the mean and standard deviation across trials, with shaded regions indicating $\pm 2$ standard deviations. For the CT experiment, $50$ samples are drawn per trial from the $750$ test samples. For MNIST, $200$ samples are drawn from $5000$.

\subsection{Differentiation}
\begin{figure}[t]
    \centering
    \includegraphics[width=0.8\linewidth]{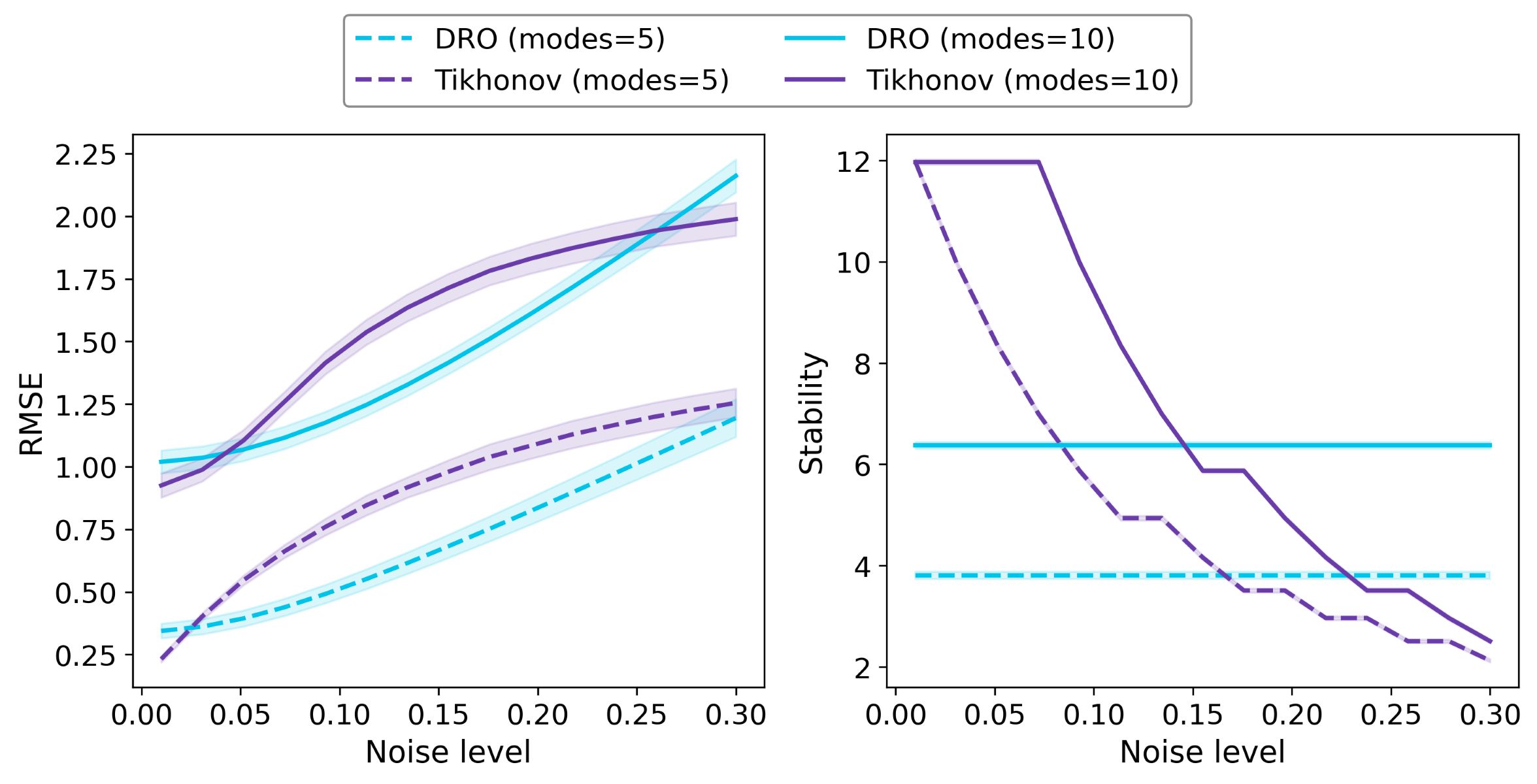}
    \caption{Differentiation: reconstruction metrics of the DRO inverse compared to the Tikhonov baseline at $M=5$ and $M=10$ modes. Left: RMSE versus noise level. Right: stability metric versus noise level, where a lower value means the output is less sensitive to small input perturbations. Solid lines show the mean over trials and shaded bands $\pm 2$ standard deviations. 
    }
    \label{fig:diff-metrics}
\end{figure}
\begin{figure}[t]
    \centering
    \includegraphics[width=0.8\linewidth]{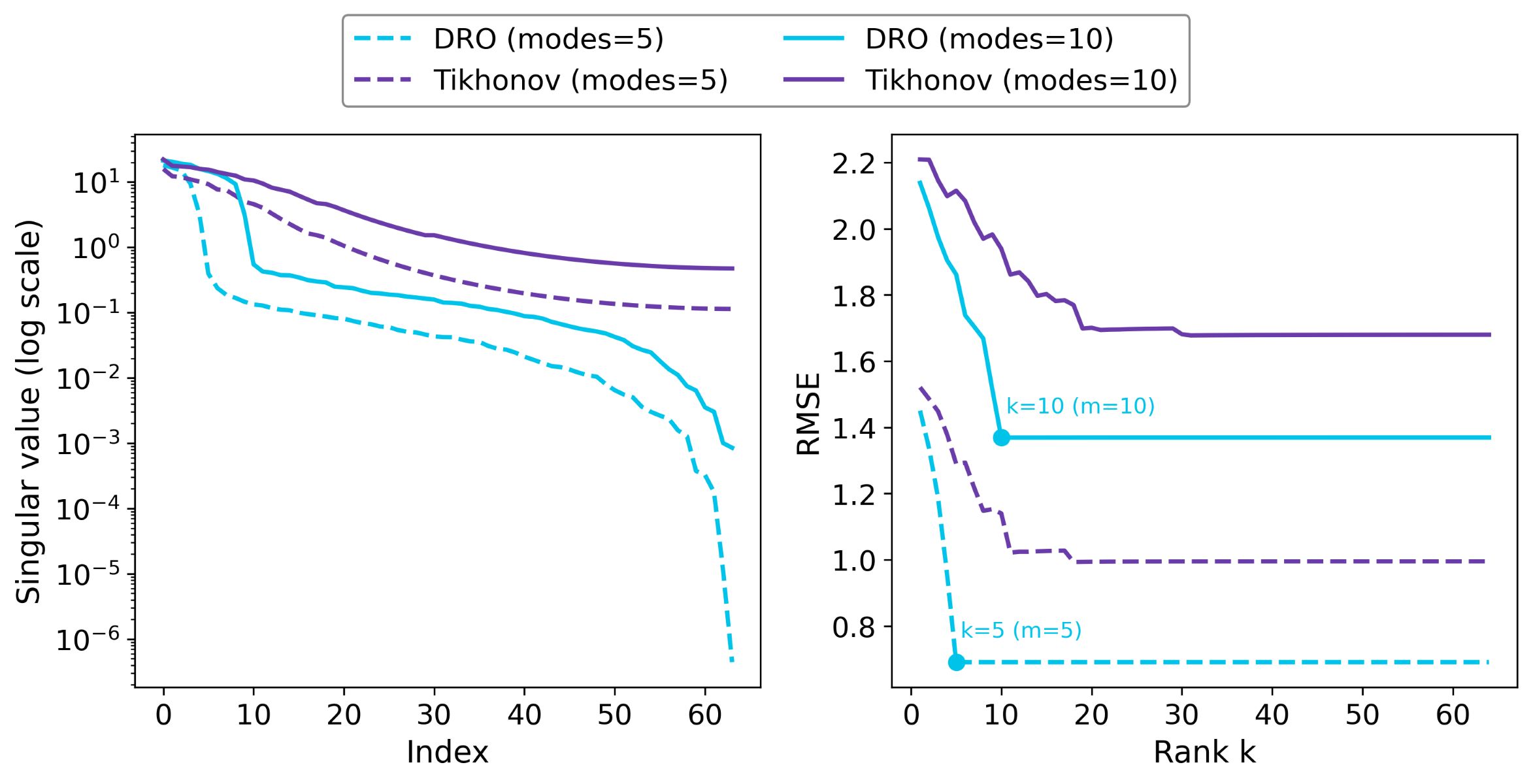}
    \caption{Spectral analysis of DRO and Tikhonov inverses. Left: singular values. Right: RMSE under rank-\textit{k} truncation.}
    \label{fig:diff-spectrum}
\end{figure}

As forward operator $H$ we adopt the cumulative integral $(Hx)(t_k) = \sum_{m=0}^k x(t_m)\,\Delta t$ with $\Delta t = \tfrac{1}{T-1}$, and generate the input as a truncated random Fourier sine series $x(t) = \sum_{k=1}^M c_k \sin(2\pi k t)$ with i.i.d.\ coefficients $c_k \sim \mathcal{N}(0,1)$, where $M$ is the number of sine modes (the higher, the more ill-posed). Sampling $x$ on a uniform grid of size $T$ yields $x \in \R^T$ and $H : \R^T \to \R^T$, a discrete cumulative sum. We use $M = 5$ and $10$ modes, feature length $T = 64$, and $N = 1000$ samples, and parameterize the inverse $g$ as a matrix. 

For the DRO formulation, we set the entropic regularization parameter to $\delta=0.01$ and the Wasserstein-radius to $\varepsilon=0.1$, with a constant learning rate of $\gamma=10^{-3}$ and bisection bounds $[0.1, 10^4]$ for $\lambda$. 

Figure \ref{fig:diff-metrics} shows the performance of the two DRO inverses against the Tikhonov baselines. All methods perform comparably in low-noise regimes, but the Tikhonov RMSE increases rapidly with noise, while the DRO inverse remains stable, as reflected in the stability plot. 

The left panel of Figure~\ref{fig:diff-spectrum} shows that the singular values of the DRO inverse drop sharply after the $k$-th largest singular value, which coincides with the number of modes. The right panel confirms that truncating beyond this rank does not further reduce the RMSE. This is the effect of the Lipschitz penalty in Theorem \ref{th:explicit-p(Y|X)}: the DRO objective penalizes the directions that most amplify worst-case perturbations, and since the data lives on a $M$-dimensional subspace, those are exactly the directions beyond rank $M$. The learned inverse therefore recovers the intrinsic dimension of the data, arriving at a data-driven analogue of truncated-SVD regularization. 

\subsection{Deblurring MNIST}\label{sec:mnist}
\begin{figure}[t]
    \centering
    \begin{subfigure}[t]{0.48\linewidth}
        \centering
        \includegraphics[height=7cm]{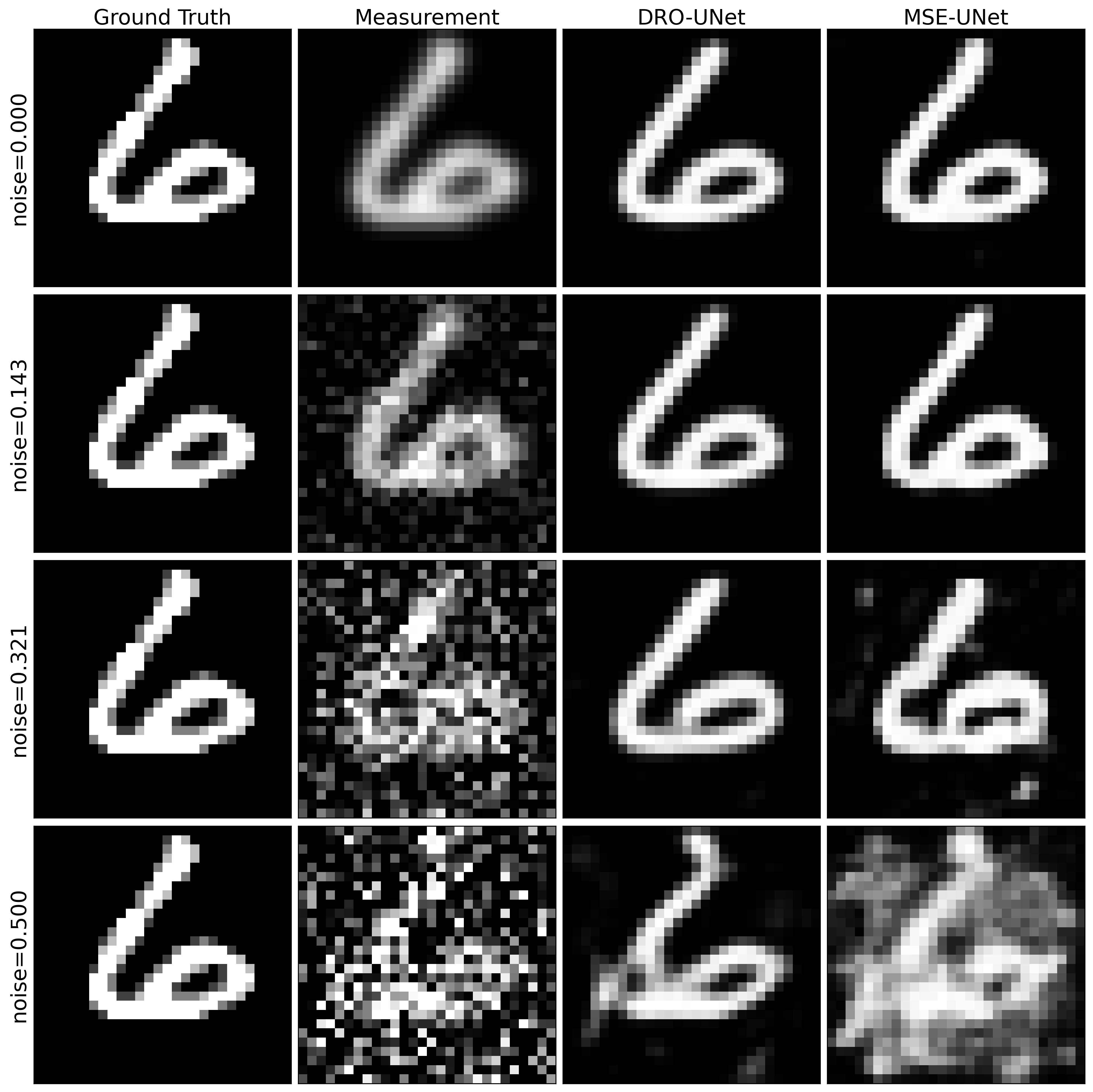}
    \end{subfigure} \hfill
    \begin{subfigure}[t]{0.48\linewidth}
        \centering
        \includegraphics[height=7cm]{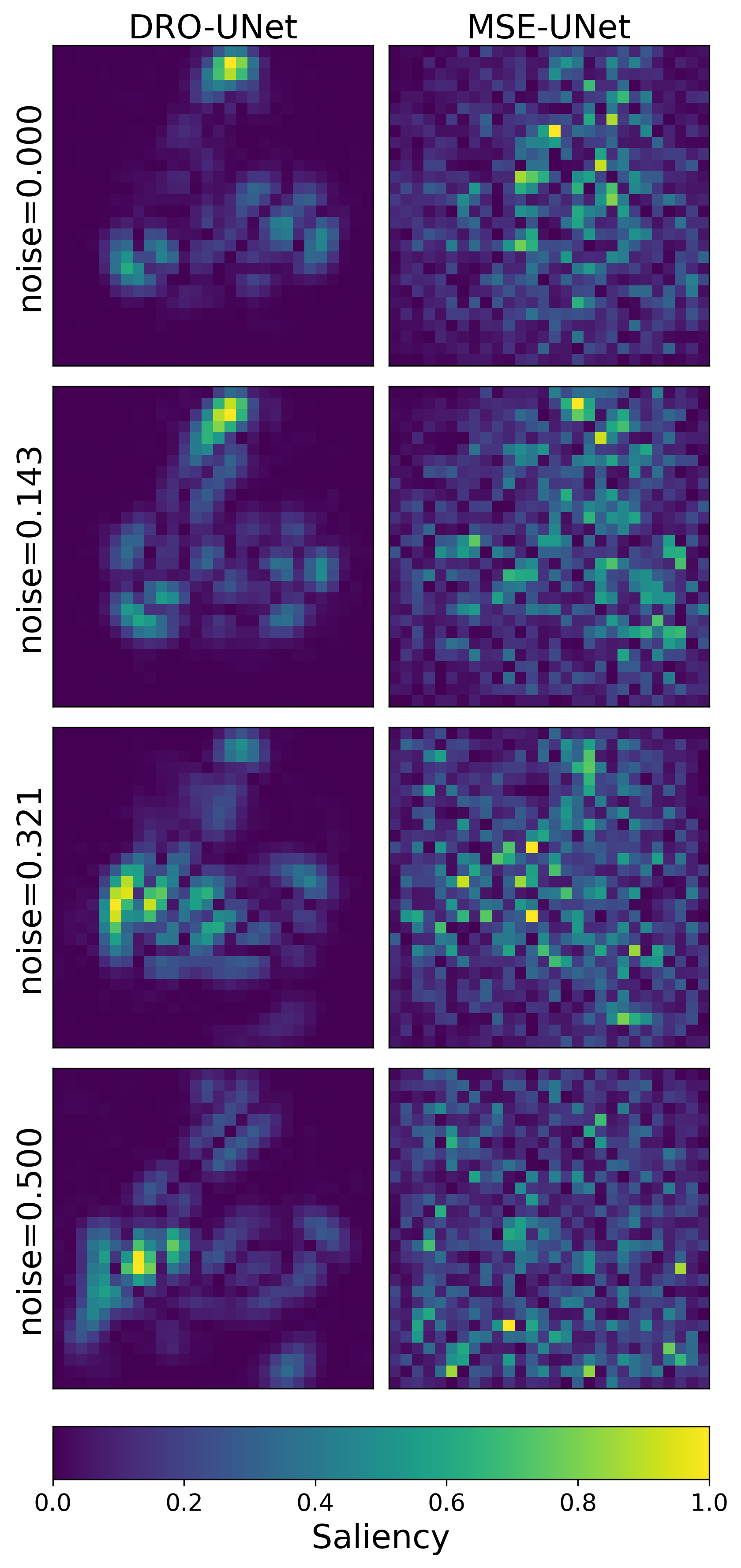}
    \end{subfigure}
    \caption{MNIST reconstructions (left) and saliency maps (right) under increasing Gaussian noise. Reconstruction columns (left to right): ground truth, noisy measurement, DRO-UNet, MSE-UNet. Rows correspond to increasing noise level (top to bottom). Higher saliency intensities indicate the input pixels the model is most sensitive to during reconstruction.}
    \label{fig:mnist_gaus_recon}
\end{figure}

\begin{figure}[t]
    \centering
    \includegraphics[width=0.8\linewidth]{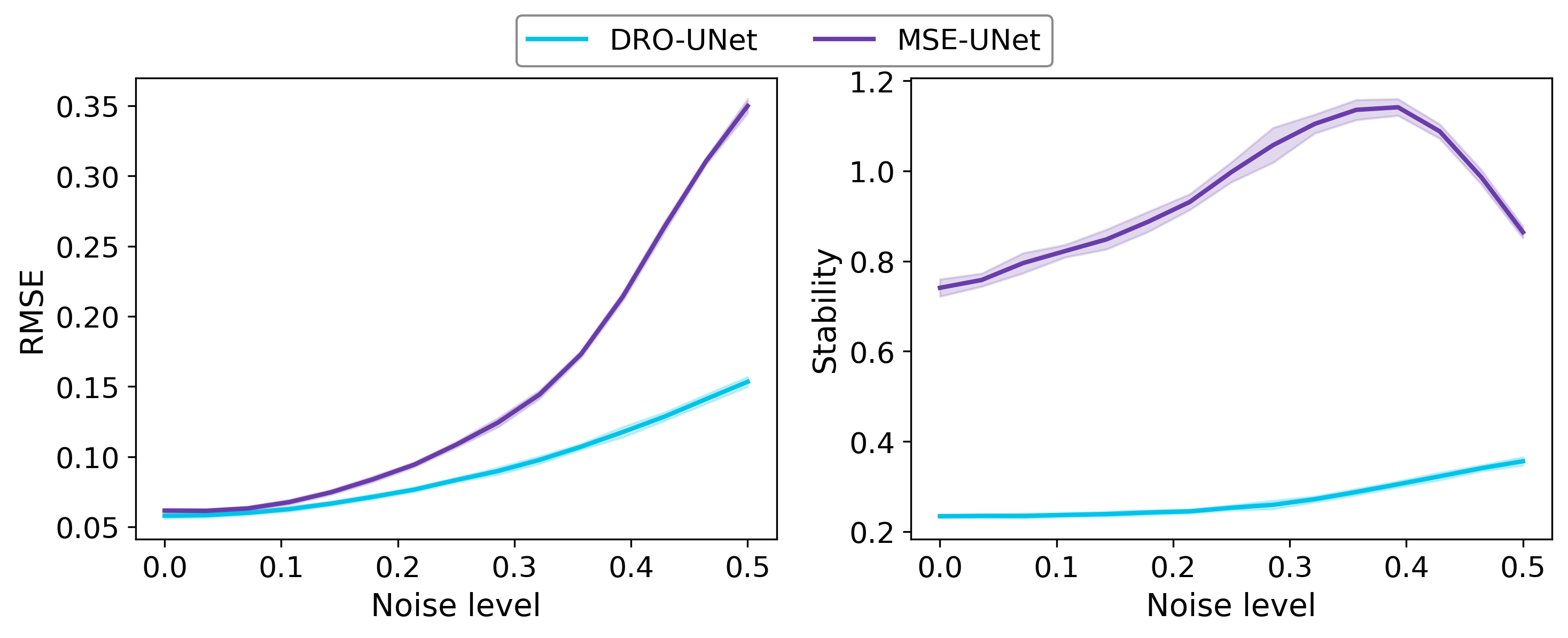}
    \caption{MNIST reconstruction metrics under increasing Gaussian noise with panels and shaded bands as in Figure~\ref{fig:diff-metrics}. }
    \label{fig:mnist_gaus_metrics}
\end{figure}
The input images $x \in \R^{28\times 28}$ are drawn from the MNIST dataset \cite{deng_mnist_2012}. As forward operator $H:L^2(\R^2) \to L^2(\R^2)$, we consider a convolution of an image $x(s,t)$ with a Gaussian kernel $\mathcal{K}$:
\begin{align*}
    Hx &= (x * \mathcal{K}_{k,\sigma})(s, t) = \int_{\R^2} x(s-\tau, t-\rho)\mathcal{K}_{k,\sigma}(\tau,\rho)\ d\tau\ d\rho, \\
   \text{where } \mathcal{K}_{k,\sigma}(\tau, \rho)&=\frac{1}{2\pi\sigma^2}\exp\left(-\frac{\tau^2+\rho^2}{2\sigma^2} \right).
\end{align*}
We discretize this using \verb|torch.nn.functional.conv2d| with \verb|same| padding and a $k\times k$ kernel $\mathcal{K}^\text{discrete} \in \R^{k\times k}$, giving $H^\text{discrete}: \R^{28\times 28} \to \R^{28\times 28}$. 

We set $\sigma^2=1$ and $k=5$, and parameterize the inverse as a UNet of depth $2$ with $64$ base channels. For the DRO formulation, we set $\delta=0.05$ and $\varepsilon=1.0$ with learning rate $\gamma=10^{-3}$, bisection bounds $[10^{-2}, 10]$ and tolerance $0.05$. The SG estimator uses $5$ samples per iteration during training and $6$ during validation. The DRO-UNet is trained on noise-free data while the MSE-UNet is trained on corrupted measurements $y_i = Hx_i + \eta$, $\eta\sim\mathcal{N}(0,0.15^2)$. We use $25000$ training pairs, $5000$ for validation, and $5000$ for testing, with $\lambda$ selected via bisection on the validation DRO objective. 

Figures \ref{fig:mnist_gaus_recon} and \ref{fig:mnist_gaus_metrics} show results for Gaussian noise. The DRO-UNets produce better reconstructions at higher noise levels than the MSE-UNet. The saliency maps show that the DRO model focuses on edges and strokes of the digit while ignoring the background, whereas the MSE-UNet shows diffuse, unstructured sensitivity without a consistent pattern, suggesting that the DRO objective encourages the model to focus on relevant features rather than noise. The DRO-UNets are more stable than the MSE-UNet in both noise settings. 

Without retraining, we apply the same model to measurements corrupted with Poisson noise. Given a noise level parameter $\sigma \in [0.01, 1]$, measurements are generated as $y=\text{Pois}(\alpha \cdot Hx)/\alpha$, with $\alpha=1/\sigma$, so that larger $\sigma$ corresponds to stronger noise while the mean is preserved. Figures~\ref{fig:mnist_pois_recon} and~\ref{fig:mnist_pois_metrics} show that the DRO model generalizes across both noise levels and noise families, while the MSE baseline degrades more sharply.
\begin{figure}[t]
    \centering
    \begin{subfigure}[t]{0.48\linewidth}
        \centering
        \includegraphics[height=7cm]{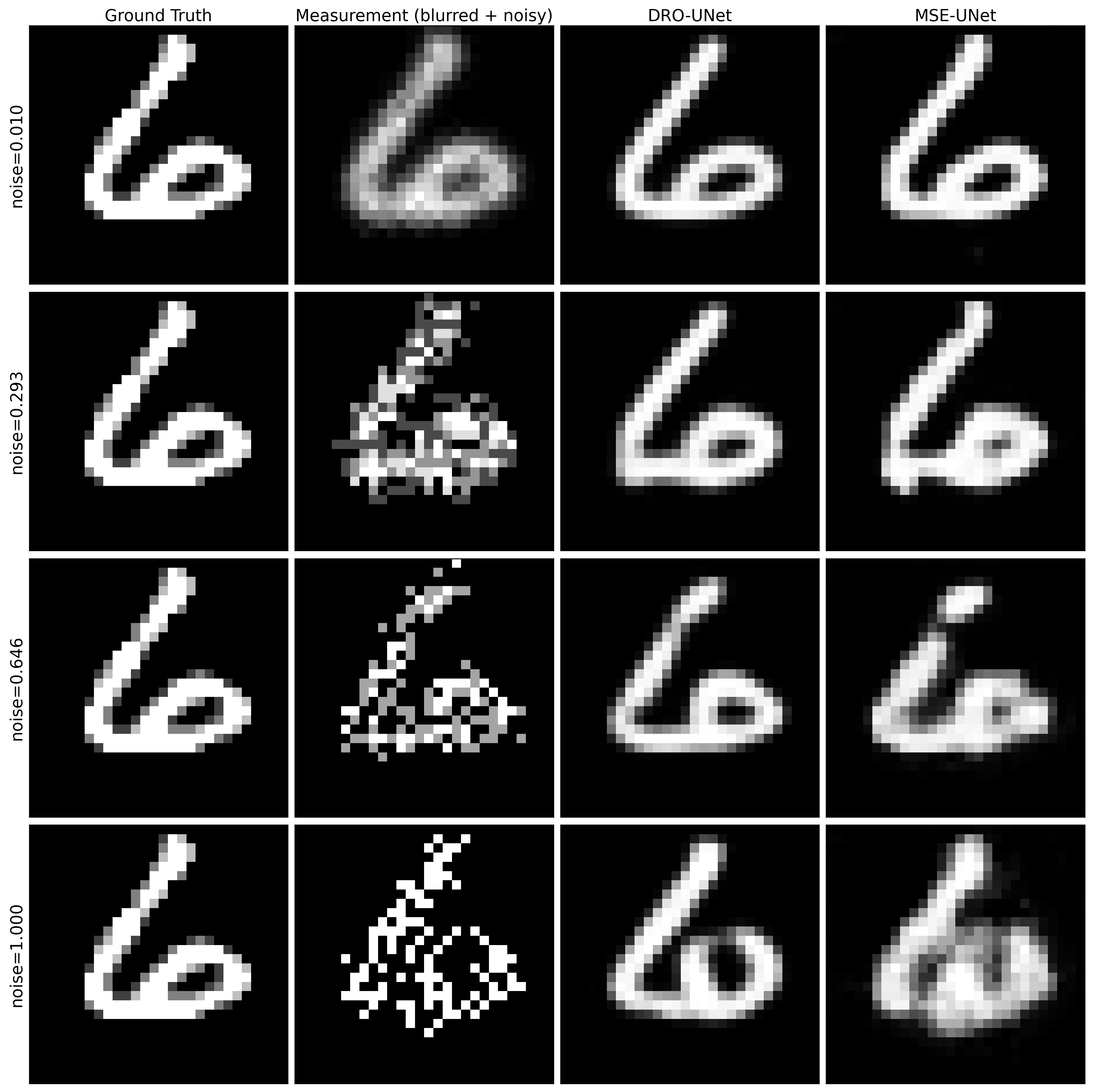}
    \end{subfigure}\hfill
    \begin{subfigure}[t]{0.48\linewidth}
        \centering
        \includegraphics[height=7cm]{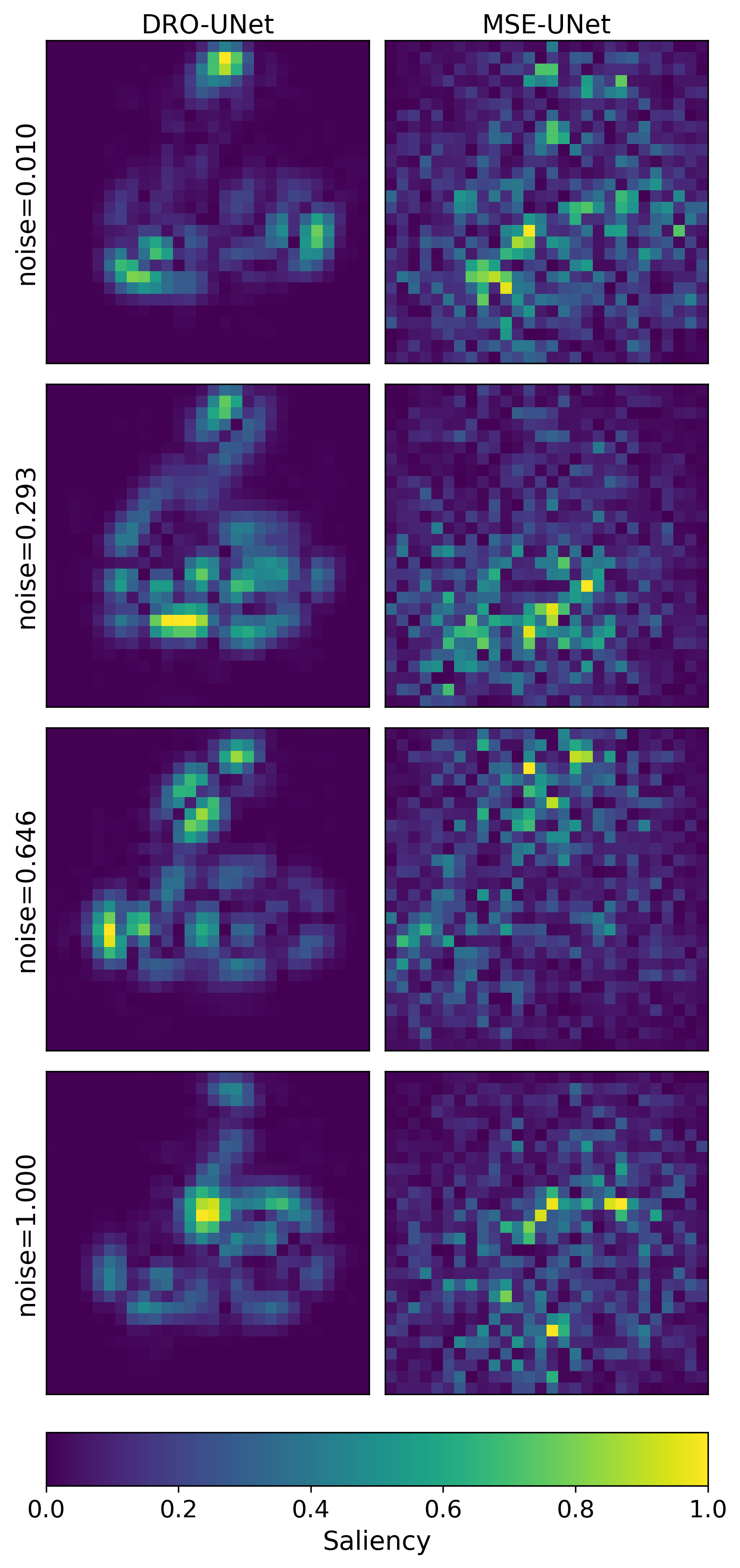}
    \end{subfigure}
    \caption{MNIST reconstructions and saliency under increasing Poisson noise. Reconstruction columns (left to right): ground truth, noisy measurement, DRO-UNet, MSE-UNet. Higher saliency intensities indicate the input pixels the model is most sensitive to during reconstruction}
    \label{fig:mnist_pois_recon}
\end{figure}

\begin{figure}[t]
    \centering
    \includegraphics[width=0.8\linewidth]{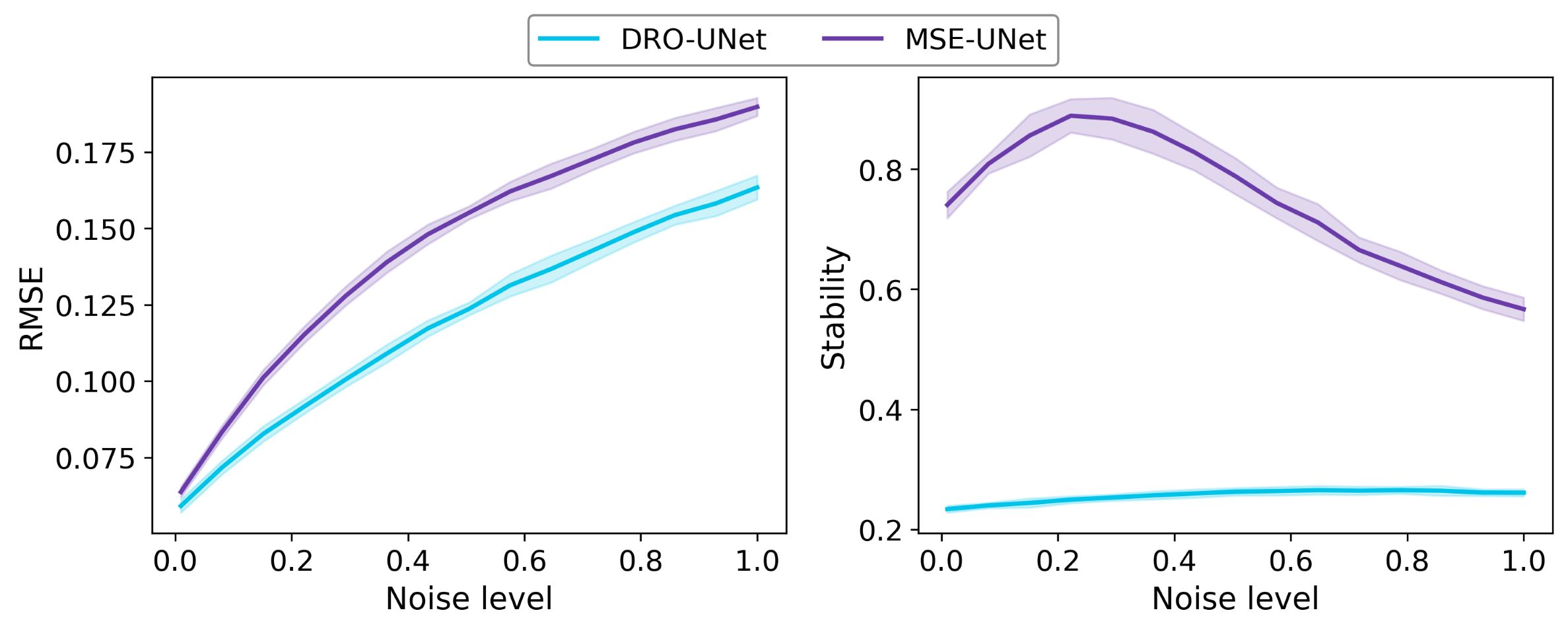}
    \caption{MNIST reconstruction metrics under increasing Poisson noise with panels and shaded bands as in Figure \ref{fig:diff-metrics}.}
    \label{fig:mnist_pois_metrics}
\end{figure}

\subsubsection{Robustness to Misspecified Forward Models}
\begin{figure}[t]
    \centering
    \begin{subfigure}[t]{0.48\linewidth}
        \centering
        \includegraphics[height=7cm]{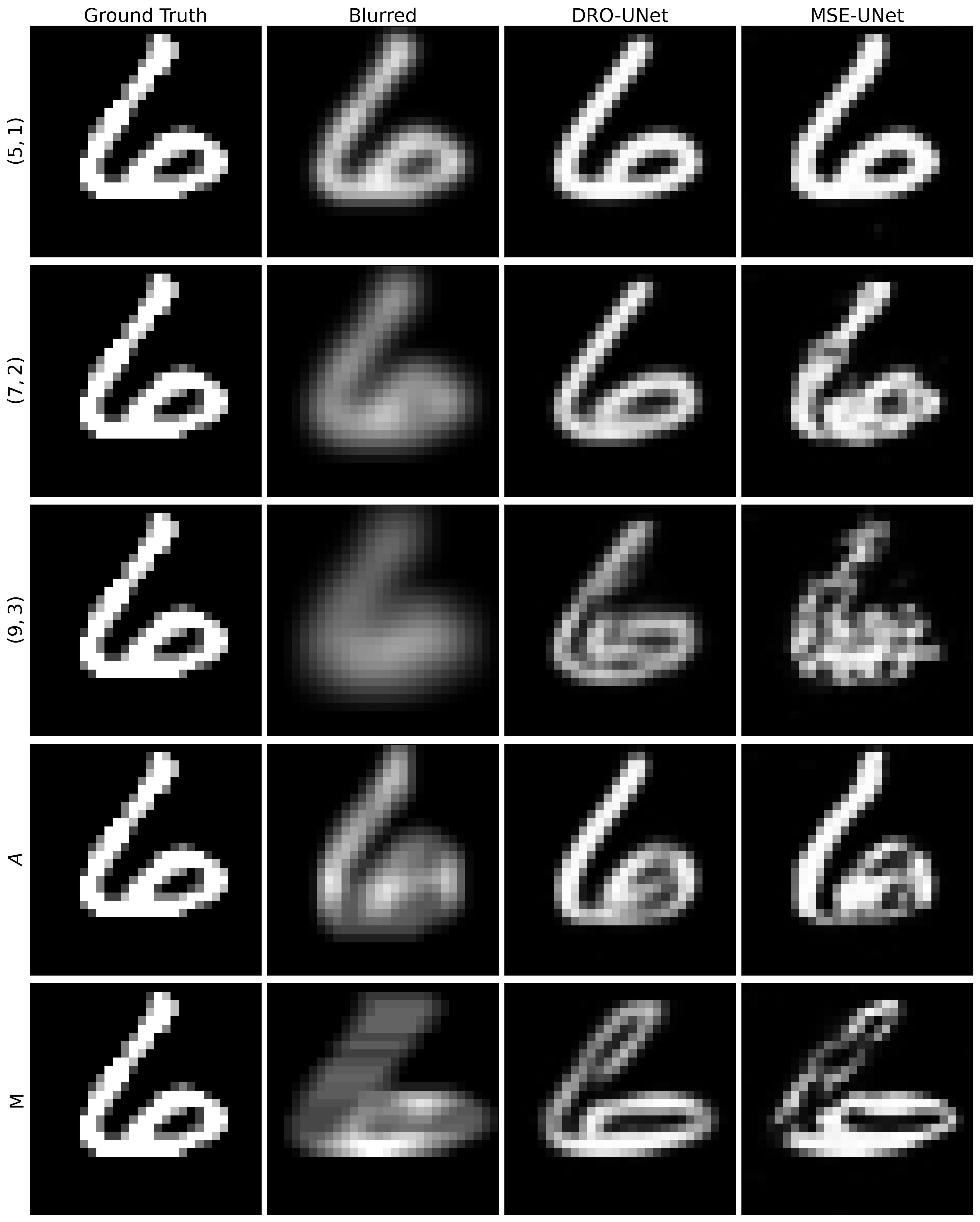}
    \end{subfigure}\hfill
    \begin{subfigure}[t]{0.48\linewidth}
        \centering
        \includegraphics[height=7cm]{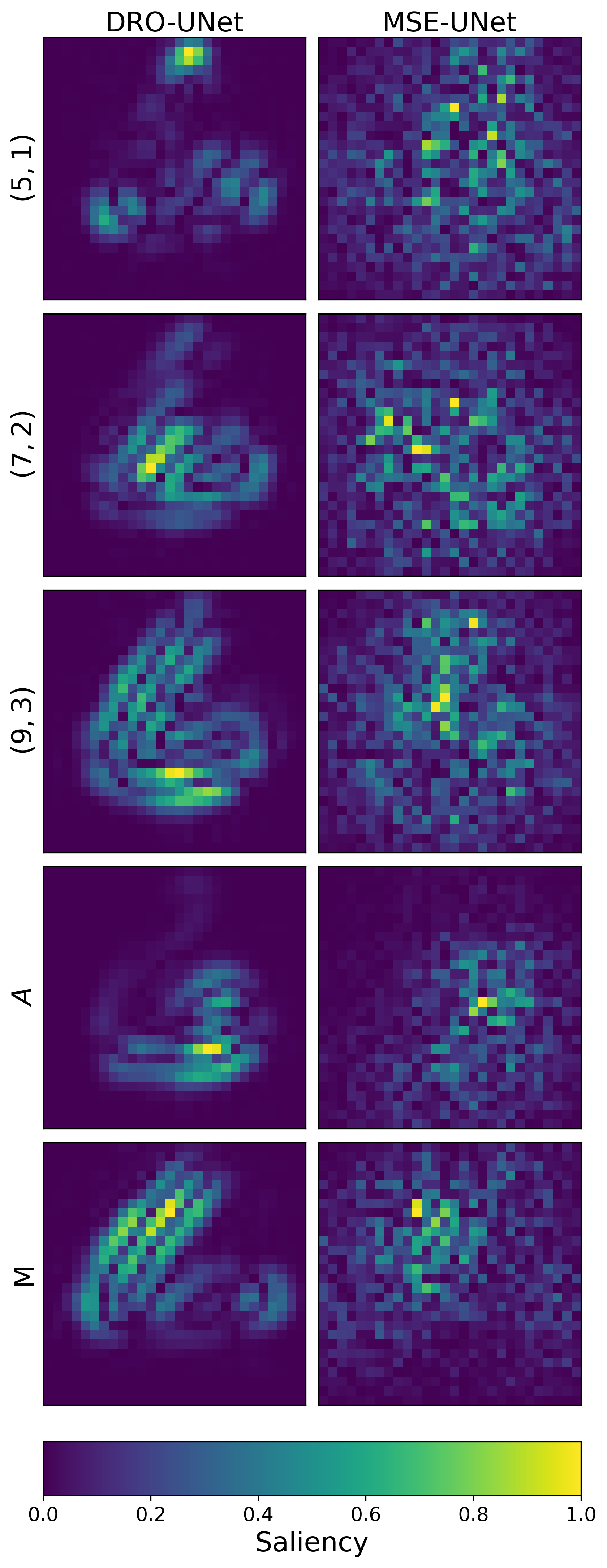}
    \end{subfigure}
    \caption{MNIST reconstructions and saliency maps for different blurring kernels. Reconstruction columns (left to right): ground truth, measurement, DRO-UNet, MSE-UNet. Rows are the five test kernels (training kernel, followed by two isotropic kernels, anisotropic, and motion blur). Higher saliency intensities indicate the input pixels the model is most sensitive to during reconstruction}
    \label{fig:mnist_kernel_recon}
\end{figure}

\begin{figure}[t]
    \centering
    \includegraphics[width=0.8\linewidth]{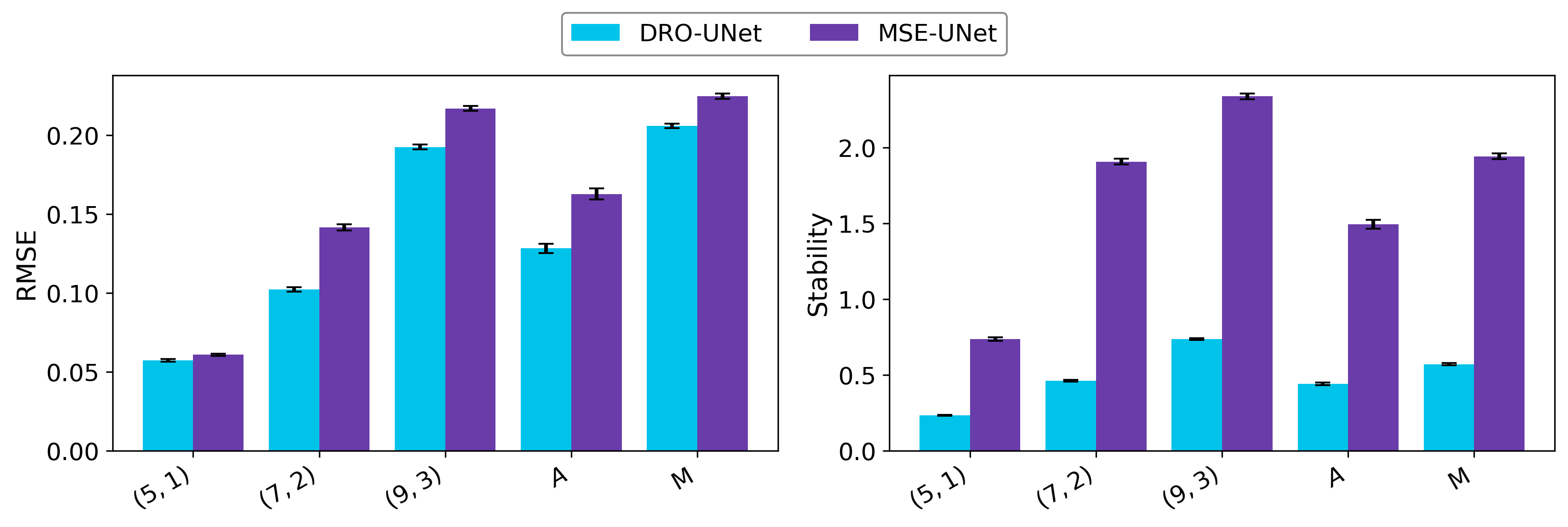}
    \caption{MNIST reconstruction metrics across the five test kernels of Figure~\ref{fig:mnist_kernel_recon}. Left: RMSE. Right: stability metric (lower is more stable).}
    \label{fig:mnist_kernel_metrics}
\end{figure}
To test robustness to misspecified forward operators (simulating equipment or settings mismatch at test time), we evaluate the trained model from Section~\ref{sec:mnist} on kernels not seen during training. In addition to various perturbed isotropic Gaussian kernels, which we denote as $\mathcal{K}_{k, \sigma}$, we test the $2$D anisotropic Gaussian kernel $\mathcal{K}_\text{A}$ and motion blur kernel $\mathcal{K}_\text{M}$:
\begin{align*}
    \mathcal{K}_\text{A} (s,r) = \frac{1}{2\pi\sigma_s\sigma_r} \exp\left( -\frac{s^2}{2\sigma^2_s}-\frac{r^2}{2\sigma^2_r} \right), \quad
    \mathcal{K}_\text{M} (s,r) = \frac{1}{L} \mathbf{1}_{[-L/2, L/2]} (s)\delta(r), 
\end{align*}
where $L>0$ is the blur length and $\delta$ is the Dirac delta. Motion blur introduces null frequencies in the kernel \cite{wang_optimization_2022}, making the problem more severely ill-posed. We evaluate on isotropic kernels $\mathcal{K}_{7,2}$ and $\mathcal{K}_{9,3}$, anisotropic kernel $\mathcal{K}_\text{A}$ with $k=7,\ \sigma_s=5,\ \sigma_r=0.5$, and motion blur kernel $\mathcal{K}_\text{M}$ with $L=9$.

Figures~\ref{fig:mnist_kernel_recon} and~\ref{fig:mnist_kernel_metrics} show reconstruction performance across the five kernels. The DRO-UNet consistently outperforms the MSE-UNet in both RMSE and stability. The saliency maps show the same edge-focused pattern.
\FloatBarrier
\subsubsection{Behavior under Extreme Ambiguity Radii}\label{sec:extreme-radii}
\begin{figure}[t]
    \centering
    \includegraphics[height=7cm]{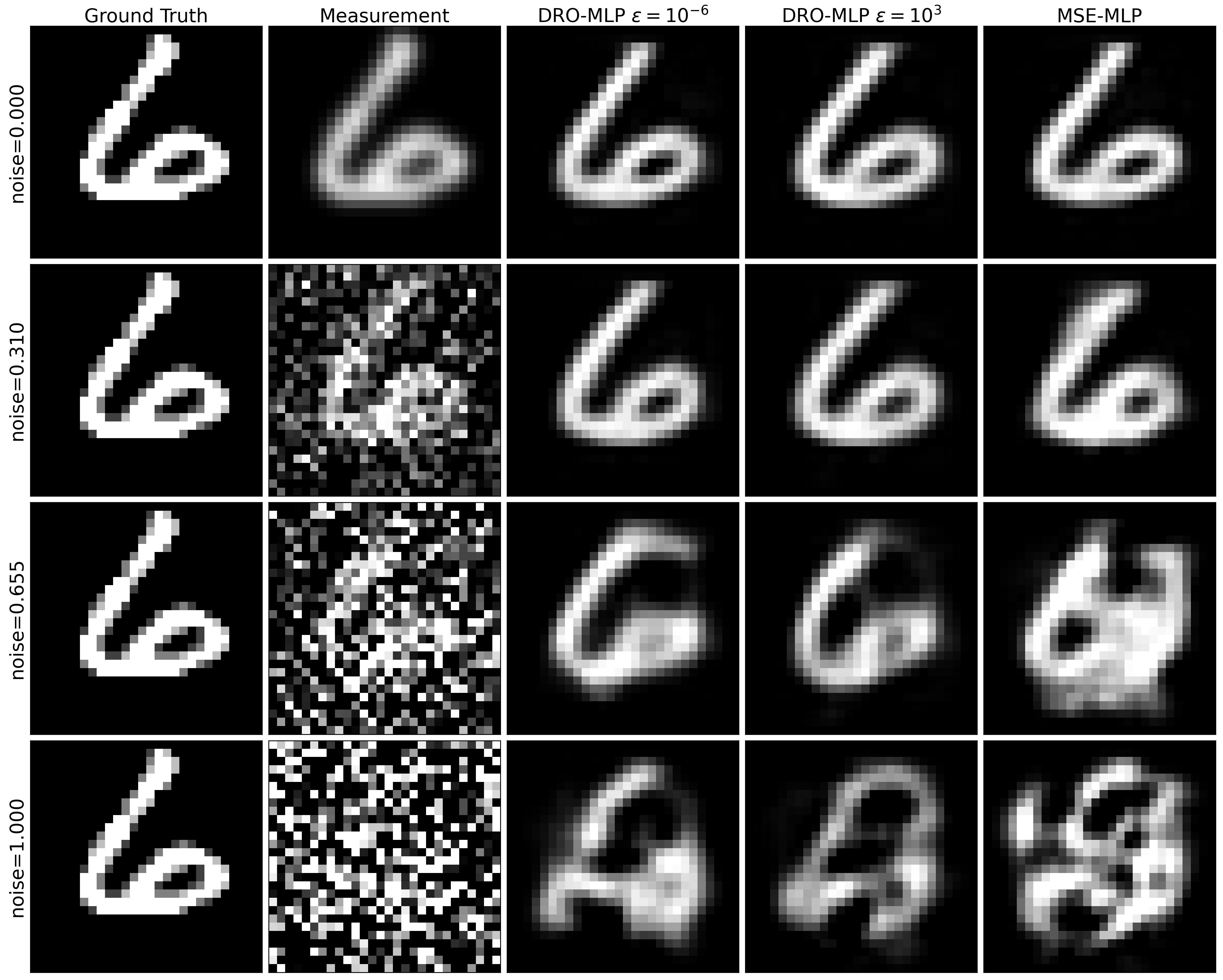}
    \caption{MNIST reconstructions for extreme radii. Columns (left to right): ground truth, noisy measurement, DRO-MLP small radius, DRO-MLP large radius, MSE-MLP.}
    \label{fig:mnist_extreme_recon}
\end{figure}

\begin{figure}[t]
    \centering
    \includegraphics[width=0.8\linewidth]{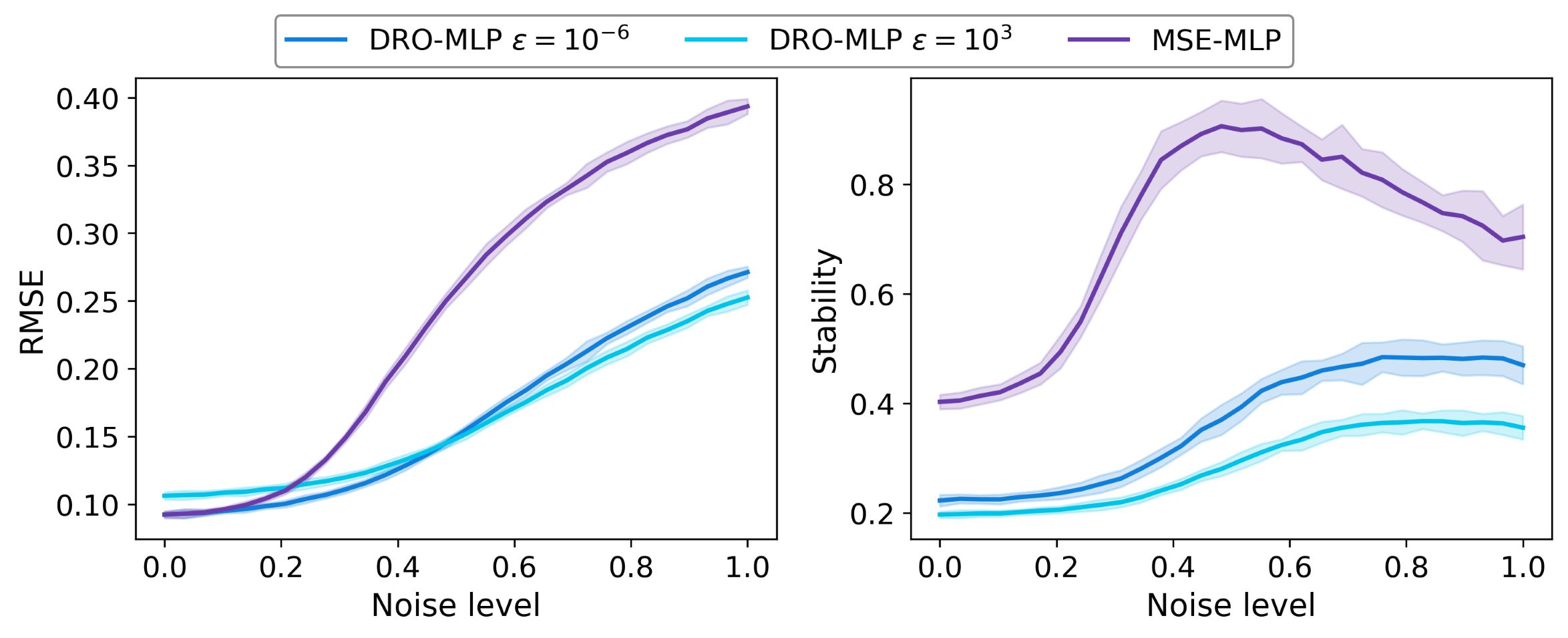}
    \caption{MNIST reconstruction metrics under increasing Gaussian noise for extreme radii with panels and shaded bands as in Figure \ref{fig:diff-metrics}.}
    \label{fig:mnist_extreme_metrics}
\end{figure}
To isolate the effect of the Wasserstein radius without the implicit robustness of a UNet, we replace the UNet with an MLP of $4$ layers, $512$ hidden units, and dropout rate $0.2$, using the same setup as the previous section. We test two extreme radii: $\varepsilon=10^{-6}$ with bisection bounds $[10^3, 10^6]$ and  tolerance $10$, and $\varepsilon=10^3$ with bounds $[10^{-5}, 10]$ and tolerance $10^{-6}$.

Figures~\ref{fig:mnist_extreme_recon} and~\ref{fig:mnist_extreme_metrics} illustrate the expected trade-off. The small-radius DRO-MLP performs better in low-noise regimes, where the empirical distribution closely approximates the truth and the tight ambiguity set makes the DRO objective behave similarly to empirical risk minimization, nearly coinciding with the MSE-MLP. The large-radius DRO-MLP outperforms at higher noise levels, as a larger radius allows the adversary to model stronger distributional shifts, trading fidelity for robustness. This is visible in the reconstructions, where the small-radius reconstruction is more faithful at low noise while the large-radius reconstruction is more smoothed but remains more faithful as noise grows. Small radii prioritize accuracy under the empirical distribution while large radii prioritize stability under worst-case perturbations, so that the ambiguity radius acts as an interpretable control over the accuracy-robustness trade-off. This is once more confirmed by the stability metric: both DRO-MLPs are more stable than the MSE-MLP across the entire noise range, with the large-radius model the more stable of the two.

\FloatBarrier
\subsection{Deblurring CIFAR-10}
\begin{figure}[t]
    \centering
    \begin{subfigure}[t]{0.48\linewidth}
        \centering
        \includegraphics[height=7cm]{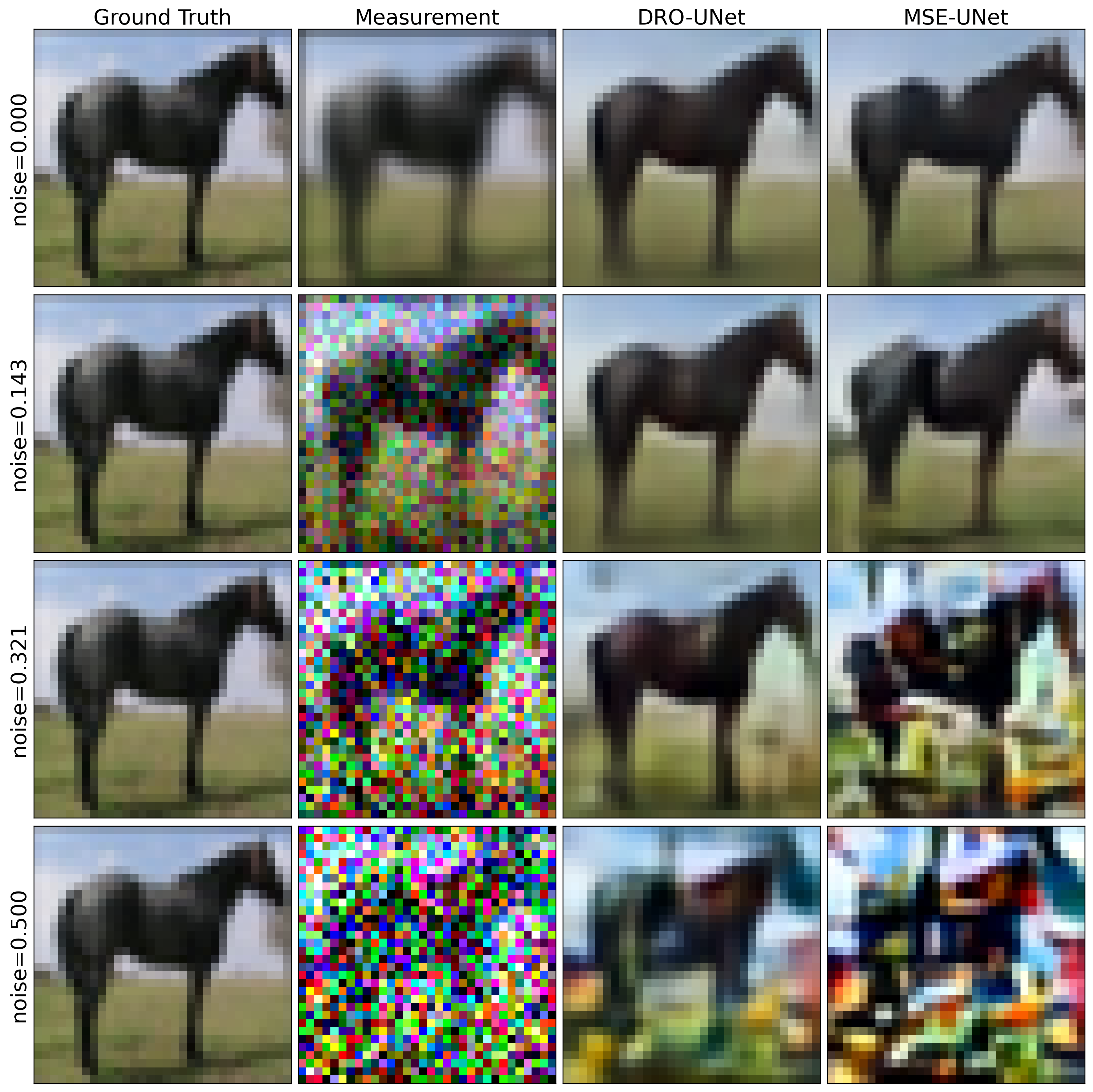}
    \end{subfigure}\hfill
    \begin{subfigure}[t]{0.48\linewidth}
        \centering
        \includegraphics[height=7cm]{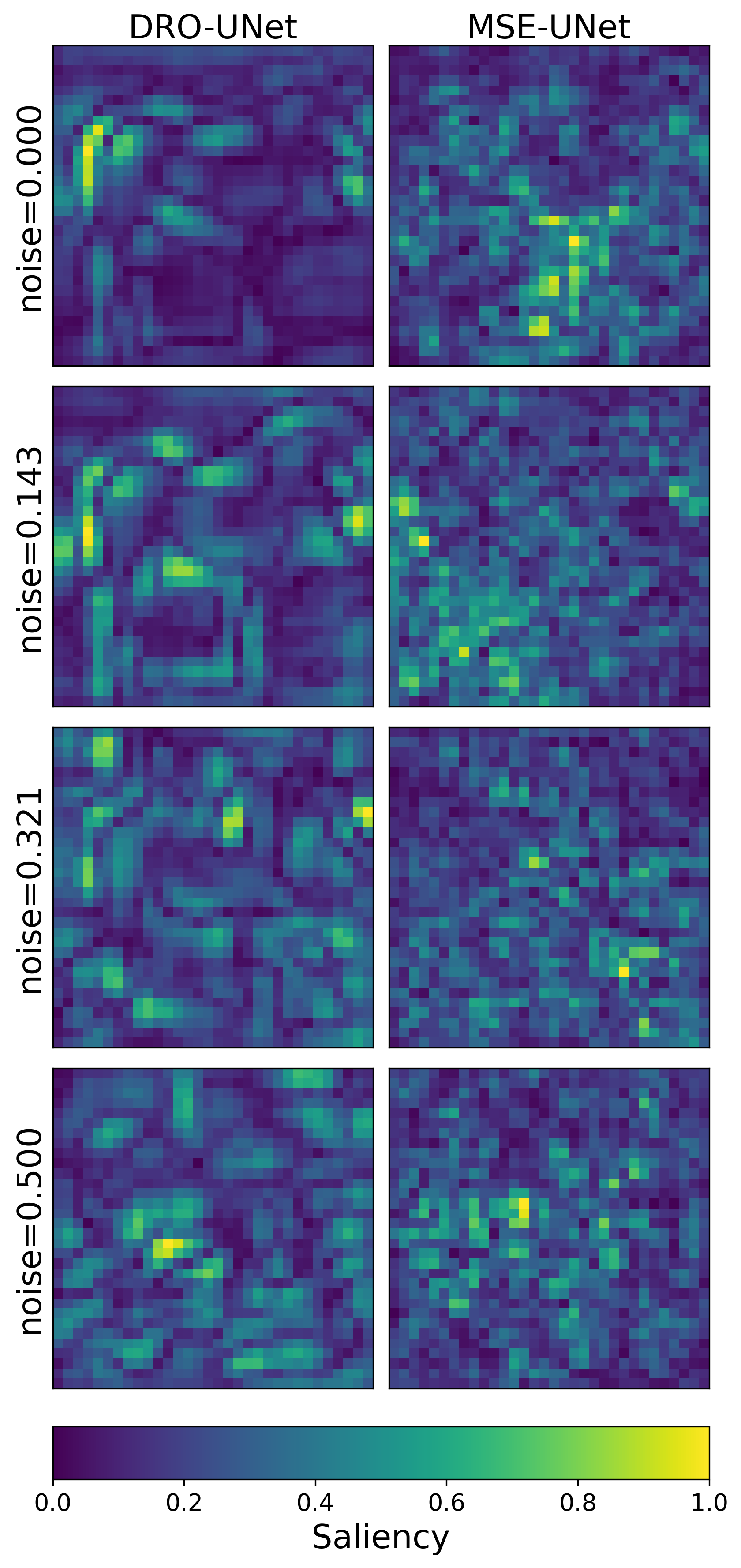}
    \end{subfigure}
    \caption{CIFAR-10 reconstructions and saliency under increasing Gaussian noise. Reconstruction columns (left to right): ground truth, noisy measurement, DRO-UNet, MSE-UNet. Higher saliency intensities indicate the input pixels the model is most sensitive to during reconstruction}
    \label{fig:cifar_recon}
\end{figure}

\begin{figure}[t]
    \centering
    \includegraphics[width=0.8\linewidth]{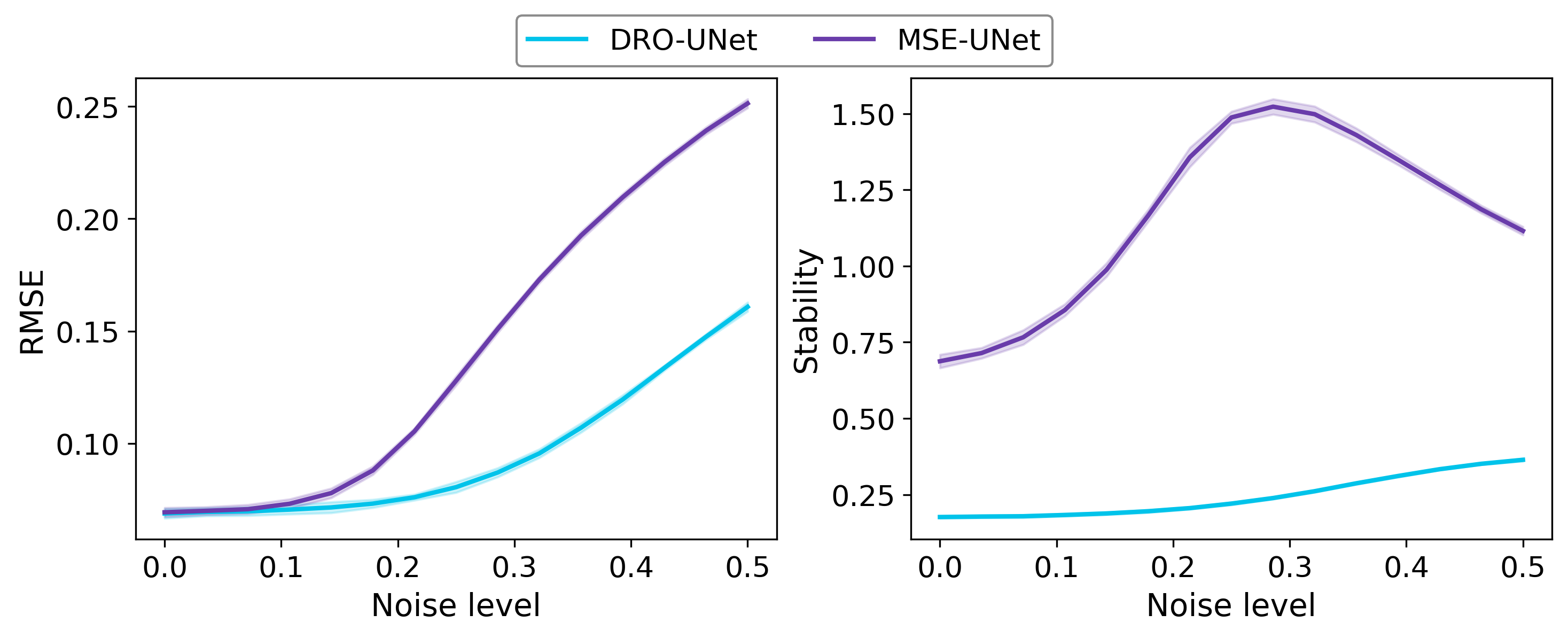}
    \caption{CIFAR-10 reconstruction metrics under increasing Gaussian noise with panels and shaded bands as in Figure \ref{fig:diff-metrics}.}
    \label{fig:cifar_metrics}
\end{figure}
We take the exact same setting as the MNIST deblurring experiment, with RGB images $x \in \R^{3\times32\times 32}$ drawn from CIFAR-10 \cite{krizhevsky_learning_2009} and a Unet with $3$ input channels and depth $3$. This experiment tests whether the framework scales to more complex natural image structure with higher inter-class variability than MNIST.

Figures~\ref{fig:cifar_recon} and~\ref{fig:cifar_metrics} largely mirror the MNIST findings. The DRO-UNet again achieves lower RMSE and better stability across all noise levels, with saliency maps showing the same pattern of structured, edge-focused sensitivity compared to the diffuse sensitivity of the MSE-UNet. The difference between the two models is most clearly visible at higher noise levels. While the reconstructions look similar in the low-noise regime, the MSE-UNet begins to introduce artifacts and hallucinations as the noise grows, whereas the DRO-UNet continues to return a coherent, denoised and deblurred image. This is reflected in the RMSE curve: both models start at a comparable value for low noise, but the RMSE of the MSE-UNet grows steeply while that of the DRO-UNet increases gradually, so that the gap between them widens with the noise level. The stability metric also shows a clear separation. The DRO-UNet has a stability ratio that stays low and grows slightly while the MSE-UNet is significantly more sensitive to input perturbations.

\subsection{Robust Simulator}\label{sec:robust-simulator}
\begin{figure}[t]
    \centering
    \begin{subfigure}[t]{0.48\linewidth}
        \centering
        \includegraphics[height=7cm]{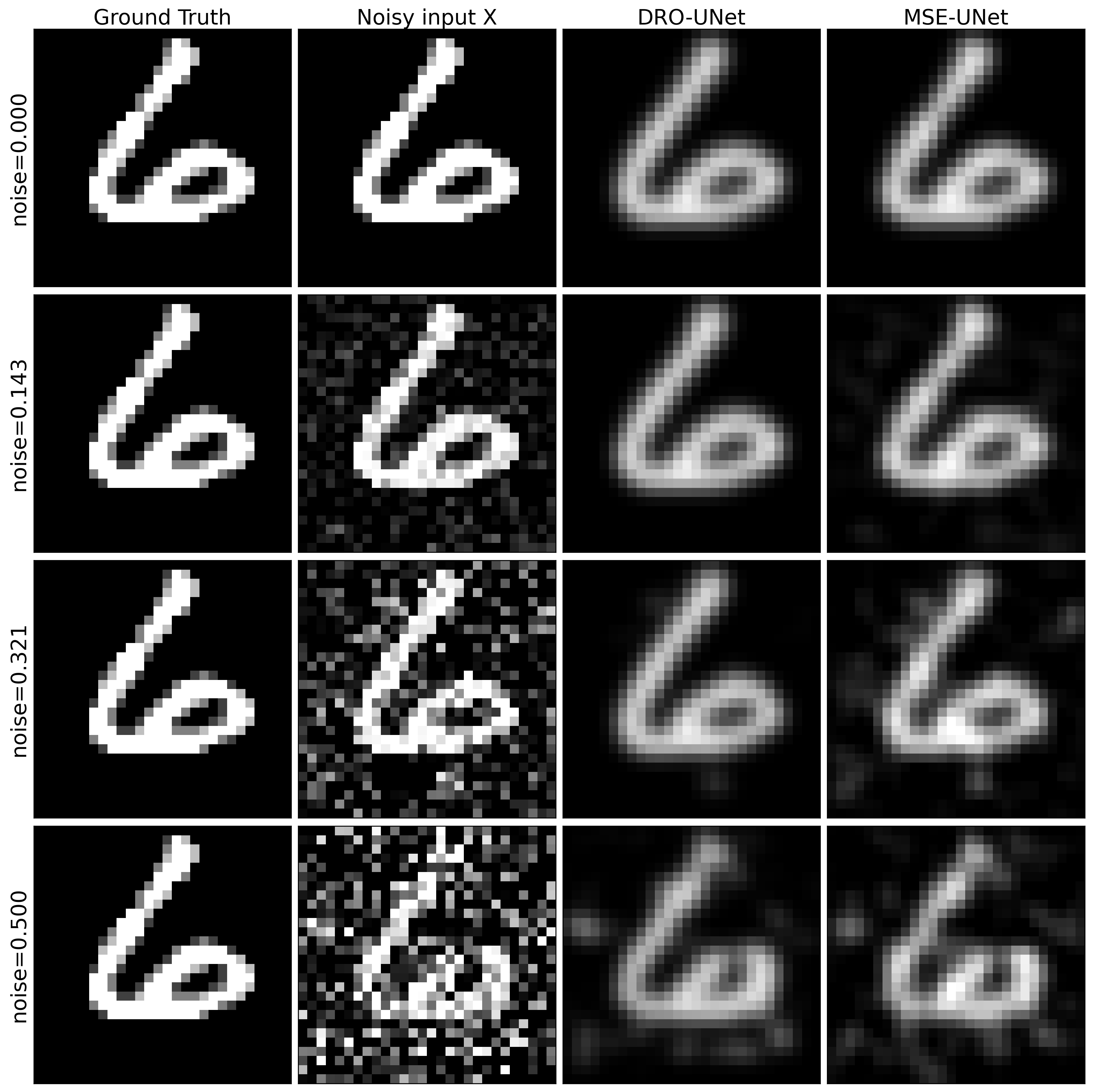}
    \end{subfigure}\hfill
    \begin{subfigure}[t]{0.48\linewidth}
        \centering
        \includegraphics[height=7cm]{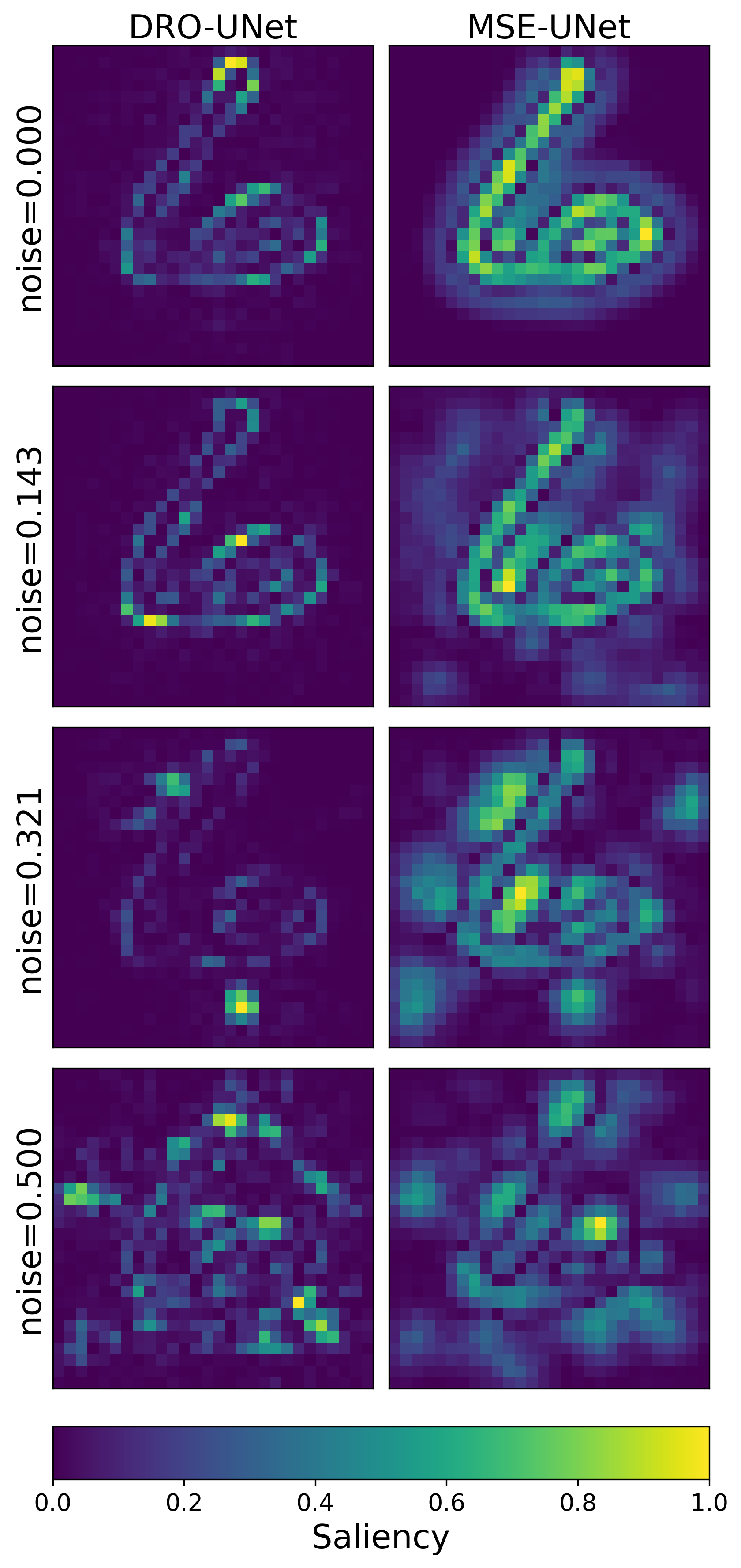}
    \end{subfigure}
    \caption{MNIST robust simulator: simulated measurements and saliency maps under increasing input noise. Simulation columns (left to right): ground truth, noisy ground truth, DRO-UNet applied to noisy ground truth, MSE-UNet. Rows correspond to increasing noise levels (top to bottom). Higher saliency intensities indicate the input pixels the model is most sensitive to during reconstruction. Here, the model learns the forward map.}
    \label{fig:mnist_simulator_recon}
\end{figure}

\begin{figure}[t]
    \centering
    \includegraphics[width=0.8\linewidth]{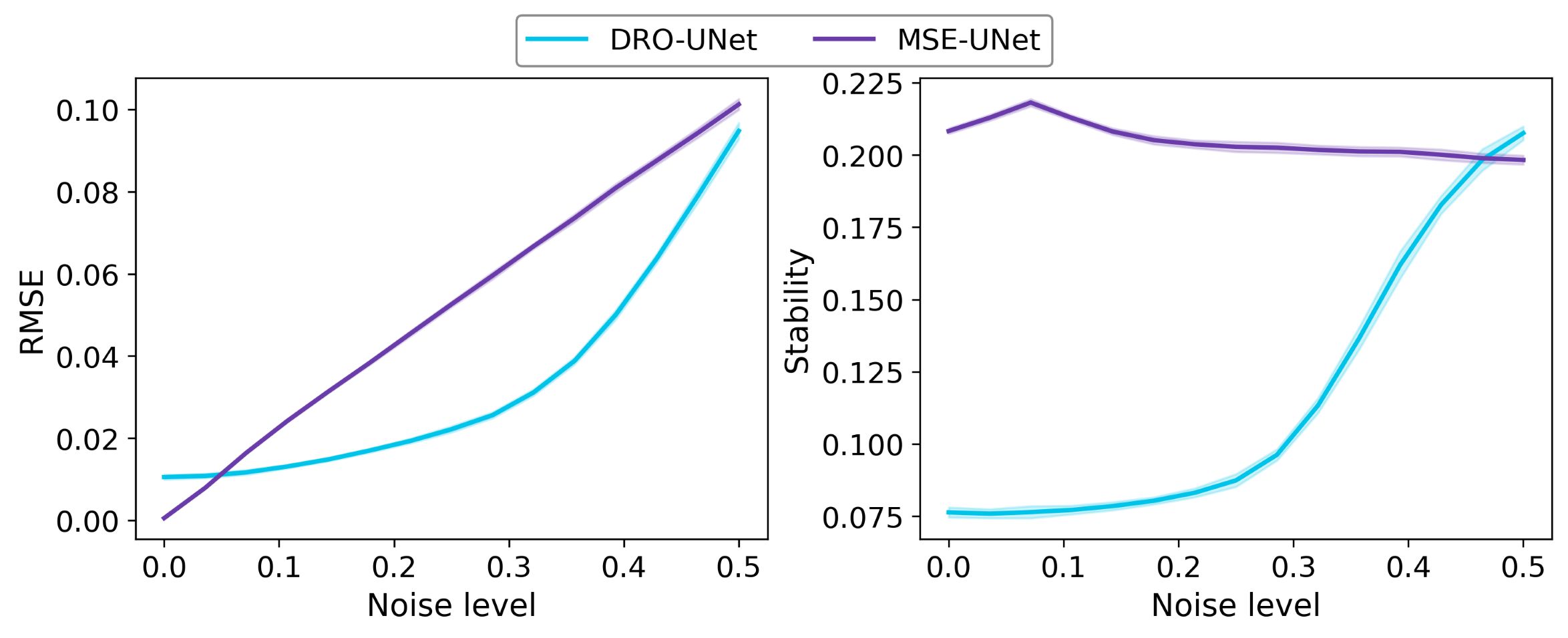}
    \caption{MNIST robust simulation metrics under increasing Gaussian noise with panels and shaded bands as in Figure~\ref{fig:diff-metrics}.}
    \label{fig:mnist_simulator_metrics}
\end{figure}

The previous sections studied robustness from the perspective of the inverse map, emphasizing stability under perturbations in $Y|X$. The robust simulator shifts focus to the forward operator $H$ itself, learning a forward mapping stable under perturbations in $X$. Learning a map for simulation has also been explored through invertible generative models that represent the joint distribution of inputs and measurements and can be run in either the forward or inverse direction \cite{leeuwen_invertible_2025}. In a blurring setting, a naive simulator propagates input noise through $H$, producing a blurred, noisy output. A robust simulator instead suppresses off-distribution artifacts before applying $H$, effectively learning a map of the form $H_\text{robust}(x)\approx(H\circ D)(x)$, where $D$ is an implicit projection onto the data manifold. Potential applications include synthetic data generation and learned forward modeling.

The setup follows previous sections, with the key difference being the prediction direction that results in a different loss. We consider $c=c_{X|Y}$ defined analogous to \eqref{eq:cond-cost}, the reference measure $\eta_{S|S=(x,y)}(u,v)=\eta_X(u)\otimes\mu_{Y|X=x}^*(v)$ with $\eta_X$ a Lebesgue measure, and the loss $\ell(x,y;G)=\|y-G(x)\|_2^2$, where $G$ is the learned robust approximation of $H$. This setting reflects perturbations in $X|Y$ (see Section \ref{sec:str}) with a forward loss.

We find the dual formulation 
\begin{align*}
    I = \inf_{G\in \mathcal{G}} \inf_{\lambda \geq 0} \lambda \hat \varepsilon +  \lambda\delta\int_{S} \log \left[ \int_X\int_Y \exp\left(\frac{\|v-G(x)\|^2_2}{\lambda \delta}\right)d\mu_{Y|X=x}^*(v)\ d\mathcal{N}(x, \delta^2I)(u)\right] d\mu^*(x,y).
\end{align*}
Compared with \eqref{eq:sinkhorn-dual}, the roles of $u$ and $v$ are exchanged, and the Gaussian is centered at $x$ rather than $Hx$. Figures~\ref{fig:mnist_simulator_recon} and~\ref{fig:mnist_simulator_metrics} show a clear distinction between the two models. The DRO-UNet produces outputs closely resembling a blurred but noise-reduced version of the ground truth, while the MSE-UNet struggles more to remove noise and propagates artifacts to its predictions.

\subsection{CT Reconstruction}
Computed tomography reconstructs a cross-sectional image of an object from X-ray projections taken at many angles, and is often used in medical and industrial imaging. We consider CT reconstruction from sinogram measurements using the 2DeteCT dataset \cite{kiss_2detect_2023}, preprocessed via the LION library \cite{biguri_lion_2026}. The forward operator $H:\R^{1024\times1024}\to\R^{3600\times956}$ is the discretized cone-beam
Radon transform, mapping a cross-sectional image to a sinogram over $3600$ projection angles with $956$ detector pixels. This is a severely ill-posed problem in which small sinogram perturbations can produce large reconstruction artifacts. Reducing the radiation dose lowers the photon count and amplifies this ill-posedness, making robust low-dose reconstruction a real-world problem.

Raw sinograms are preprocessed via flat-dark field correction and a negative log transform to convert intensity measurements into line integrals. The UNet input is a filtered backprojection (FBP) reconstruction of the preprocessed sinogram, computed using \verb|tomosipo| \cite{hendriksen_tomosipo_2021} and \verb|astra| \cite{van_aarle_astra_2015} with the scanner's cone-beam geometry, followed by normalization. The inverse is parameterized by a UNet with $16$ base channels. 

Perturbed inputs for the DRO objective are obtained by adding Gaussian noise of standard deviation $\delta$ to the flat-dark corrected sinogram before the negative log transform and FBP reconstruction. The DRO-UNet is trained on noise-free FBP reconstructions. The MSE-UNet is trained on FBP reconstructions from Poisson-corrupted sinograms, simulated by scaling by photon count $I_0=10^3$, drawing from the Poisson distribution, rescaling by $I_0^{-1}$, and applying the negative log transform and FBP. The same procedure is used at test time. The photon count $I_0$ reflects the number of photons reaching the detector per measurement, with a lower count corresponding to a lower radiation dose and a noisier signal due to the reduced number of detected counts. This Poisson corruption corresponds to the noise model in Example~\ref{ex:noise}(b), with the photon count $I_0$ playing the role of the intensity parameter: decreasing $I_0$ increases noise.

For the DRO formulation, we set $\delta=0.002$ and $\varepsilon=0.1$, with $\lambda$ determined via bisection over $[1,100]$ with tolerance $1.0$ evaluated on a $10\%$ training subsample for efficiency. We use a learning rate $\gamma=3\times10^{-4}$, batch size $3$, and the SG estimator with $4$ samples per iteration. We use $3500$ training samples, $750$ for validation, and $750$ for testing.

\begin{figure}[tp]
    \centering
    \includegraphics[width=0.8\linewidth]{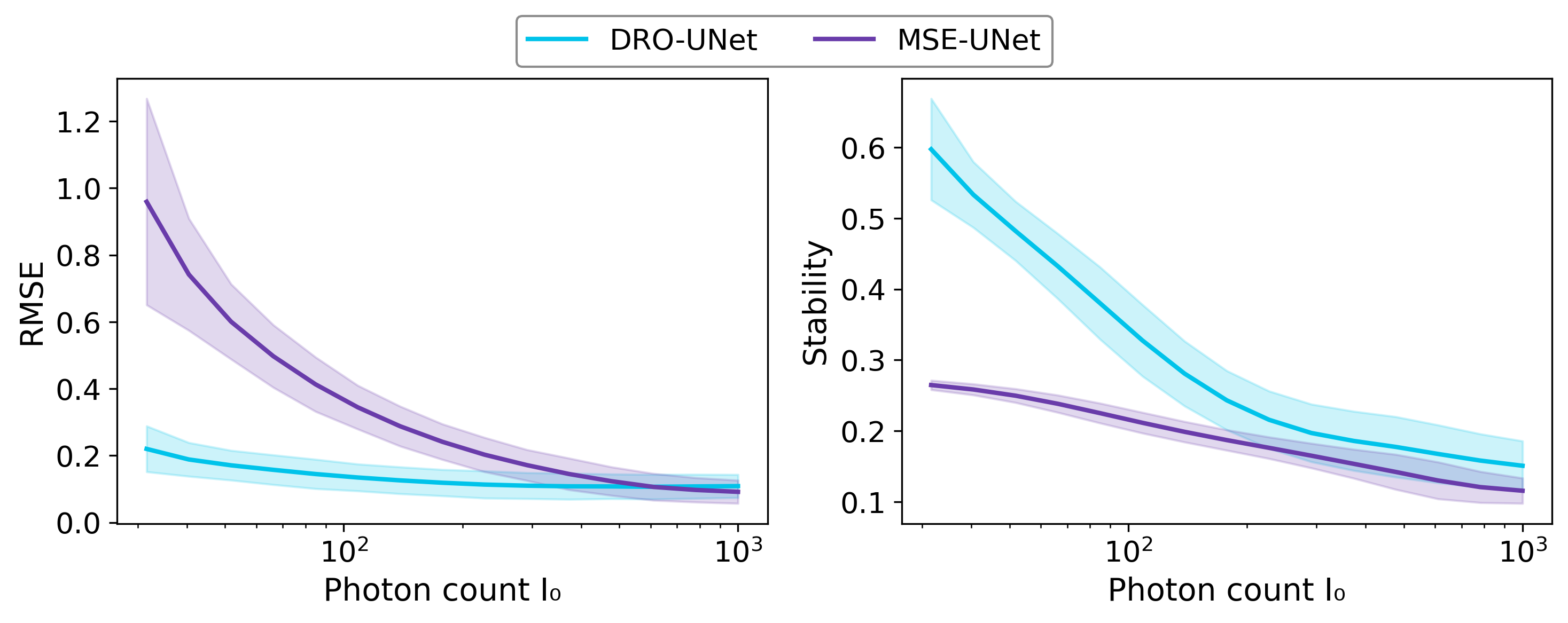}
    \caption{2DeteCT CT reconstruction metrics as the photon count $I_0$ increases, with panels and shaded bands as in Figure~\ref{fig:diff-metrics}. Lower $I_0$ means higher noise.}
    \label{fig:ct_rmse}
\end{figure}

\begin{figure}[tp]
    \centering
    \begin{subfigure}[t]{\linewidth}
        \centering
        \includegraphics[height=10cm]{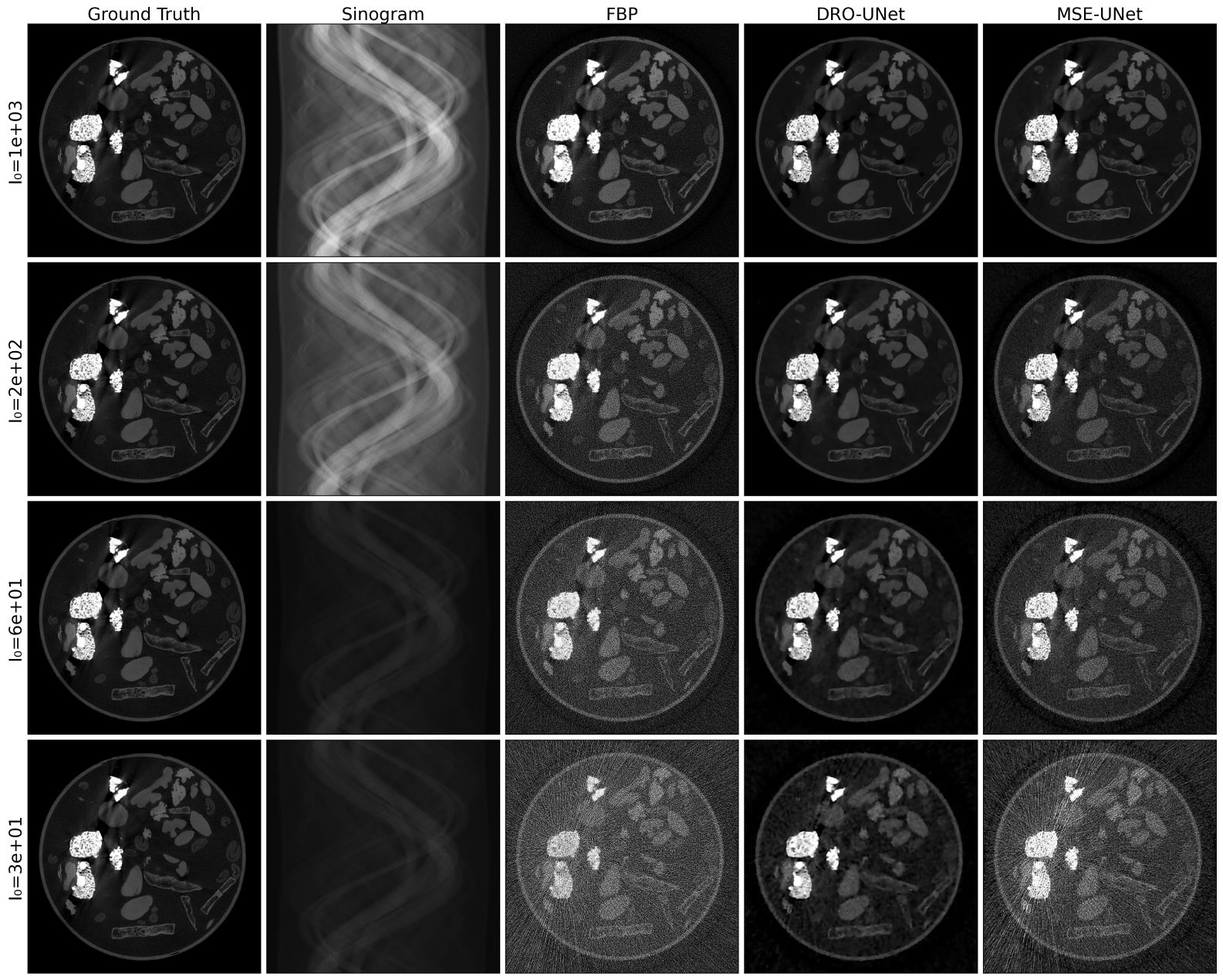}
    \caption{Reconstructions}
    \end{subfigure}\hfill
    \begin{subfigure}[t]{0.48\linewidth}
        \centering
        \includegraphics[height=10cm]{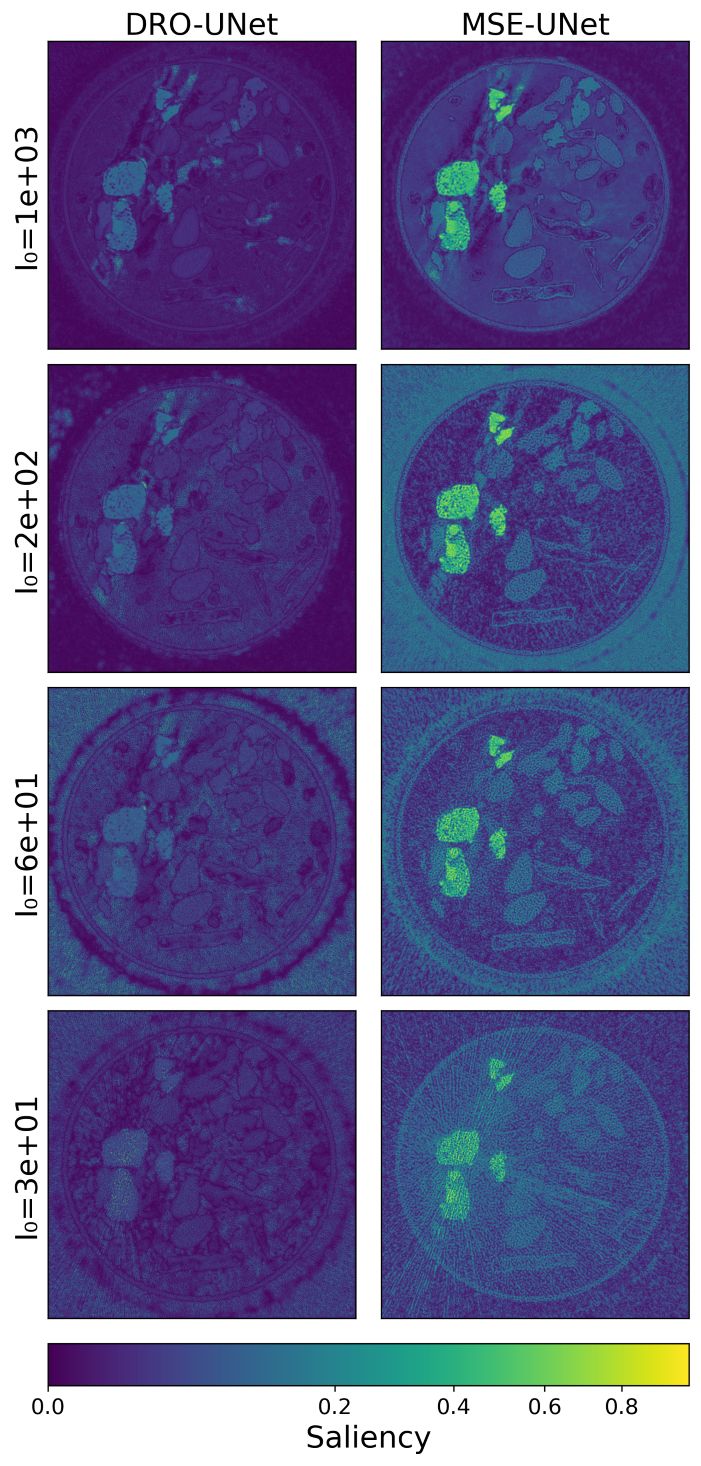}
    \caption{Saliency}
    \end{subfigure}\hfill
    \begin{subfigure}[t]{0.48\linewidth}
        \centering
        \includegraphics[height=10cm]{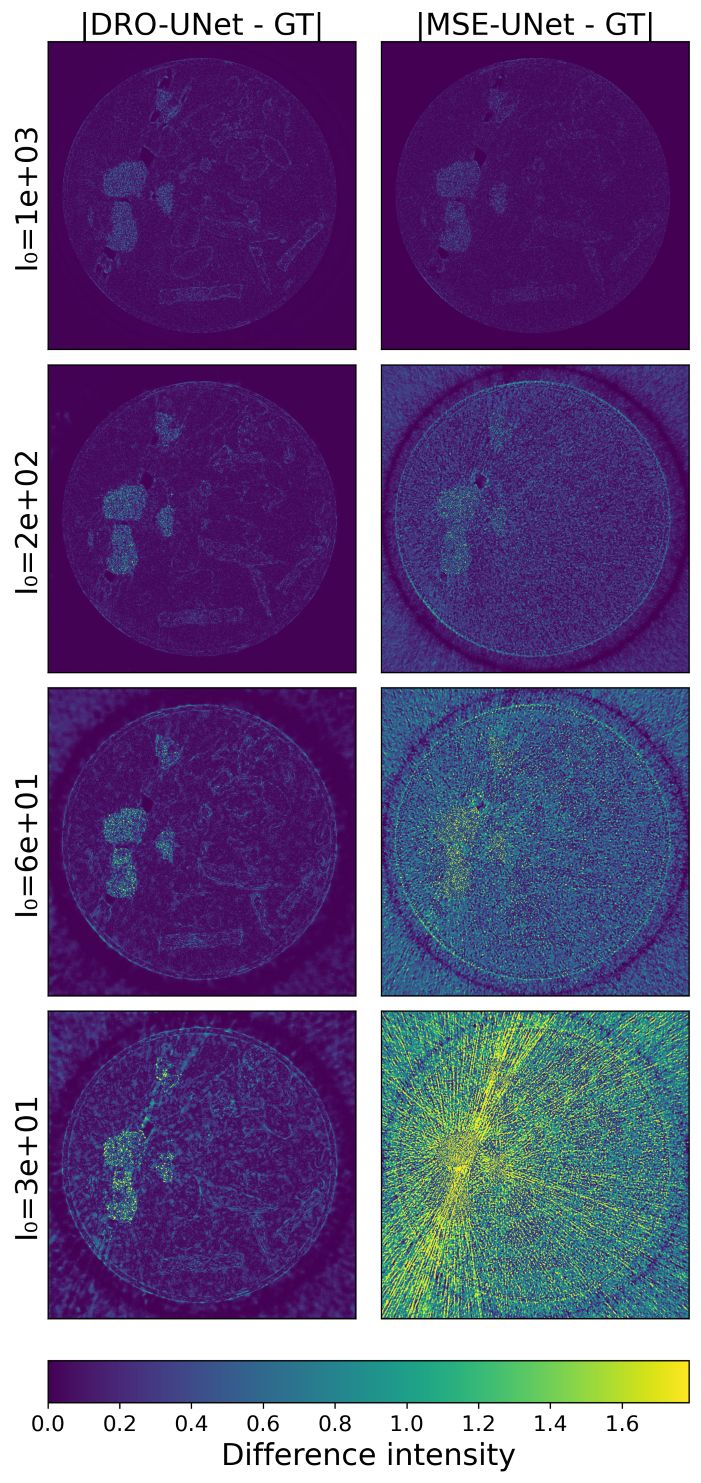}
    \caption{Absolute difference w.r.t. ground truth}
    \end{subfigure}
    \caption{2DeteCT CT reconstructions, saliency, and difference at photon counts from high-dose ($I_0=10^3$, top row) to low-dose ($I_0=30$, bottom row). Reconstruction columns (left to right): ground truth, sinogram, FBP input, DRO-UNet, MSE-UNet. Higher saliency intensities indicate the input pixels the model is most sensitive to during reconstruction.}
    \label{fig:ct_recon_sal}
\end{figure}

Figure~\ref{fig:ct_recon_sal} shows reconstruction results for decreasing photon counts from high-dose ($I_0=10^3$) to low-dose ($I_0=3\times10^1$). At high photon counts, both models perform comparably, closely matching the ground truth. As $I_0$ decreases, the MSE-UNet degrades significantly with noise artifacts and structural loss, while the DRO-UNet maintains considerably better image quality. The saliency maps in Figure~\ref{fig:ct_recon_sal} show that the DRO-UNet focuses on structurally meaningful regions such as shape boundaries, while the MSE-UNet shows increasingly diffuse and unstructured sensitivity as noise increases, and while focusing on diagonal noise artifacts, the DRO-UNet ignores. The maps visualizing the difference between the reconstruction and the ground truth (GT) corroborate this: DRO-UNet errors remain spatially concentrated and small, whereas MSE-UNet errors spread across the entire image at low $I_0$.

Figure~\ref{fig:ct_rmse} reflects this trend quantitatively, where the DRO-UNet consistently achieves lower RMSE with the gap widening at low $I_0$. The stability curves are more nuanced:  the MSE-UNet seems more stable than the DRO-UNet, likely because it maps all inputs to a similar blurred output, giving artificially low sensitivity to perturbations.

\FloatBarrier
\section{Conclusion}
We introduced structured DRO, a distributionally robust optimization framework for inverse problems in which adversarial perturbations are constrained to a structured subset $K_s \subseteq P(X \times Y)$ aligned with the data-acquisition process. We established strong duality for general $K_s$ and derived explicit finite-dimensional dual representations for perturbations in the joint, marginal, and conditional distributions. For perturbations in $P(Y|X)$, the explicit bound yields a Tikhonov-type regularization term on the Lipschitz constant of the reconstruction operator. Numerical comparison with joint DRO showed that the structured bound is less conservative for well-posed problems, with the decomposition into fidelity and regularization contributions making the source of conservatism in each method interpretable. In the linear case, this regularization is visible spectrally in the numerical experiments: the learned inverse truncates at the intrinsic dimension of the data and recovers a data-driven version of truncated-SVD regularization. Learned reconstruction experiments on differentiation, deblurring, and sinogram-to-CT reconstruction further demonstrated improved robustness, interpretability, and stability over MSE baselines, with out-of-distribution performance consistently improving over standard DRO and MSE baselines as noise levels increase. 

On the computational side, more faithful approximations of the structured Wasserstein constraint beyond the combination of the conditional Wasserstein distance and the Sinkhorn distance used here require further investigation. A natural theoretical extension concerns operator learning: rather than assuming a fixed forward operator $H$, one can parameterize $H$ by a neural operator and ask for a reconstruction operator $G$ that is robust to uncertainty in the learned forward model. The framework already accommodates stochastic forward operators via the robust simulator of Section~\ref{sec:robust-simulator}, making it suited to this setting. The framework also connects naturally to generative modeling: the conditional Wasserstein distance of Corollary~\ref{cor:w-cond} is used in the objective in Bayesian OT flow matching \cite{chemseddine_conditional_2025}, suggesting that a flow trained under a structured DRO objective would yield a learned distribution that is robust to structured perturbations. Finally, extending the framework to dynamic inverse problems where the data distribution $\mu_t$ evolves over time leads to a min-max problem over flows of measures rather than static distributions, connecting naturally to Wasserstein gradient flows.

\section*{Acknowledgments}
M.C.'s research is supported by the NWO-Vidi grant \emph{SPARGO: Exploring and Exploiting the Geometric Landscape of Infinite-Dimensional Sparse Optimization} (Grant Number VI.Vidi.243.200). C.B. acknowledges financial support from EU EFRO OPoost (No. OOST-00103).

\bibliographystyle{siamplain}
\bibliography{references_a}

\appendix
\section{Robustness in $P(X\times Y)$ (MNIST)}\label{supp:additional-examples}
\label{sec:num-joint}
To validate the framework under joint perturbations in $P(X\times Y)$, we repeat the MNIST deblurring experiment from Section~\ref{sec:mnist} with a reference measure that perturbs both the input and the measurement simultaneously. Specifically, we replace the reference measure with $\eta_{S|S=(x,y)}(u,v)=\eta_X(u)\otimes\eta_{Y|X=x}(v)$ with $\eta_{Y|X=x}$ and $\eta_X$ Lebesgue measures, so that the worst-case adversary is now free to transport mass in both the $X$- and $Y$-directions. All other hyperparameters are kept identical to Section 6.3. The resulting dual objective is
\begin{align*}
    I = \inf_{G\in \mathcal{G}} \inf_{\lambda \geq 0} \lambda \hat \varepsilon +  \lambda\delta\int_S \log \int_Y\int_X \exp\left(\frac{\|u-G(v)\|^2_2}{\lambda \delta}\right)d\mathcal{N}(x, \delta^2I)(u)\ d\mathcal{N}(Hx, \delta^2I)(v) \ d\mu^*(x,y).
\end{align*}

Figures \ref{fig:mnist_joint_recon} and \ref{fig:mnist_joint_metrics} compare the $P(Y|X)$ and $P(X\times Y)$ DRO-UNets. The joint DRO model achieves slightly better RMSE than its conditional counterpart at low noise, but degrades slightly faster as noise increases, reflecting the additional conservatism introduced by allowing the adversary to perturb the input distribution as well. These differences however are very small but might be larger when using for example an MLP with less implicit robustness. In any case, the results are in line with the expected ordering from Section~\ref{sec:comparison}.

\begin{figure}[t]
    \centering
    \begin{subfigure}[t]{0.48\linewidth}
        \centering
        \includegraphics[height=7cm]{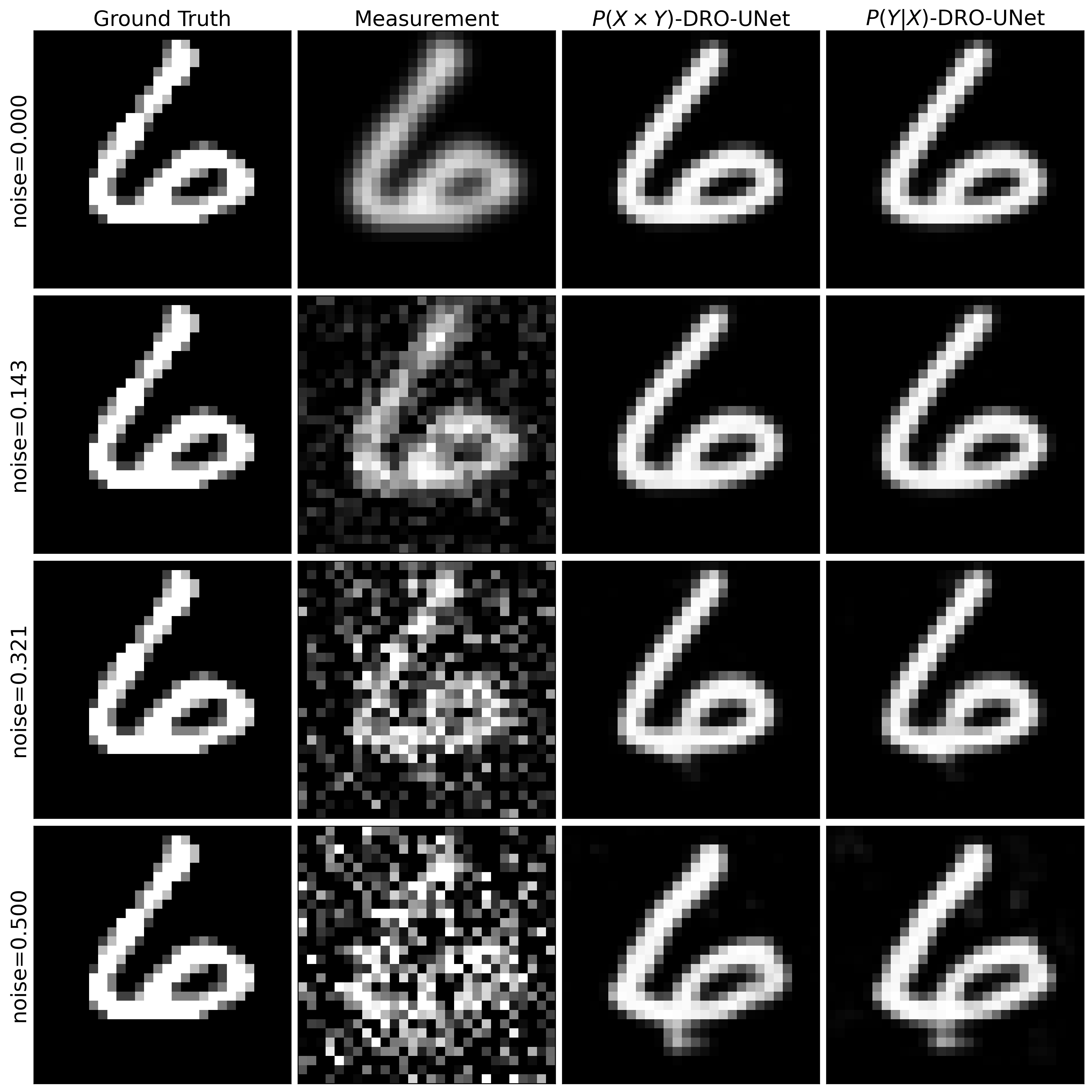}
    \end{subfigure}
    \hfill
    \begin{subfigure}[t]{0.48\linewidth}
        \centering
        \includegraphics[height=7cm]{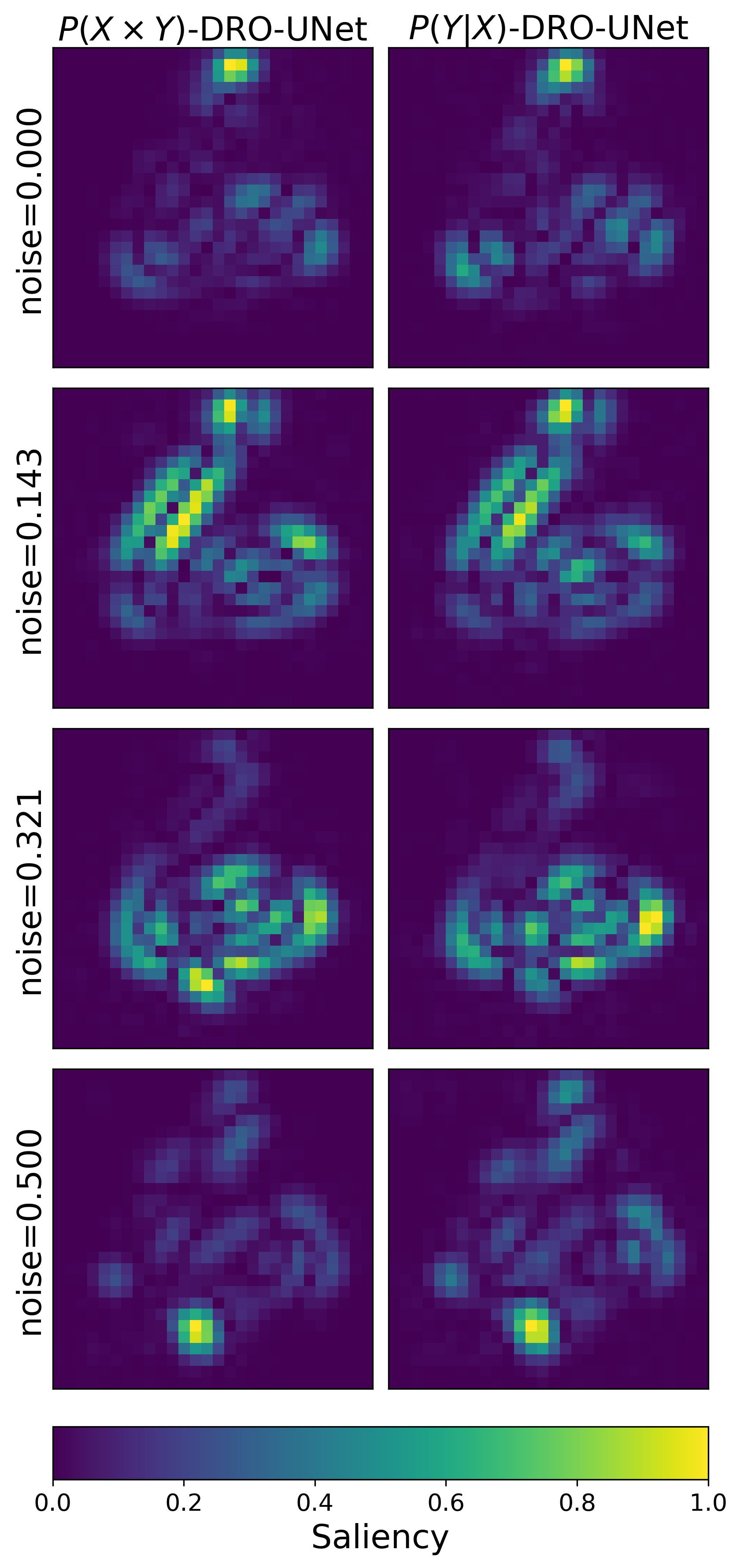}
    \end{subfigure}
    \caption{MNIST reconstructions and saliency under increasing Gaussian noise. Comparison of robustness in $X\times Y$ and $Y|X$. Reconstruction columns (left to right): ground truth, noisy measurement, $P(X\times Y)$-DRO-UNet, $P(Y|X)$-DRO-UNet. Higher saliency intensities indicate the input pixels the model is most sensitive to during reconstruction.}
    \label{fig:mnist_joint_recon}
\end{figure}

\begin{figure}[t]
        \centering
        \includegraphics[width=0.8\linewidth]{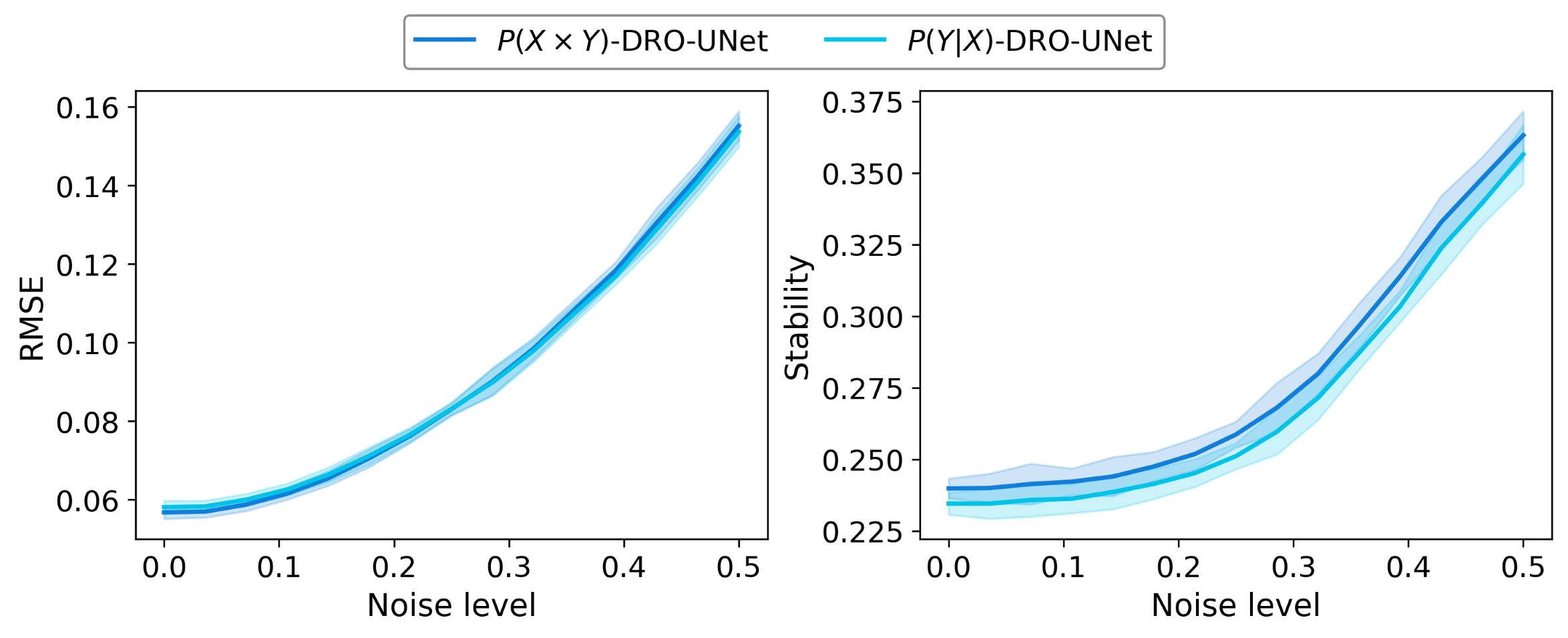}
    \caption{MNIST reconstruction metrics. Comparison of robustness in $X\times Y$ and $Y|X$.}
    \label{fig:mnist_joint_metrics}
\end{figure}
\section{Proofs for strong duality in Section~\ref{sec:duality}}
\subsection{General strong duality}\label{supp:general-duality-proof}
This section details the proof for Theorem \ref{thm:w-padro-strong-duality}. The strategy of the proof follows the one in \cite{blanchet_quantifying_2019}. First off, we need to define certain sets and functionals and introduce some lemmas, all in preparation for the application of the Fenchel Duality Theorem \cite[p.~201]{luenberger_optimization_1997}.
Let $\C:=C(S\times S)$ represent the vector space of continuous functions equipped with the supremum norm, and recognize its dual $\C^* = M(S\times S)$, the space of finite signed Borel measures on $S$ equipped with the total variation norm.

For simplicity, we recall that the primal problem is 
\begin{align*}
    I := \sup_{\pi_{S|S} \in \Okc} I(\pi_{S|S}, G)
\end{align*}
with the notatons given in Section \ref{sec:duality}.
Moreover, we define the dual form as $J$, such that
\begin{align*}
    J(\lambda, h) := \lambda \varepsilon + \sup_{\pi_{S|S} : S \rightarrow K_s} \int_{S \times S} h(s,r)\ d\pi_{S|S=s}(r)d\mu^*(s), 
    \qquad J:= \inf_{(\lambda, h) \in \Delta_{G,\ell,c}} J(\lambda,h),
\end{align*}
where we remind the reader that 
\begin{align*}
    \D := \bigg\{ (\lambda,h): & \, h\in \mathcal{C}, \lambda \geq 0,\ \int_S\int_S h(s,r) d\pi_{S|S=s}(r)d\mu^*(s) \\
    &\geq \  \int_S\int_S \ell(r, G) - \lambda c(s, r) d\pi_{S|S=s}(r)d\mu^*(s)   \quad \forall \pi_{S|S}: S\rightarrow K_s \bigg\}.
\end{align*}
Define the convex set of functions
\begin{align}\label{eq:B}
    B:= \bigg\{ g \in \C: \int_S\int_S &g(s,r)d\pi_{S|S=s}(r)d\mu^*(s) \\
    &\geq \int_S\int_S\ell(r,G) d\pi_{S|S=s}(r) d\mu^*(s), \ \forall \pi_{S|S}: S\rightarrow K_s \bigg\}\nonumber.
\end{align}
Define also the functional $\Theta: \mathcal{C} \to \R$ as
\begin{align*} 
    \Theta (g) &:= \inf_{(\lambda, h) \in D(g)} \left\{ \lambda \varepsilon + \sup_{\pi_{S|S}: S\rightarrow K_s} \int_S \int_S h(s,r) d\pi_{S|S=s}(r) d\mu^*(s) \right\},
\end{align*}
with 
\begin{align*} 
    D(g):= \bigg\{ (\lambda, h):  h\in \mathcal{C}, \, & \lambda \geq 0, \int_S \int_S g(s,r)d\pi_{S|S=s}(r)d\mu^*(s) \\
    &= \int_S \int_S h(s,r) + \lambda c(s,r) d\pi_{S|S=s}(r)d\mu^*(s), \ \forall \pi_{S|S}: S\rightarrow K_s \bigg\}.
\end{align*}
In the following lemma we verify that $\Theta$ is convex.

\begin{lemma}
    The function $\Theta : 
    \mathcal{C} \rightarrow \R$ is convex. 
\end{lemma}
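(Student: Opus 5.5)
The plan is the standard argument that the infimal projection of a jointly convex function is convex. Fix $g_1,g_2\in\C$ and $t\in[0,1]$, and set $g_t:=tg_1+(1-t)g_2$. We want
\begin{align*}
\Theta(g_t)\le t\Theta(g_1)+(1-t)\Theta(g_2).
\end{align*}
If either $\Theta(g_i)=+\infty$ (that is, $D(g_i)=\emptyset$), there is nothing to prove. So we may assume both are finite. Fix $\eta>0$ and choose near-optimal pairs $(\lambda_i,h_i)\in D(g_i)$ with
\begin{align*}
J(\lambda_i,h_i)\le\Theta(g_i)+\eta,
\end{align*}
where $J(\lambda,h)$ is the dual functional defined above.

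\textbf{Step 1: $D(g)$ respects convex combinations.} Set $(\lambda_t,h_t):=(t\lambda_1+(1-t)\lambda_2,\;th_1+(1-t)h_2)$. Then $h_t\in\C$ and $\lambda_t\ge0$. The defining constraint of $D(g)$ is an equality that is affine in $(g,h,\lambda)$ for each fixed kernel $\pi_{S|S}:S\to K_s$. Integrating is linear, and all integrals are finite because $S$ is compact, $c$ is continuous and the $h_i$ are bounded. Taking the combination of the two identities for $g_1$ and $g_2$ therefore gives $(\lambda_t,h_t)\in D(g_t)$.

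\textbf{Step 2: $J$ is convex in $(\lambda,h)$.} The map $(\lambda,h)\mapsto \lambda\varepsilon+\sup_{\pi_{S|S}}\int\int h\,d\pi_{S|S=s}\,d\mu^*$ is a linear term plus a pointwise supremum of linear functionals of $h$, so it is convex. Concretely, for any admissible kernel,
\begin{align*}
\int\!\!\int h_t\,d\pi_{S|S=s}\,d\mu^* = t\int\!\!\int h_1\,d\pi_{S|S=s}\,d\mu^*+(1-t)\int\!\!\int h_2\,d\pi_{S|S=s}\,d\mu^*,
\end{align*}
and the right-hand side is at most $t\sup(\cdot)+(1-t)\sup(\cdot)$. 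Taking the supremum over kernels yields $J(\lambda_t,h_t)\le tJ(\lambda_1,h_1)+(1-t)J(\lambda_2,h_2)$.

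\textbf{Step 3: conclude.} Combining the two steps,
\begin{align*}
\Theta(g_t)\le J(\lambda_t,h_t)\le t\Theta(g_1)+(1-t)\Theta(g_2)+\eta.
\end{align*}
Letting $\eta\downarrow0$ gives the claim.

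\textbf{Main obstacle: well-definedness.} Some care is needed to read the conclusion as a statement in $\R$ rather than the extended reals.
\begin{itemize}
\item $D(g)$ is never empty: the pair $(0,g)\in D(g)$, so $\Theta(g)<\infty$, with $\Theta(g)\le\sup_{\pi_{S|S}}\int\int g\,d\pi_{S|S=s}\,d\mu^*\le\|g\|_\infty$.
\item $\Theta(g)$ could a priori equal $-\infty$. In that case the inequality holds trivially in the extended sense, and convexity as an extended-real-valued function is all that the later Fenchel duality step needs.
\item For lower boundedness, first note that every term is at least $-\|h\|_\infty$, so the supremum is finite. Then, using the equality constraint along any fixed admissible kernel with finite transport cost, one gets $J(\lambda,h)\ge\lambda\varepsilon-\lambda\int\int c\,d\pi_{S|S=s}\,d\mu^*+\int\int g\,d\pi_{S|S=s}\,d\mu^*$. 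Taking a kernel in $\Omega_{K,c,\varepsilon}$, which is assumed nonempty, this gives $J(\lambda,h)\ge\int\int g\,d\pi_{S|S=s}\,d\mu^*>-\infty$.
\end{itemize}
I would include this last remark so that $\Theta:\C\to\R$ is genuinely real-valued, as the statement asserts.
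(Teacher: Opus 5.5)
Your argument is correct and is the paper's proof: take near-optimal $(\lambda_i,h_i)\in D(g_i)$, observe that their convex combination lies in $D(tg_1+(1-t)g_2)$ because the defining constraint is affine, and conclude by convexity of $J$. Your well-definedness remarks ($(0,g)\in D(g)$, and $\Theta(g)\ge\int\!\!\int g\,d\pi_{S|S=s}\,d\mu^*$ for any $\pi_{S|S}\in\Omega_{K,c,\varepsilon}$) usefully supply the finiteness of $\Theta$ that the paper's choice of near-optimal pairs tacitly assumes. One small correction: the case $\Theta(g_1)=-\infty$ would not be ``trivial'', since it needs the same combination argument, but your lower bound shows that this case never occurs.
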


\begin{proof}
    To show $\Theta$ is convex, first note that $J$ is convex. We can then rewrite 
\begin{align*}
    \Theta(g) = \inf_{(\lambda, h) \in D(g)} J(\lambda, h).
\end{align*}
Let $g_1,g_2 \in \mathcal{C}$ and $g = tg_1 + (1-t)g_2$ for $t \in (0,1)$. Let $\varepsilon >0$ be an arbitrary positive number and
$(\lambda_1, h_1) \in D(g_1)$ and $(\lambda_2, h_2) \in D(g_2)$ such that
\begin{align*}
    \Theta(g_1) \geq J(\lambda_1, h_1) - \varepsilon, \quad
    \Theta(g_2) \geq  J(\lambda_2, h_2) - \varepsilon.
\end{align*}
Now consider 
\begin{align*}
    \Theta(tg_1 + (1-t)g_2) = \inf_{(\lambda, h) \in D(tg_1 + (1-t)g_2)} J(\lambda, h).
\end{align*}
Note that
\begin{align*}
    (\lambda_3, h_3) := t(\lambda_1, h_1) + (1-t)(\lambda_2, h_2) \in D(tg_1 + (1-t)g_2).
\end{align*}
Therefore, by convexity of $J$,
\begin{align*}
    J(\lambda_3, h_3) \leq t J(\lambda_1, h_1) + (1-t)J(\lambda_2, h_2)
\end{align*}
and thus
\begin{align*}
    \Theta(tg_1 + (1-t)g_2) &= \inf_{(\lambda, h) \in D(tg_1 + (1-t)g_2)} J(\lambda, h)
    \leq J(\lambda_3, h_3) \\
     & \leq t J(\lambda_1, h_1) + (1-t)J(\lambda_2, h_2) 
     \leq t \Theta(g_1) + (1-t) \Theta(g_2) + 2\varepsilon.
\end{align*}
Since $\varepsilon$ is arbitrary the convexity of $\Theta$ follows.
\end{proof}

\begin{lemma}\label{lemma:fenchel-requirements}
    Let $S$ be a compact Polish space, and assume $c:S\times S\to\R_+$ is non-negative lower semi-continuous such that $c(s,r)=0$ if and only if $s=r$ and $\ell$ is continuous. Define  $B$ as in \eqref{eq:B}. Then, $B$ has non-empty interior.
\end{lemma}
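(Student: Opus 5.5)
The plan is to exhibit an explicit interior point of $B$ in the sup-norm topology of $\C = C(S\times S)$. By compactness of $S$ and continuity of $\ell(\cdot,G)$, the loss is bounded: set $M := \max_{r\in S} \ell(r,G) < \infty$. I propose the constant function $g_0 \equiv M+1$ as the interior point, with the open ball $\{g\in\C : \|g-g_0\|_\infty < 1\}$ as the neighbourhood to verify lies in $B$.

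First, for any $g$ in this ball we have the pointwise bound $g(s,r) > M \geq \ell(r,G)$ for all $(s,r)\in S\times S$. Second, I integrate this inequality against an arbitrary admissible kernel. For each $\pi_{S|S}: S\to K_s$, the measure $A\mapsto \int_S \pi_{S|S=s}(A_s)\,d\mu^*(s)$ on $S\times S$ (that is, $\mu^*\otimes\pi_{S|S}$) is a probability measure. Both integrands are bounded and measurable, since $g$ is continuous on a compact set and $\ell\ge 0$ is bounded by $M$. Monotonicity of the integral then gives
\begin{align*}
\int_S\int_S g(s,r)\,d\pi_{S|S=s}(r)\,d\mu^*(s) \geq \int_S\int_S \ell(r,G)\,d\pi_{S|S=s}(r)\,d\mu^*(s).
\end{align*}
Hence every such $g$ satisfies the defining constraint of $B$ for all admissible kernels, so the whole ball is contained in $B$ and $g_0$ is an interior point.

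The only step needing care is the measurability and well-definedness of the iterated integrals, i.e.\ that $s\mapsto\int g(s,r)\,d\pi_{S|S=s}(r)$ is $\mu^*$-measurable. This follows from the standard fact that integrals of bounded jointly measurable functions against a measurable kernel are measurable, and it is already implicit in the definition of $B$ and of $\Okc$. The hypotheses on $c$ (non-negativity, lower semicontinuity, vanishing only on the diagonal) are not needed for this lemma; they will enter later, in the Slater-type condition for the constraint involving $\lambda$ in the Fenchel duality argument. Note also that convexity of $B$ is immediate, since the constraints are linear in $g$, so together with nonempty interior the lemma supplies exactly the hypotheses needed for the Fenchel duality theorem.
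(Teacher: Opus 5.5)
Your proposal is correct and follows essentially the same argument as the paper. The paper also takes the constant function $k\equiv M+1$ with $M=\sup_{r\in S}\ell(r,G)$ and shows that a sup-norm ball around it lies in $B$, because every function in the ball dominates $\ell$ pointwise; it uses radius $\tfrac12$ rather than your $1$, which is immaterial.
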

\begin{proof}
Since $S$ is compact and
$\ell$ is continuous, there exists
\begin{align*}
M:=\sup_{r\in S}\ell(r,G)<+\infty .
\end{align*}
Define $k(s,r):=M+1$ for $ (s,r)\in S\times S$.
Then \(k\in C(S\times S)\). Moreover, for every $\pi_{S|S} :S \rightarrow K_s$ it holds that
\begin{align*}
\int_{S\times S} k(s,r)\,d\pi_{S|S=s}(r)d\mu^*(s)
=
M+1
\geq
\int_{S\times S}\ell(r,G)\,d\pi_{S|S=s}(r)d\mu^*(s),
\end{align*}
and therefore $k\in B$.
We claim that $k\in \operatorname{int}(B)$. Indeed, let
$\eta\in C(S\times S)$ be such that
\begin{align*}
\|\eta-k\|_\infty<\frac12 .
\end{align*}
Then, for every \((s,r)\in S\times S\),
\begin{align*}
\eta(s,r)
\geq
k(s,r)-\frac12
=
M+\frac12
\geq
\ell(r),
\end{align*}
so that $\eta \in B$ and thus $k\in\operatorname{int}(B)$.

\end{proof}

\begin{lemma}\label{lem:close}
    The set $\Omega_K$ defined as 
       \begin{align*}
  \Omega_K = \{\pi_{S|S} \otimes \mu^* : \pi_{S|S=s} \in K_s\}  
\end{align*}
    is weak* closed.
\end{lemma}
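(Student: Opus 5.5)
We show that $\Omega_K$ is sequentially weak* closed. Since $S$ is a compact Polish space, $C(S\times S)$ is separable, so bounded subsets of $M(S\times S)$ are metrizable in the weak* topology. By Banach--Alaoglu and Krein--Smulian, it therefore suffices to show that $\Omega_K$ is closed under weak* convergence of sequences. All elements of $\Omega_K$ are probability measures, hence lie in the unit ball.

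Take $\gamma_n=\mu^*\otimes\pi^n_{S|S}\in\Omega_K$ with $\gamma_n\rightharpoonup^*\gamma$. Testing against functions $f(s)$ shows that the first marginal of $\gamma$ is $\mu^*$. Since $S$ is Polish, disintegration gives a measurable kernel $\pi_{S|S}$ with $\gamma=\mu^*\otimes\pi_{S|S}$. The whole task is to show $\pi_{S|S=s}\in K_s$ for $\mu^*$-a.e.\ $s$.

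The plan is to use convexity and weak* closedness of each $K_s$ through a Hahn--Banach characterization. Define the support function $\sigma_s(f):=\sup_{\nu\in K_s}\int f\,d\nu$ for $f\in C(S)$. Because $K_s$ is closed and convex in $P(S)$, we have $\nu\in K_s$ if and only if $\int f\,d\nu\le\sigma_s(f)$ for all $f$ in a countable dense subset $\{f_k\}\subset C(S)$. For a fixed $f_k$ and any measurable $A\subset S$, test $\gamma_n$ against $\mathbf 1_A(s)f_k(r)$:
\begin{align*}
\int_A\int f_k\,d\pi^n_{S|S=s}\,d\mu^*(s)\le\int_A\sigma_s(f_k)\,d\mu^*(s).
\end{align*}
We want to pass to the limit in $n$ and obtain the same inequality for $\pi$. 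Since $A$ is arbitrary, this gives $\int f_k\,d\pi_{S|S=s}\le\sigma_s(f_k)$ for $\mu^*$-a.e.\ $s$. A countable intersection of full-measure sets then gives $\pi_{S|S=s}\in K_s$ a.e.

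The main obstacle is the limit passage, because $\mathbf 1_A(s)f_k(r)$ is not continuous. I would handle it in two ways. First, replace $\mathbf 1_A$ by continuous $0\le\psi\le1$ in $C(S)$ and work with a countable dense family of such $\psi$. Second, use the fact that the first marginal is fixed at $\mu^*$: for product integrands $\psi(s)f(r)$ with $\psi$ only bounded measurable, weak* convergence of couplings with a fixed first marginal still passes to the limit. This follows by approximating $\psi$ in $L^1(\mu^*)$ by continuous functions, with the error controlled uniformly by $\|f\|_\infty\|\psi-\psi_j\|_{L^1(\mu^*)}$.

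Two technical points remain. One must check that $s\mapsto\sigma_s(f)$ is $\mu^*$-measurable. The natural route is to require the set-valued map $s\mapsto K_s$ to be measurable, which is implicitly assumed so that kernels into $K_s$ make sense, and then invoke measurable selection as in \cite{rockafellar_integral_1976}. One must also check that $\sigma_s(f)$ is integrable, which holds because it is bounded by $\|f\|_\infty$.
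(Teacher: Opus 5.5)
Your proposal is correct and follows essentially the same route as the paper: disintegrate the weak* limit over the fixed first marginal $\mu^*$, encode membership in $K_s$ through the support function $\sigma_{K_s}$, pass to the limit in $\int \eta(s)\int\varphi\,d\pi^n_{S|S=s}\,d\mu^*(s)\le\int\eta(s)\,\sigma_{K_s}(\varphi)\,d\mu^*(s)$, and conclude using a countable dense family in $C(S)$. The differences are cosmetic. You test against indicators via $L^1(\mu^*)$-approximation, whereas the paper uses nonnegative continuous $\eta$, which is your first option. You also justify explicitly that sequential closedness suffices and that $s\mapsto\sigma_{K_s}(\varphi)$ must be measurable, whereas the paper uses both tacitly.
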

\begin{proof}
    Consider a sequence $\mu_n = \pi_{S|S}^n \otimes \mu^*$ such that $\mu_n \rightarrow \mu$ weakly*. Note that by weak* continuity of the projection, the first marginal of $\mu$ is $\mu^*$. Therefore, we can disintegrate $\mu$ as $\mu = \pi_{S|S} \otimes \mu^*$ for some $\pi_{S|S} : S \rightarrow P(S)$. It remains to prove that then $\pi_{S|S=s} \in K_s$ for $\mu^*$-a.e. $s\in S$.

    Define the support function of $K_s$ as $\sigma_{K_s} : C(S) \rightarrow \R$ with
    \begin{align*}
        \sigma_{K_s}(\varphi) = \sup_{\nu \in K_s} \int \varphi d\nu
    \end{align*}
    and note that since $K_s$ is convex and weak* closed it holds \cite[Theorem 7.51]{aliprantis_infinite_2006}
    \begin{align*}
        K_s=\left\{\mu\in P(S):\int_S \varphi\,d\mu\leq \sigma_{K_s}(\varphi)\text{ for every } \varphi \in C(S) \right\}.
    \end{align*}
    Therefore since $\pi^n_{S|S=s} \in K_s$ for $\mu^*$-a.e. $s\in S$ it also holds that for almost every $s \in S$ we have that $\int_S \varphi(r) d\pi^n_{S|S=s} (r) \leq \sigma_{K_s}(\varphi)$ for every $\varphi \in C(S)$. This implies, by passing to the limit for $n \rightarrow +\infty$ that for all non-negative $\eta \in C(S)$ and for all $\varphi \in C(S)$ it holds that 
\begin{align*}
\int_S \eta(s)
\left[
\int_S \varphi(r)\,d\pi_{S|S=s}(r)-\sigma_{K_s}(\varphi)
\right]d\mu^*(s)
\leq 0
\end{align*}
In particular, for all $\varphi \in C(S)$ 
\begin{align}\label{eq:ppl}
   \int_S \varphi(r)\,d\pi_{S|S=s}(r)
\leq \sigma_{K_s}(\varphi)
\end{align}
for $\mu^*-$a.e. $s\in S$.  Since $C(S)$ is separable ($S$ is supposed to be a compact Polish space) we can choose a countable dense subset of $C(S)$ denoted by $(\varphi_i)_{i \in I}$ and define by $A_i$ a set such that $\mu^*(A_i) = 1$ and \eqref{eq:ppl} holds for $\varphi = \varphi_i$. Considering $A = \bigcap_i A_i$ we thus have that $\mu^*(A) = 1$ and for every $s \in A$
\begin{align}\label{eq:ppl2}
   \int_S \varphi_i(r)\,d\pi_{S|S=s}(r)
\leq \sigma_{K_s}(\varphi_i)
\end{align}
for every $i \in I$. Since $(\varphi_i)$ is dense in $C(S)$, by a standard density argument we can conclude that for all $s\in A$ it holds that 
\begin{align}\label{eq:ppl3}
   \int_S \varphi(r)\,d\pi_{S|S=s}(r)
\leq \sigma_{K_s}(\varphi)
\end{align}
for every $\varphi \in C(S)$, implying that $\pi_{S|S=s}\in K_s$ for $\mu^*$-a.e. $s \in S$.
\end{proof}

\begin{lemma}\label{lemma:B-conjugate}
    Let $S$ be a compact Polish space, $\ell: S \to \R$ continuous and $B$ defined as in \eqref{eq:B}. If $\mu$ is not a positive measure, then
    \begin{align}
        \inf_{g \in B} \int_S\int_S g(s,r) \ d\mu = - \infty
    \end{align}
\end{lemma}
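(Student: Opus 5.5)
The plan is to exploit the fact that $B$ is stable under adding arbitrary non-negative continuous functions: if $g\in B$ and $f\in\C$ with $f\ge 0$, then $g+tf\in B$ for every $t\ge 0$. Indeed, for any admissible kernel $\pi_{S|S}:S\to K_s$ we have $\int_S\int_S f\,d\pi_{S|S=s}\,d\mu^*\ge 0$, so the defining inequality of $B$ is preserved. Lemma~\ref{lemma:fenchel-requirements} supplies a concrete element $k\equiv M+1$ of $B$, so $B$ is non-empty. Hence $g_t:=k+tf\in B$ for all $t\ge0$, and
\begin{align*}
\int_{S\times S} g_t\,d\mu=(M+1)\,\mu(S\times S)+t\int_{S\times S} f\,d\mu .
\end{align*}

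The key step is to produce, whenever $\mu\in M(S\times S)$ is not a positive measure, a continuous $f\ge 0$ with $\int f\,d\mu<0$. Letting $t\to+\infty$ in the display above then gives the claim, since the first term is a fixed finite number because $\mu$ has finite total variation. To construct $f$, take the Jordan decomposition $\mu=\mu^+-\mu^-$ with $\mu^-\neq 0$, and choose a Hahn set $N$ with $\mu^-(N)=\mu^-(S\times S)>0$ and $\mu^+(N)=0$.

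Since $S\times S$ is a compact metric space, finite Borel measures are regular. Choose a compact set $C\subset N$ and an open set $U\supset C$ with $\mu^-(C)>\tfrac12\mu^-(S\times S)$ and $|\mu|(U\setminus C)$ as small as desired, say less than $\tfrac14\mu^-(S\times S)$. Urysohn's lemma then gives $f\in\C$ with $0\le f\le 1$, $f=1$ on $C$ and $f=0$ off $U$. For this $f$,
\begin{align*}
\int f\,d\mu\le -\mu^-(C)+|\mu|(U\setminus C)<0 ,
\end{align*}
using that $\mu^+(C)=0$.

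The main obstacle is this approximation step, and it is where compactness and the Polish assumption enter, through regularity of Borel measures and the availability of Urysohn functions. An alternative is to note that if $\int f\,d\mu\ge 0$ held for all non-negative $f\in\C$, then the Riesz representation theorem would make $\mu$ a positive measure. This gives the separating $f$ by contraposition and avoids the explicit construction.
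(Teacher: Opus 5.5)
Your proof is correct and follows the paper's argument. The paper likewise takes a non-negative $g\in\C$ with $\int g\,d\mu<0$, notes that $g_n=ng+\sup_{r}\ell(r,G)$ lies in $B$, and lets $n\to\infty$ (using the constant $\sup_r\ell(r,G)$ where you use $M+1$); the only difference is that you actually justify the existence of such a $g$, via Hahn decomposition, regularity and Urysohn or via Riesz, whereas the paper simply asserts it.
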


\begin{proof}
    Suppose that $\mu$ is not a positive measure. Then, there exists $g \in \mathcal{C}$ such that $g\geq 0$ and $\int g d\mu < 0$. Define then $g_n = ng + \sup_{r \in S} \ell(r,G)$ and note that $g_n \in B$ for every $n$ and 
\begin{align*}
\lim_{n \rightarrow + \infty} \int_S\int_S g_n(s,r) d\mu &  =  \lim_{n \rightarrow + \infty} \int_S\int_S ng + \sup_{r \in S} \ell(r,G) d\mu \\
& =  \lim_{n \rightarrow + \infty} n \int_S\int_S gd\mu + \sup_{r \in S} \ell(r,G) \mu(S) = -\infty
\end{align*}
as we wanted to prove.
\end{proof}
We are now ready to prove Theorem \ref{thm:w-padro-strong-duality}.

\paragraph{Proof of Theorem \ref{thm:w-padro-strong-duality}}
\begin{proof}

    Note first that due to the definition of $B$, $D(g)$ and $\Theta$ given above it holds that 
    \begin{align}\label{eq:initialest}
        J \leq \inf_{\substack{g \in \mathcal{C} \\ (\lambda, h) \in D(g)}} \bigg\{ & J(\lambda, h):  \int_{S\times S} h(s,r) + \lambda c(s,r)\ d\pi_{S|S=s}(r)d\mu^*(s) \nonumber \\
        &\geq \int_{S \times S} \ell(r,G)\ d\pi_{S|S=s}(r)d\mu^*(s),\ \forall \pi_{S|S}: S \rightarrow K_s \bigg\} =  \inf_{g \in  B} \Theta(g).
    \end{align}

    Next, we aim to identify the conjugate functional $\Theta^*$.
    To evaluate $\Theta^*$, note that for every $\mu \in M_+(S\times S)$
    \begin{align}\label{eq:chai}
        \Theta^*(\mu) &= \sup_{g \in \mathcal{C}} \bigg( \iint g d\mu - \Theta(g) \bigg) \nonumber\\
        &= \sup_{\substack{g \in \mathcal{C}}} \bigg(\iint g d\mu - \inf_{(\lambda, h) \in D(g)} \bigg(\lambda\varepsilon + \sup_{\pi_{S|S} : S \rightarrow K_s} \iint h\, 
        d\pi_{S|S=s} d\mu^* \bigg)\bigg)  \nonumber\\
        & = \sup_{\substack{g \in \mathcal{C}  \nonumber\\ (\lambda, h) \in D(g)}}
        \bigg(\iint g d\mu - \bigg( \lambda\varepsilon + \sup_{\pi_{S|S} : S \rightarrow K_s} \iint g - \lambda c, 
        d\pi_{S|S=s} d\mu^* \bigg) \bigg) \nonumber\\
        & \geq \sup_{\substack{g \in \mathcal{C}  \nonumber\\ (\lambda, h) \in D(g)}}
        \bigg(\iint g d\mu - \bigg( \lambda\varepsilon + \sup_{\pi_{S|S} : S \rightarrow K_s} \iint g d\pi_{S|S=s} d\mu^* \\
        &\qquad\qquad\qquad\qquad\qquad\qquad\qquad\qquad\qquad+ \sup_{\pi_{S|S} : S \rightarrow K_s} \iint - \lambda c d\pi_{S|S} d\mu^* \bigg)\bigg)\nonumber  \\
        & = \sup_{\substack{g \in \mathcal{C} \\ (\lambda, h) \in D(g)}} \bigg( \iint g d\mu - \sup_{\pi_{S|S} : S \rightarrow K_s} \iint g 
        d\pi_{S|S=s} d\mu^* -\lambda \varepsilon \\
        &\qquad\qquad\qquad\qquad\qquad\qquad\qquad\qquad\qquad+ \inf_{\pi_{S|S} : S \rightarrow K_s} \iint  \lambda c d\pi_{S|S=s} d\mu^* \bigg) \nonumber
        \end{align}
        Suppose that $\mu(S\times S) \neq 1$. Define $g_n = (\mu(S \times S) - 1) n \in \mathcal{C}$ for every $n$. Then 
        \begin{align}
            \lim_{n \rightarrow + \infty} \iint g_n d\mu - \sup_{\pi_{S|S} : S \rightarrow K_s} \iint g_n 
        d\pi_{S|S=s} d\mu^* = \lim_{n \rightarrow + \infty}n(\mu(S \times S) - 1)^2 = +\infty 
        \end{align}
implying that $\Theta^*(\mu) = \infty$ in case $\mu \notin P(S\times S)$. Suppose now then $\mu \in P(S\times S)$. Consider the set $\Omega_K = \{\mu^* \otimes \pi_{S|S} : \pi_{S|S}: S\rightarrow K_s\}$ and note that this is convex and weak* closed by Lemma \ref{lem:close}. Suppose that $\mu \notin \Omega_K$. Then, by Hahn-Banach separation theorem it holds that there exists $g \in \mathcal{C}$ such that
\begin{align}\label{eq:separation}
    \int g d\mu > 0 \quad \text{and} \quad \int g   d\nu \leq 0 \ \ \ \forall \nu \in \Omega_K.
\end{align}
Define $g_n(s,r) = ng(s,r)$. Then using \eqref{eq:separation} it holds that 
\begin{align*}
   \lim_n \iint g_n d\mu - \sup_{\pi_{S|S} : S \rightarrow K_s} \iint g_n 
        d\pi_{S|S=s} d\mu^*  \geq \lim_n n\int g(s,r) d\mu(s,r)  = + \infty.
\end{align*}
Therefore, using the previous limit in combination with \eqref{eq:chai} we obtain that $\Theta^*(\mu) = +\infty$ whenever $\mu \notin \Omega_K$.
Let now $\mu = \mu^* \otimes \pi_{S|S} \in \Omega_K$ Then 
\begin{align*}
    \Theta^*(\mu) & = \sup_{\substack{g \in \mathcal{C}}} \bigg(\iint g d\mu - \inf_{(\lambda, h) \in D(g)} \bigg(\lambda\varepsilon + \sup_{\pi_{S|S} : S \rightarrow K_s} \iint h\, 
        d\pi_{S|S=s} d\mu^* \bigg)\bigg) \\
        & = \sup_{\substack{g \in \mathcal{C} \\ (\lambda,h) \in D(g)} } \bigg(\iint g d\pi_{S|S=s} d\mu^* -\lambda \varepsilon - \sup_{\pi_{S|S} : S \rightarrow K_s} \iint h\, 
        d\pi_{S|S=s} d\mu^* \bigg)\\
        & =  \sup_{\substack{g\in\mathcal{C} \\ (\lambda,h) \in D(g)} } \bigg(\iint h + \lambda c d\pi_{S|S=s} d\mu^* -\lambda \varepsilon - \sup_{\pi_{S|S} : S \rightarrow K_s} \iint h\, 
        d\pi_{S|S=s} d\mu^* \bigg)\\
        & =  \sup_{\substack{g \in \mathcal{C} \\ (\lambda,h) \in D(g)} } \bigg(\lambda \bigg( \iint c\ d\mu - \varepsilon \bigg) + \iint h d\pi_{S|S=s} d\mu^* - \sup_{\pi_{S|S} : S \rightarrow K_s} \iint h\, 
        d\pi_{S|S=s} d\mu^* \bigg)\\
        &= \begin{cases}
            0 &\text{ if } \displaystyle \iint c\ d\pi_{S|S=s} d\mu^* \leq \varepsilon,  \\
            \infty &\text{ else.}
        \end{cases}
\end{align*}
So in particular for $\mu \in M_+(S\times S)$ then 
\begin{align*}
    \Theta^*(\mu) = \begin{cases}
            0 &\text{ if } \mu = \pi_{S|S} \otimes \mu^* \text{ for }  \pi_{S|S} : S \rightarrow K_s \text{ and }  \displaystyle \iint c\ d\pi_{S|S=s} d\mu^* \leq \varepsilon,  \\
            \infty &\text{ else.}
        \end{cases}
\end{align*}

By Fenchel duality (see e.g. \cite[Theorem 1, Page 201]{luenberger_optimization_1997}), thanks to Lemma \ref{lemma:fenchel-requirements}, it holds that 
\begin{align*}
    \inf_{g\in B} \Theta(g) = \sup_{\mu \in \mathcal{C}^*} - \Theta^*(\mu) + \inf_{g \in B} \int g \, d\mu.
\end{align*}
Now note that if $\mu = \mu^* \otimes \mu_{S|S}$ and $\ell$ is continuous it holds that
    \begin{align*}
        \inf_{g \in B} \iint g\ d\mu_{S|S=s} d\mu^* = \iint \ell\ d\mu_{S|S=s} d\mu^*.
    \end{align*}
Therefore by applying the previous computations and Lemma \ref{lemma:B-conjugate} we deduce that 

\begin{align*}
    \inf_{g\in B} \Theta(g) = \sup_{\mu_{S|S} \in \Okc} \iint \ell\ d\mu_{S|S=s}(r) d\mu^*(s) = I_{\Okc}(G)
\end{align*}
as we wanted to prove.
 Therefore from \eqref{eq:initialest} it follows that 
    \begin{align*}
    J \leq I.
    \end{align*}
Due to weak duality proven in Proposition \ref{prop:weakduality} we had $J \geq I$, therefore $J=I$ and we have strong duality.
\end{proof}


\subsection{Proof of Theorem \ref{th:k=p(s)}}\label{supp:p(s)-duality-proof}
\begin{proof}
Define
\begin{align}
\phifix (x,y) &:= \sup_{(u,v) \in S} (\ell(u,v,G) - \lambda c((x,y),(u,v))).
\end{align}
    Writing $s=(x,y)$ and $r=(u,v)$, observe that
    \begin{align*}
        \{(\lambda, h): \lambda \in \R_+, h(x,y,u,v) = \phifix(x,y)\} \subseteq \D
    \end{align*}
    and therefore 
    \begin{align*}
        \inf_{(\lambda, h) \in \D} \lambda \varepsilon + \sup_{\pi_{S|S} : S \rightarrow K_s} \int_{S \times S} h(s,r) d\pi_{S|S=s}(r)d\mu^*(s) \leq \inf_{\lambda \geq 0} \lambda \varepsilon +  \int_{S} \phifix(s) d\mu^*(s).
    \end{align*}
    For the reverse inequality, note that for every $(\lambda, h) \in \D$ we have
    \begin{align*}
        \int_{S\times S} h(s,r)\ d\pi_{S|S=s}(r)d\mu^*(s) \geq \int_{S\times S} \ell(r,G)-\lambda c(s,r)\ d\pi_{S|S=s}(r)d\mu^*(s).
    \end{align*}
    Hence, noticing that for the choice of $K_s = P(S)$ it holds that 
    \begin{align*}
    \{ \delta_{\varphi(s)} \otimes \mu^* : \varphi :S \rightarrow S \text{ measurable}\} \subset \{\pi_{S|S} : S \rightarrow K_s\}
    \end{align*}
    we have that
    \begin{align*}
        &\inf_{(\lambda, h) \in \D} \lambda \varepsilon + \sup_{\pi_{S|S} : S \rightarrow K_s} \int_{S \times S} h(s,r) d\pi_{S|S=s}(r) d\mu^*(s)  \\
        &\qquad\geq \inf_{\lambda \geq 0} \lambda \varepsilon +  \sup_{\varphi : S\rightarrow S} \int_S  \ell(\varphi(s),G) - \lambda c(s,\varphi(s))d\mu^*(s) \\
        &\qquad = \inf_{\lambda \geq 0} \lambda \varepsilon +  \int_S \sup_{\varphi : S\rightarrow S} \ell( \varphi(s),G) - \lambda c(s, \varphi(s)) d\mu^*(s)  \\
        &\qquad= \inf_{\lambda \geq 0} \lambda \varepsilon + \int_S \phifix (x,y) d\mu^*(x,y).
    \end{align*}
    as we wanted to prove,
    where in the first equality we used \cite[Theorem 3A]{rockafellar_integral_1976}.
\end{proof}

\subsection{Proof of Theorem \ref{th:k=x}}\label{supp:p(x)-duality-proof}
\begin{proof}
Define 
\begin{align*}
\phifix (x,y) := \sup_{u \in X} \left\{\int_Y \ell (u, v, G) - \lambda c((x,y),(u,v))\ d\mu_{Y|X=u}^*(v) \right\}.
\end{align*}
Since $K_{(x,y)} = K = \{\mu_X \otimes \mu^*_{Y|X}: \mu_X \in P(X)\}$ we have that for every $\pi_{S|S} : S \rightarrow K_s$ 
\begin{align*}
    \int_S \phifix (x,y) \, & d\pi_{S|S=s}(x,y) \,d\mu^*(x,y)  = \int_S \phifix (x,y)\,d\mu^*(x,y) \\
    & = \int_S \sup_{u \in X} \left\{\int_Y \ell (u, v, G) - \lambda c((x,y),(u,v))\ d\mu_{Y|X=u}^*(v) \right\} \,d\mu^*(x,y)\\
    & \geq \int_S \int_X\int_Y \ell (u, v, G) - \lambda c((x,y),(u,v))\ d\mu_{Y|X=u}^*(v)  d\mu_X(u)\,d\mu^*(x,y)\\
    & = \int_S  \ell (u, v, G) - \lambda c((x,y),(u,v)) d\pi_{S|S=s}(x,y) \,d\mu^*(x,y)
\end{align*}
so that 
\begin{align*}
        \left\{(\lambda, h): h(x,y,u,v) = \phifix(x,y) \right\} \subseteq \D.
    \end{align*}
    Therefore
    \begin{align}
        \inf_{(\lambda, h) \in \D} &\lambda \varepsilon + \sup_{\pi_{S|S} : S \rightarrow K_s} \int_{S \times S}  h(s,r) \ d\pi_{S|S=s}(r)d\mu^*(s) \nonumber \\
        &\leq  \inf_{(\lambda, h) \in \D} \lambda \varepsilon +  \int_S  \phifix (x,y) d\mu^*(x,y).  \label{eq:kx-lhs}
    \end{align}
    For the opposite inequality, for all $(\lambda, h) \in \D$ we have
    \begin{align*}
        \int h(s,r)\ d\pi_{S|S=s}(r) d\mu^*(s) \geq \int \ell(r,G) - \lambda c(s,r) \,  d\pi_{S|S=s}(r) d\mu^*(s), \quad \forall \pi_{S|S} : S \rightarrow K_s.
    \end{align*}
    Hence noticing that 
    \begin{align*}
        \{\delta_{\varphi(x,y)} \otimes \mu^*_{Y|X}   : \varphi: S \rightarrow X\} \subset \{\pi_{S|S} : S \rightarrow K_s\}
    \end{align*}
   it holds that 
   \begin{align*}
        &\inf_{(\lambda, h) \in \D} \lambda \varepsilon + \sup_{\pi_{S|S} : S \rightarrow K_s} \int_{S \times S} h(s,r) d\pi_{S|S=s} (r)d\mu^*(s)  \\
        &\qquad\geq \inf_{\lambda \geq 0} \lambda \varepsilon +  \sup_{\mu_X \in P(X)} \int_S  \int_S \ell(u,v,G) - \lambda c((x,y),(u,v)) d \mu_{Y|X=u}(v) d\mu_X(u) d\mu^*(x,y) \\
        &\qquad\geq \inf_{\lambda \geq 0} \lambda \varepsilon +  \sup_{\varphi : S \rightarrow X} \int_S  \int_Y \ell(u,v,G) - \lambda c((x,y),(\varphi(x,y),v)) d \mu_{Y|X=\varphi(x,y)}(v) d\mu^*(x,y)\\
        &\qquad = \lambda \varepsilon +   \sup_{\varphi : S \rightarrow X} \int_Y \ell(u,v,G) - \lambda c((x,y),(\varphi(x,y),v)) d \mu_{Y|X=\varphi(x,y)}(v) d\mu^*(x,y)  \\
        &\qquad= \lambda \varepsilon +  \int_S  \sup_{u \in X} \int_Y \ell(u,v,G) - \lambda c((x,y),(u,v)) d \mu_{Y|X=u}(v) d\mu^*(x,y) 
    \end{align*}
    as we wanted to prove, where in the first equality we used \cite[Theorem 3A]{rockafellar_integral_1976}.
\end{proof}
\section{Proof of Corollary \ref{cor:w-cond}}\label{supp:w-cond}
\label{sec:app-cond-wass}
\begin{proof}
    We first show (i). By definition of the cost $c_{Y|X}$, any transport plan $\pi$ with finite cost must be supported on the set $\{(x,y,u,v) : x = u\}$. Thus its marginals on $X$ must coincide, which implies $\mu_X^* = \mu_X$. Conversely, if $\mu_X^* = \mu_X$ and since $X,Y$ are compact admissible plans with finite cost exist.
    Assume in the rest of the proof that $\mu_X^* = \mu_X$.  

    We now prove the inequality 
    \begin{align*}
         W_\text{cond}(\mu^*, \mu) \geq W_{c_{Y|X}}(\mu^*, \mu).
    \end{align*}
    In the following, we will write  $\pi_x$ to mean $\pi_{(Y\times Y)|X=x}$ and $\delta_x$ to mean $\delta_{X|X=x}$  for readability. For each $x \in X$, let $\pi_x$ be an optimal transport plan between $\mu_{Y|X=x}^*$ and $\mu_{Y|X=x}$. Define a coupling $\pi$ on $(X\times Y)^2$ by $\pi(x,y,u,v):=\mu_X^*(x)\otimes [\pi_x(y, v)\delta_x(u)]$. By construction, $\pi$ has marginals $\mu^*$ and $ \mu$ and is thus admissible. Using the definition of $c_{Y|X}$, we obtain
    \begin{align*}
        \int_{(X\times Y)^2} c_{Y|X}((x,y),(u,v))\ d\pi((x,y),(u,v)) &= \int_X \int_{Y \times Y} c_Y(y,v)\ d\pi_x(y,v)\ d\mu_X^*(x) \\
        &= W_\text{cond}(\mu^*, \mu)
    \end{align*}
    where $c_Y(y,v):= \|y-v\|^2$ is the cost function only on $Y$. Taking the infimum over all admissible plans yields the desired inequality.  

    For the reverse inequality, let $\pi$ be an optimal transport plan for $W_{c_{Y|X}}$. Since finite cost requires $x=u$, the plan $\pi$ is concentrated on $\{x=u\}$. Disintegrating $\pi$ with respect to $x$, we may write $\pi(x,y,u,v) =  \mu_X^*(x)\otimes[\pi_x(y,v)\delta_x(u)]$, where for $\mu_X^*$-a.e. $x$, $\pi_x$ is a coupling of $\mu^*_{Y|X=x}$ and $\mu_{Y|X=x}$. Then,

    \begin{align*}
        \int_{(X\times Y)^2} c_{Y|X}((x,y),(u,v))\ d\pi((x,y),(u,v)) &= \int_X \int_{Y \times Y} c_Y(y,v)\ d\pi_x(y,v)\ d\mu_X^*(x) \\
        &\geq W_\text{cond}(\mu^*, \mu)
    \end{align*}
    thus proving the claim.

\end{proof}

\end{document}